\theoremstyle{plain}
\newtheorem{theorem}{Theorem}[section]
\newtheorem{corollary}[theorem]{Corollary}
\newtheorem{lemma}[theorem]{Lemma}
\newtheorem{proposition}[theorem]{Proposition}
\theoremstyle{definition}
\newtheorem{remark}[theorem]{Remark}
\numberwithin{equation}{section}
\newcommand\pref[1]{~(\ref{#1})}
\newcommand\op[1]{#1}
\newcommand\cA{\mathcal A}
\newcommand\cF{\mathcal F}
\newcommand\cG{\mathcal G}
\newcommand\cS{\mathcal S}
\newcommand\cD{\mathcal D}
\newcommand\C{\mathbb C}
\newcommand\R{\mathbb R}
\newcommand\N{\mathbb N}
\newcommand\RE {\text{\rm Re}\,}
\newcommand\IM {\text{\rm Im}\,}
\newcommand\supp {\text{\rm supp\,}}
\newcommand\al{\alpha}
\newcommand\eps{\varepsilon}
\newcommand\la{\lambda}
\newcommand\vp{\varphi}
\newcommand\dimH{n}
\newcommand\Hn{{H_\dimH}}
\newcommand\Un {{\text{U}(\dimH)}}  
\def\algop{{\mathbb D}_{\text{\rm rad}}}
\def\crad{\cD_{\text{\rm rad}}}
\newcommand\fan{{\Sigma^*}}  
\newcommand\cfan{\Sigma}   
\newcommand\schwkrad{{\mathcal S}_{\text{\rm rad}}(\Hn)}
\newcommand\schwkradp{{\mathcal S}'_{\text{\rm rad}}(\Hn)}
\newcommand\Lunorad{L^1_{\text{\rm rad}}(\Hn)}
\newcommand\Lduerad{L^2_{\text{\rm rad}}(\Hn)}
\newcommand\dualH[2]{\langle #1,#2 \rangle_{\Hn}} 
\newcommand\bigdualH[2]{\left\langle #1,#2 \right\rangle_{\Hn}} 
\newcommand\dualR[2]{\langle #1,#2 \rangle_{\R^2}} 
\newcommand\gel{\cG}  
\newcommand\set[1]{\left\{#1\right\}}
\newcommand \inv{^{-1}}
\newcommand\haar{m}
\newcommand\suppfan{\rho}
\newcommand\iper{ {_{1} F}_{\!1}}
\def\inutile#1{{}}
\begin{document}

\title[Paley--Wiener Theorems on the Heisenberg group]
{Paley--Wiener Theorems \\for the $\Un$--spherical transform \\on the Heisenberg group}

\author[F. Astengo, B. Di Blasio, F. Ricci]
{Francesca Astengo, Bianca Di Blasio, Fulvio Ricci}

\address
{Dipartimento di Matematica\\
Via Dodecaneso 35\\
16146 Genova\\ Italy} \email{astengo@dima.unige.it}

\address
{Dipartimento di Matematica e Applicazioni\\
Via Cozzi 53\\
  20125 Milano\\ Italy}
\email{bianca.diblasio@unimib.it}

\address
{Scuola Normale Superiore\\
Piazza dei Cavalieri 7\\
56126 Pisa\\ Italy}
\email{fricci@sns.it}

\thanks{Work partially supported
by  MIUR (project ``Analisi armonica").
}

\subjclass[2000]{Primary: 43A80  
; Secondary:  22E25              
}                         

\keywords{Fourier trasform, Schwartz space, Paley--Wiener Theorems, Heisenberg group}

\begin{abstract}
 We prove several Paley--Wiener-type theorems related to the spherical transform
on the Gelfand pair $\big(\Hn\rtimes\Un,\Un\big)$, where  $\Hn$ is the 
$2\dimH+1$-dimensional Heisenberg group. 

Adopting the standard realization of the Gelfand spectrum as the Heisenberg fan in $\R^2$, 
we prove that spherical transforms of $\Un$--invariant functions and distributions 
with compact support in $\Hn$ admit unique entire extensions to $\C^2$, 
and we find  real-variable characterizations of such transforms.
Next, we characterize the inverse spherical transforms of compactly supported functions 
and distributions on the fan, giving analogous characterizations. 
\end{abstract}

\maketitle

\section{Introduction}

The spherical transform for the Gelfand pair $\big(\Hn\rtimes\Un,\Un\big)$ maps $\Un$--invariant
functions, i.e. radial functions,
on the Heisenberg group $\Hn$ to functions on the Heisenberg fan $\Sigma$, 
which is naturally realized as a closed subset of~$\R^2$, the {\it Heisenberg fan} defined in \eqref{cfan}. 
In \cite{ADR,ADR1} we have studied the image of the space $\schwkrad$ of radial Schwartz functions, 
showing that it consists of the restrictions to $\Sigma$ of Schwartz functions on $\R^2$.

In this paper we first use this result to extend the notion of spherical transform 
to tempered radial distributions, identifying such transforms 
as the distributions on $\R^2$ which are ``synthetizable''   on $\Sigma$, i.e., 
vanish on functions which are  identically zero on the fan.
Then we prove Paley--Wiener type theorems
for the spherical transform $\gel $ and its inverse.

The natural starting point   for establishing Paley-Wiener theorems for $\cG$ is the fact that, when $f$ has compact support, its spherical transform $\cG f$ can be extended
from the set of bounded spherical functions (the Gelfand spectrum) to the set of all spherical 
functions. Spherical functions are parametrized by the pairs $(\xi,\lambda)\in\C^2$ of their 
eigenvalues with respect to the two fundamental differential operators, $L$ (the sublaplacian) and 
$i\inv T$ (the central derivative).    Moreover, the
spherical function $\Phi_{\xi,\lambda}$ with eigenvalues $(\xi,\lambda)\in\C^2$ depends holomorphically on $(\xi,\lambda)$.
This allows to extend the  spherical transform of a function or distribution
with compact support to an entire function on $\C^2$.

Symmetrically, each spherical function $\Phi_{\xi,\lambda}$ extends to an entire function 
on the complexification  $ H_n^\C$ of $\Hn$, and  the inversion formula shows that 
if   $\cG f$  has compact support in the Gelfand spectrum, then 
the function itself extends to an entire function   on $H_n^\C$.

It does not look plausible to have a simple ``complex variable'' description of the entire functions which are in the range of the spherical, or inverse spherical, transform of the space of $C^\infty$-functions, or of distributions, with compact support, see also the comments in Fuhr~\cite{MN}, in a context that is  closely related to ours.

We rather look for analogues of the ``real variable'' characterization of the classical Paley-Wiener spaces in $\R^n$, in the spirit of the works of Bang~\cite{Bang} 
and Tuan~\cite{Tuan}, later expanded and refined by Andersen and deJeu~\cite{Nils}. 
We take the following as our model statement~\cite{Nils}:
 a function $f$ on $\R^n$ is the Fourier transform of a $C^\infty$ function with compact support if and only if it is a Schwartz function and, for some $p\in[1,\infty]$,
\begin{equation}\label{Delta}
 \limsup_{k\to\infty}\|\Delta^kf\|_p^\frac1k<\infty\ .
 \end{equation}
 
 In this case, the left-hand side is finite for every $p$, the ``$\limsup$'' is a limit and, for every $p\in[1,\infty]$,
 $$
 \lim_{k\to\infty}\|\Delta^kf\|_p^\frac1k=\max_{x\in{\rm supp}\cF^{-1} f}|x|^2\ .
 $$ 

When restricted to radial functions, this theorem can be reformulated in terms of the spherical transform $\cG$ for the Gelfand pair $(\R^n\rtimes{\rm SO}_n,{\rm SO}_n)$, given by $\cG f(\la)=\hat f(\xi)$ with $|\xi|^2=\la\ge0$. Then we have    two different statements, depending on the side of the Fourier transform it is applied on:
\begin{enumerate}
\item[(i)] a function $g$ on the Gelfand spectrum $[0,+\infty)$ is the spherical transform of a radial $C^\infty$ function on $\R^n$ with compact support if and only if it is a Schwartz function and, for some $p\in[1,\infty]$,
$$
 \limsup_{k\to\infty}\|g^{(k)}\|_p^\frac1k<\infty\ ;
 $$
  in this case, for every $p\in[1,\infty]$, 
$$
 \lim_{k\to\infty}\|g^{(k)}\|_p^\frac1k=\max_{x\in\supp \cG\inv g}|x|^2\ .
 $$
\item[(ii)] a radial function $f$ on $\R^n$ is the inverse spherical transform of a $C^\infty$ function with compact support in $[0,+\infty)$ if and only if it is a Schwartz function and \eqref{Delta} holds for some $p\in[1,\infty]$; in this case, for every $p\in[1,\infty]$,
 $$
 \lim_{k\to\infty}\|\Delta^kf\|_p^\frac1k=\max_{\la\in{\rm supp}\cG f}\la\ .
 $$
 \end{enumerate}
 
 We regard (i) as a Paley-Wiener theorem for the (direct) spherical transform, and (ii) as a Paley-Wiener theorem for the inverse spherical transform.
 
Possible  analogues of (i) and (ii) for the pair $\big(\Hn\rtimes\Un,\Un\big)$ rely on the identification, for each direction, of  a differential operator on one side of the spherical transforms and the corresponding ``norm'' on the other side. 

Our results are related to the following choices:
\begin{enumerate}
\item[(i')]  the difference/differential  operators $M_\pm$ of Benson, Jenkins and Ratcliff \cite{BJR} on $\Sigma$ and the Kor\'anyi norm   \eqref{norm} on $H_n$;
\item[(ii')] the sublaplacian on $H_n$ and its eigenvalue $\xi$ on~$\Sigma$.
\end{enumerate}

We first prove real Paley-Wiener theorems for the direct spherical transform, i.e. analogues of (i) with the ingredients in (i'). We
 treat the cases of $C^\infty$ and $L^2$ functions and of  tempered distributions. These characterizations are summarized in Therorem~\ref{unoequiv},  Corollary~\ref{L2} and Theorem~\ref{sch}.   
 
 We also remark that 
the (unique) entire extension of the transform of  a function in ${\mathcal D}_{\text{\rm rad}}(\Hn)$
needs not be Schwartz on $\R^2$. This shows that, in general, the Schwartz extensions 
to $\R^2$ constructed in \cite{ADR1} are  different from  the entire extension discussed here.

In the second part of the paper, we show that, given a  distribution~$U$ on $\R^2$ with compact support , the inversion formula for the spherical transform produces a function on the Heisenberg group $\Hn\simeq \R^{2\dimH+1}$ which can be analytically extended
 to $\C^{2\dimH+1}$. 
 If the distribution $U$ is synthetizable on $\Sigma$, the function so obtained on $H_n$ coincides with its inverse spherical transform. For such distributions $U$, we obtain a real Paley-Wiener analogue of (ii) with the ingredients in~(ii').  A similar theorem is also proved for functions on $\Sigma$ which are either restrictions of $C^\infty$ functions or are square integrable with respect to the Plancherel measure.
 Our characterizations are summarized in Therorem~\ref{maininv}, Theorem~\ref{L2inv} and Theorem~\ref{schinv}.
 These results can be interpreted as a ``real''  spectral Paley-Wiener theorems 
 for the spectral measure of the sublaplacian, a point of view which coincides  with that of \cite{MN}.

There is a wide literature on Paley--Wiener theorems on the Heisenberg group. 
The earliest result is due to Ando~\cite{PJapAc},
followed by
 Thangavelu~\cite{Revista,JFA,HirMJ}, 
Arnal and Ludwig \cite{PAMS},
 Narayanan and Thangavelu \cite{AnnIFour}.
  Results are mostly  related to the group (operator-valued) Fourier transform 
 and its inverse, but there are also ``spectral'' Paley--Wiener theorems,
  as in the already mentioned paper \cite{MN}, where the condition of compact support 
  on the transform of a given function is replaced 
 by the condition that the function itself belongs to the image of the 
 spectral measure of a compact set in $\R^+$ 
 associated to the sublaplacian (see also Strichartz \cite{JFA-Laplacians},
  Bray \cite{Monats.M},
 and Dann and \'Olafsson~\cite{Olaf} in other contexts).
  
Our paper is organized as follows. In Section~2 we introduce the basic notation. In Section~3 we treat spherical functions
noting that they can be extended to holomorphic functions in each variable
and providing some easy estimates. Section~4 and Section~5 deal 
with the spherical transform of radial functions and radial tempered distributions, respectively. 
In Section 6 we prove some properties of  the operators $M_\pm$
first introduced in \cite{BJR}.
These are exploited in Sections 7 and 8 to obtain real Paley--Wiener theorems
for the spherical transform and its inverse, respectively.

\section{Notation}
We denote by $\Hn$ the Heisenberg group, i.e.,
the real manifold $\C^\dimH\times\R$ equipped with the group law
$$
(z,t)(w,u)=\bigl(z+w,t+u+\tfrac12\,\IM \langle w | z \rangle \bigr)
\qquad\forall z,w\in \C^\dimH,\quad
t,u\in \R,
$$
where $\langle\cdot|\cdot\rangle$ denotes the Hermitian innner product in $\C^\dimH$.

It is easy to check that the Lebesgue measure $dm=dz\,dt$ is a Haar measure on $\Hn$.
\medskip

We denote by $T$, $Z_j$ and $\bar{ Z_j}$, where $j=1,\ldots,\dimH$,
the left-invariant vector fields
$$
Z_j=\partial_{z_j}-\tfrac{i}{4}\, \bar{z}_j\,\partial_t
\qquad
\bar{Z}_j=\partial_{\bar{z}_j}+\tfrac{i}{4}\, z_j\,\partial_t ,
\qquad T=\partial_t\ .
$$
The only nontrivial brackets are $T=-2i\,[Z_j,\bar Z_j]$.

The operators $Z_j$ and $\bar Z_j$ are homogeneous of degree $1$
while $T$ is homogeneous of degree~$2$
with respect to the anisotropic dilations
$r\cdot (z,t)=(rz,r^2t)$, where $r>0$ and $(z,t)\in \Hn$.
Let $I=(i_1,\ldots,i_{2\dimH+1})$ be in $\N^{2\dimH+1}$;
we denote by ${\op D}^I$ a
differential operator of homogeneous degree 
$\text{deg}\,I=i_1+\cdots+i_{2\dimH}+2i_{2\dimH+1}$ of the form
\begin{equation}
\label{monomi}
{\op D}^I=Z_{1}^{i_1} \bar Z_1^{i_2}
\cdots Z_{\dimH}^{i_{2\dimH-1}} \bar Z_n^{i_{2\dimH}} T^{i_{2\dimH+1}} .
\end{equation}
The monomials ${\op D}^I$ with  $\text{deg}\,I=j$
form a basis of the space of all left-invariant differential operators on $\Hn$ 
which are homogeneous of degree
$j$.

We write $\cS (\Hn)$ for the Schwartz space of
functions on $\Hn$, i.e., the space of infinitely differentiable
functions~$f$ on~$\Hn$ such that
all  partial derivatives~${\op D}^I f$ of~$f$
are rapidly decreasing.
The Schwartz space is equipped with the following
family of norms, parametrized by a nonnegative integer $p$:
$$
\|f\|_{(p)}=\sup_{(z,t)\in \Hn}\{(1+| (z,t)|)^{p}\, |{\op D}^I f(z,t)|
\,:\, \text{deg}\, I\leq p\}\ ,
$$
where 
\begin{equation}\label{norm}
| (z,t)|=\left( \frac{|z|^4}{16}+t^2\right)^{1/4} .
\end{equation}
We also define $\cA$ as    
$$
\mathcal A(z,t)= \frac{|z|^2}{4}+it 
\qquad \forall (z,t)\in \Hn,
$$ 
so that 
$
|A(z,t)|=| (z,t)|^2.
$

\section{Spherical functions}
The unitary group $\Un$  
acts on $\Hn$ via
$$
k\cdot (z,t)=(kz, t) \qquad \forall (z,t)\in \Hn,\quad
k\in \Un.
$$ 
This action induces an action on functions~$f$ on $\Hn$ by the formula
$$
k\cdot f(z,t)=f(k^{-1}z,t)\qquad \forall k\in \Un,\quad (z,t)\in \Hn.
$$
We note that a function $f$ on $\Hn$ is $\Un$--invariant if and only if
it depends
only on $|z|$ and~$t$
, therefore we shall call it radial.
We denote by $\schwkrad$ the space
 of  radial
 Schwartz  functions.
  
  Denote by $G$ the   semidirect product $\Hn\rtimes \Un$.
    We may identify the space of smooth bi-$\Un$--invariant functions  $\cD(G/\!/\Un)$ 
with the   algebra  $\crad(\Hn)$ 
of smooth radial  functions on $\Hn$ with compact support. It is known~\cite{HR,DR} that 
$(G,\Un)$ is a Gelfand pair, i.e.,
$\crad(\Hn)$  is a commutative algebra.  
We may also identify the commutative 
algebra $\mathbb D(G/\Un)$ of $G$--invariant differential operators on 
$G/\Un$   with the algebra
$\algop$ of all left-invariant and
$\Un$--invariant differential operators on $\Hn$, which    has
two essentially self-adjoint 
generators, namely $i^{-1}T$
and the 
  sublaplacian
$$
L=-2\sum_{j=1}^\dimH\bigl( Z_j\bar Z_j+\bar Z_j Z_j\bigr) .
$$

The   spherical 
functions are characterized as the  
joint eigenfunctions
of all 
$G$--invariant differential operators on 
$G/\Un$, i.e., as the radial eigenfunctions of $i^{-1}T$
and $L$,  normalized to take value $1$ at identity.
Spherical functions are analytic and are uniquely determined by the pair  of their eigenvalues relative to 
$L$ and $i^{-1}T$ respectively. 

The next subsection  shows that the spherical function 
$\Phi_{\xi,\la}$ exists for every pair $(\xi,\la)$ of 
eigenvalues  and that  depends  holomorphically
on variables and parameters.

\subsection{Holomorphy of spherical functions}

We initially consider real eigenvalues $\xi$ and $\la$ and look for a radial solution of the system
\begin{equation}\label{sistema}
\left\{
\begin{array}{l}
Lu=\xi\, u
\\
Tu=i\la\, u
\\
u(0,0)=1\ .
\end{array}
\right.
\end{equation}
Following~\cite{Kora}, for $\la\neq 0$  we write the solution in the form 
$$
u(z,t)=e^{i\la t}\,e^{-\la |z|^2/4}\,v(\la |z|^2/2),
$$
obtaining that $v$ satisfies
the confluent hypergeometric  differential equation
\begin{equation}\label{eqdiffconfl}
s\,v''(s)+(c-s)v'(s)-a\,v(s)=0
\end{equation}
with parameters $a=\frac{\dimH}{2}-\frac{\xi}{2\la}$, $c=\dimH$.
The normalized solution of \pref{eqdiffconfl} is the confluent hypergeometric function
$$
\iper(a,c;s)=1+\frac{a}{c}\, s+\frac{a(a+1)}{c(c+1)}\, \frac{s^2}{2!}+\cdots
=\sum_{k=0}^\infty \frac{(a)_k}{(c)_k}\, \frac{s^k}{k!}\ ,
$$
where $(a)_0=1$, $(a)_k=\Gamma(a+k)/\Gamma(a)$,
so that for real $\la\neq 0$
$$
u(z,t)=e^{i\la t}\,e^{-\la |z|^2/4}\,
\iper(\tfrac{\dimH}{2}-\tfrac{\xi}{2\la},\dimH;\la |z|^2/2)\ .
$$
When $\la=0$  and $\xi$ is real,
a similar procedure shows that 
$$
u 
(z,t)=\sum_{k=0}^\infty \frac{(-1)^k}{(\dimH)_k}\,
\frac{(\xi |z|^2/4)^k}{k!}={\mathcal J}_{\dimH-1}(\xi |z|^2/4)
\qquad\forall (z,t)\in \Hn,
$$
where 
$$
{\mathcal J}_{\beta}(s) 
= 
\sum_{k=0}^{+\infty}
\frac{(-s)^k }{k!\,(\beta+1)_k}
\qquad \forall s\in \C.
$$
Note that $J_{\beta}(u)=\displaystyle\frac{(u /2)^{\beta }}{\beta!\,\,}  
{\mathcal J}_{\beta}\left(u^2 /4\right)$
  is the   Bessel function of the first kind of order $\beta$.
  
  Therefore for every pair of  real 
numbers $\xi$ and $\la$ we have the spherical function
\begin{equation*} 
  \Phi_{\xi,\la}(z,t)=
\begin{cases}
 e^{i\la t}\, e^{-\la |z|^2/4}\, 
\iper \left(\frac{\dimH}{2} -\frac{\xi}{2 \la},\dimH;\frac{\la |z|^2}{2}\right) 
&  
\la\not= 0
\\
{\mathcal J}_{\dimH-1}(\xi |z|^2/4)&    
\la = 0
\end{cases}
\qquad \forall (z,t)\in \Hn.
\end{equation*}
 We now verify
  that  $\la\longmapsto  \Phi_{\xi,\la}(z,t)$ is regular in $\la=0$.
Indeed, something more holds.
 
\begin{lemma}\label{olosferiche}
The function  $(x,y,t,\xi,\la)\longmapsto   
\Phi_{\xi,\la}(x+iy,t)$ extends to a holomorphic function on 
$\C^{2\dimH+3}$.
\end{lemma}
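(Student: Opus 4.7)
The idea is to rewrite $\Phi_{\xi,\la}$ as a single power series whose terms are polynomials in $(\xi,\la,\omega,t)$, where $\omega=\sum_{j=1}^\dimH(x_j^2+y_j^2)$ is the polynomial extension of $z\mapsto|z|^2$ to $\C^{2\dimH}$; holomorphy on $\C^{2\dimH+3}$ will then follow from locally uniform convergence of the series.

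First I would observe that for real $x,y\in\R^\dimH$, $z=x+iy$, one has $|z|^2=\omega(x,y)$, and $\omega$ is a polynomial, hence entire in $(x,y)\in\C^{2\dimH}$. So the problem reduces to showing that the function
$$
F(\xi,\la,\omega,t)=e^{i\la t}\,e^{-\la\omega/4}\,\iper\bigl(\tfrac \dimH2-\tfrac\xi{2\la},\dimH;\tfrac{\la\omega}2\bigr)
$$
extends to an entire function of $(\xi,\la,\omega,t)\in\C^4$, with the correct value at $\la=0$.

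Next, I would exploit the key algebraic identity
$$
(a)_k\,s^k=\prod_{j=0}^{k-1}(a+j)\,s,
$$
applied with $a=\dimH/2-\xi/(2\la)$ and $s=\la\omega/2$. Each factor becomes
$$
(a+j)\,s=\frac{(\dimH+2j)\la\omega-\xi\omega}{4},
$$
so the putative pole at $\la=0$ disappears and $(a)_k s^k$ is a polynomial in $(\xi,\la,\omega)$ of total degree at most $2k$. Writing
$$
P_k(\xi,\la,\omega):=\frac1{(\dimH)_k\,k!}\prod_{j=0}^{k-1}\Bigl(\frac{(\dimH+2j)\la\omega-\xi\omega}{4}\Bigr),
$$
the series $\sum_k P_k(\xi,\la,\omega)$ is a formal candidate for the holomorphic continuation of $e^{\la\omega/4}F/e^{i\la t}$.

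The technical step is the convergence bound. Using $(\dimH)_k\geq k!$ (for $\dimH\geq1$) and bounding each factor by $C(|\la|+|\xi|)|\omega|$ on a fixed compact set $K\subset\C^{2\dimH+3}$, I would estimate
$$
|P_k(\xi,\la,\omega)|\leq\frac{M_K^k}{(k!)^2}
$$
for a constant $M_K$ depending only on $K$. Hence $\sum_k P_k$ converges uniformly on compact subsets of $\C^4$ (even with superexponential decay), so it defines an entire function of $(\xi,\la,\omega)$. Multiplying by the entire factor $e^{i\la t}e^{-\la\omega/4}$ and composing with the polynomial $\omega(x,y)$ yields an entire function on $\C^{2\dimH+3}$.

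Finally, I would verify that this extension agrees with $\Phi_{\xi,\la}$. For $\la\neq0$ the identification is a tautology from the definition of $\iper$. At $\la=0$, each product collapses to $P_k(\xi,0,\omega)=(-\xi\omega/4)^k/((\dimH)_k\,k!)$, so the series reduces to $\mathcal J_{\dimH-1}(\xi\omega/4)$, matching the formula given for $\Phi_{\xi,0}$. The only subtle point is the uniform bound on the polynomials $P_k$; once that is secured, the remaining verifications are direct.
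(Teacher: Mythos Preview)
Your approach is essentially the paper's: rewrite the $k$-th term of the hypergeometric series as the polynomial $\dfrac{\omega^k}{(\dimH)_k\,k!\,4^k}\prod_{j=0}^{k-1}\bigl((\dimH+2j)\la-\xi\bigr)$ in $(\xi,\la,\omega)$, then appeal to locally uniform convergence. One small slip in your estimate: the factor $\bigl((\dimH+2j)\la-\xi\bigr)\omega/4$ depends on $j$, so it is \emph{not} bounded by a $j$-independent constant $C(|\la|+|\xi|)|\omega|$; the product over $j=0,\dots,k-1$ therefore picks up an extra factorial, and the correct bound is $|P_k|\le M_K^k/k!$ rather than $M_K^k/(k!)^2$ --- still ample for uniform convergence on compacta, so the argument survives unchanged.
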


\begin{proof}
Note that when $\la\not=0$, $z=x+iy$,
$$
\begin{aligned}
\iper \left(\frac{\dimH \la-\xi}{2 \la},\dimH;\frac{\la |z|^2}{2}\right)
&= 
\sum_{k=0}^{\infty}
\frac{\left( 
\frac{\dimH \la-\xi}{2 \la}
\right)_k}
{(n)_k\,k! }
   \left(
 \frac{\la |z|^2}{2}
 \right)^k 
 \\
  &=1+
\sum_{k=1}^{\infty}
\frac{(x^2+y^2)^{k}}{(n)_k\,k!\,4^k}\,\,
\prod_{d=0}^{k-1}\left(
  \la(2d+\dimH)-\xi  \right)
,
  \end{aligned}
$$ 
so that for all $(\xi,\la,x,y,t)$ the function
\begin{equation}\label{Phi}
\Phi_{\xi,\la}(x+iy,t)
=e^{i\la t}\, e^{-\la (x^2+y^2)/4}\, \left(1+
\sum_{k=1}^{\infty}
\frac{(x^2+y^2)^{k}}{(n)_k\,k!\,4^k}\,\,
\prod_{d=0}^{k-1}\left(
  \la(2d+\dimH)-\xi  \right)\right)
\end{equation}
is a series of entire functions converging  uniformly
on compact sets.
\end{proof}

By analytic continuation, $\Phi_{\xi,\la}$ 
is the spherical function for every $(\xi,\la)\in\C^2$.

\subsection{Bounded spherical functions} 
 The Gelfand spectrum of the Banach algebra $\Lunorad$ of radial integrable functions  
is given by the set 
  of normalized bounded spherical functions, equipped with the compact open topology.   
 We recall
  \cite{Kora}  that  $\Phi_{\xi,\la}$ is bounded on $\Hn$ 
  if and only if $(\xi,\la)$ 
belongs to the so called  Heisenberg fan  given by
\begin{equation}\label{cfan}
\cfan=\fan\cup \set{(\xi,0)\in \R^2\,\,:\,\, \xi\geq 0},
\end{equation}
where
$$
\fan=\set{(\xi,\la)\in\R^2:\la\ne0\,,\, \xi=|\la|(2j+ \dimH)\,,\, \,  j\in \N }\ .
$$
It is known that $\cfan$ is homeomorphic to  the Gelfand spectrum~\cite{FR, BJRW}. 
\inutile{
Una maniera potrebbe essere quella di tagliare del tutto e scrivere: stime standard
sulle confluenti implicano che $\Phi_{\xi,\la}$ \`e limitata su $\Hn$
se e solo se $(\xi,\la)$ 
belongs to the so called  Heisenberg fan  given by ...
Moreover by (la formula lebedev) ... and
when $(\la,|\la|(2j+\dimH))$ is in $\fan$, 
polinomi di Laguerre, which is the form that we usually find in literature.
}
 
When $(\xi,\la)$ is in $\fan$, the spherical function
$\Phi_{\xi,\la}$ can be written 
in terms of Laguerre polynomials, which is the form that we usually find in literature.
Indeed,  
the relation (see \cite[p.~253, formula~(7)]{E}
\begin{equation}\label{lebedev}
\iper(a,\dimH;x)=e^x\, \iper(\dimH-a, \dimH;-x),
 \end{equation} 
implies that 
$$
\Phi_{\xi,\la}(z,0)
=\Phi_{\xi,|\la|}(z,0)=
e^{-|\la| |z|^2/4}\,
\iper \left(\frac{\dimH}{2}-\frac{\xi}{2 |\la|},\dimH;\frac{|\la| |z|^2}{2}\right)
$$
and when $\xi=|\la|(2j+\dimH)$ 
the hypergeometric function in the previous formula coincides with 
the normalized
   $j^{\text{th}}$ Laguerre polynomial
 of order $\dimH -1$, i.e., 
 $$
 \iper \left(-j,\dimH;\frac{|\la| |z|^2}{2}\right) 
 =\frac1{\binom{j+ \dimH-1}{j}}\sum_{k=0}^j \binom{j+\beta }{ j-k}
   \frac{(-\frac{|\la| |z|^2}{2})^k}{ k!}.
 $$
\medskip

\subsection{Estimates of derivatives of spherical functions}
In this subsection we exploit the fact that  the bounded  spherical functions 
$\left\{\Phi_{\xi,\la}\right\}_{(\xi,\la)\in \cfan}$  are averages of coefficients of irreducible 
unitary representations of $\Hn$   
to give some estimates that we shall need in the sequel.
Referring to the Bargmann-Fock model 
of the irreducible representations $\pi^\lambda$ of $H_n$ 
associated to the character $e^{i\lambda t}$ on the center, 
we represent the operators $\pi^\lambda(z,t)$ as matrices 
$\big(\pi^\lambda_{\bf j,\bf k}(z,t)\big)_{\bf j, \bf k\in \N^\dimH}$ 
in the basis of normalized monomials (for more details see the monographs~\cite{Folland}
or~\cite{Th}). Then the bounded spherical functions 
can be written as averages of
diagonal entries of this matrix   according to the rule
\begin{equation}\label{defPhi}
\Phi_{\xi,\la}=\frac1{\binom{j+ \dimH-1}{j}}\sum_{|{\bf j}|=j}\pi^\lambda_{\bf j,\bf j},
\end{equation}
 where 
$\xi=|\la|(2j+\dimH)$ and $|{\bf j}|={\bf j}_1+\ldots +{\bf j}_n$ for $ {\bf j}  =\left({\bf j}_1, \ldots ,{\bf j}_n\right)\in \N^\dimH$.

\begin{lemma}\label{stimasferiche} Let ${\op D}^I$ be a
differential operator of homogeneous degree $\deg I=  \al$ as in\pref{monomi}. Then  
$$
|{\op D}^I \Phi_{\xi,\la}(z,t)|\leq C_\al\, (1+\xi)^{\al /2}
\qquad\forall (\xi,\la)\in \cfan,\,\, (z,t)\in \Hn.
$$
 \end{lemma}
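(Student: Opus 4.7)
The plan is to estimate $|\mathbb{D}^I\Phi_{\xi,\lambda}|$ via the Bargmann--Fock realization using the representation--theoretic formula \eqref{defPhi}.

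First, for $(\xi,\lambda)\in\fan$, write $\xi=|\lambda|(2j+\dimH)$ and recall that \eqref{defPhi} expresses $\Phi_{\xi,\lambda}$ as an average of the diagonal matrix coefficients $\pi^\lambda_{\mathbf j,\mathbf j}$ indexed by $|\mathbf j|=j$. Since $\mathbb{D}^I$ is left--invariant, the standard identity for matrix coefficients gives
\[
\mathbb{D}^I\pi^\lambda_{\mathbf j,\mathbf j}(z,t)=\bigl\langle \pi^\lambda(z,t)\,d\pi^\lambda(\mathbb{D}^I)e_{\mathbf j},e_{\mathbf j}\bigr\rangle.
\]
Because $\pi^\lambda(z,t)$ is unitary, Cauchy--Schwarz gives
\[
|\mathbb{D}^I\pi^\lambda_{\mathbf j,\mathbf j}(z,t)|\le \bigl\|d\pi^\lambda(\mathbb{D}^I)e_{\mathbf j}\bigr\|.
\]

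Next I would use the explicit form of the infinitesimal representation in the Fock model: $d\pi^\lambda(T)=i\lambda\,\mathrm{Id}$, while $d\pi^\lambda(Z_r)$ and $d\pi^\lambda(\bar Z_r)$ are $\sqrt{|\lambda|/2}$ times (up to a sign depending on $\sgn\lambda$) the creation and annihilation operators $a_r^*$ and $a_r$, which act on the normalized monomials by $a_r^* e_{\mathbf k}=\sqrt{\mathbf k_r+1}\,e_{\mathbf k+\mathbf 1_r}$ and $a_r e_{\mathbf k}=\sqrt{\mathbf k_r}\,e_{\mathbf k-\mathbf 1_r}$. Hence, writing $s=i_1+\cdots+i_{2\dimH}$ and $m=i_{2\dimH+1}$, so that $\deg I=\alpha=s+2m$,
\[
\bigl\|d\pi^\lambda(\mathbb{D}^I)e_{\mathbf j}\bigr\|\le \Bigl(\tfrac{|\lambda|}{2}\Bigr)^{s/2}|\lambda|^{m}\,\prod \sqrt{\mathbf j_r+c_r}\ \le\ C_\alpha\,|\lambda|^{s/2+m}\,(j+\alpha)^{s/2},
\]
the last inequality following because each factor is bounded by $\sqrt{|\mathbf j|+\alpha}\le\sqrt{j+\alpha}$ and at most $s$ such factors occur.

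Since $|\lambda|(2j+\dimH)=\xi$ on $\fan$, one rewrites
\[
|\lambda|^{s/2+m}(j+\alpha)^{s/2}\le C_\alpha'\bigl(|\lambda|(2j+\dimH)\bigr)^{s/2}|\lambda|^{m}=C_\alpha'\,\xi^{s/2}|\lambda|^m\le C_\alpha'\,\xi^{s/2+m}=C_\alpha'\,\xi^{\alpha/2},
\]
where I used $|\lambda|\le \xi/\dimH$. Averaging over $|\mathbf j|=j$ preserves this bound, giving the estimate on $\fan$. Replacing $\xi^{\alpha/2}$ by $(1+\xi)^{\alpha/2}$ is a harmless adjustment that absorbs the trivial range where $\xi$ is small. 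Finally, the ray $\{(\xi,0):\xi\ge 0\}$ is handled either separately from the explicit formula $\Phi_{\xi,0}(z,t)=\mathcal J_{\dimH-1}(\xi|z|^2/4)$ (noting $T\Phi_{\xi,0}=0$, so only $Z_r,\bar Z_r$ derivatives matter, each bringing down a factor $\sqrt{\xi}$ after using standard bounds for $\mathcal J_{\dimH-1}$ and its derivatives) or by a limiting argument from the estimate on $\fan$ using the holomorphic dependence of $\Phi_{\xi,\lambda}$ established in Lemma~\ref{olosferiche}.

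The main bookkeeping obstacle is ensuring that the powers of $|\lambda|$ coming from $d\pi^\lambda$ combine with the powers of $\sqrt{j+\alpha}$ coming from creation/annihilation to produce exactly $\xi^{\alpha/2}$; the homogeneity $\alpha=s+2m$ makes this work out, but one must be careful to bound every creation/annihilation factor uniformly in $\mathbf j$ with $|\mathbf j|=j$, which is where the elementary inequality $\mathbf j_r+c_r\le j+\alpha$ plays a decisive role.
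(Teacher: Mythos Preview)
Your proposal is correct and follows essentially the same approach as the paper: both exploit the Bargmann--Fock realization \eqref{defPhi}, use unitarity to reduce to bounding the action of $d\pi^\lambda(\mathbb D^I)$ on the monomial basis via the explicit creation/annihilation formulas, and then convert the resulting powers of $|\lambda|$ and $j$ into $(1+\xi)^{\alpha/2}$ using $\xi=|\lambda|(2j+\dimH)\ge|\lambda|$. The paper writes out the product of square-root factors explicitly and passes to the ray $\lambda=0$ by continuity, which is exactly your limiting argument.
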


\begin{proof} Let $\la\neq 0$.
Here and afterwards, if any component of the multiindeces ${\bf j}$ or ${\bf k}$
is negative, then $\pi^\lambda_{\bf j,\bf k}=0$.
Since the representations are unitary, $|\pi^\lambda_{\bf j,\bf k}|\leq 1$ and
it is easy to check that
$$
Z_i \pi^\lambda_{\bf j,\bf k}=
\begin{cases}
-\sqrt{\frac{k_i\la}{2}}\, \pi^\la_{{\bf j},{\bf k}-{\bf e}_i}
&\la>0
\\
\sqrt{\frac{(k_i+1)|\la|}{ 2}}\, \pi^\la_{{\bf j},{\bf k}+{\bf e}_i}
&\la<0
\end{cases}
\qquad
\bar Z_i \pi^\la_{{\bf j},{\bf k}}=
\begin{cases}\sqrt{\frac{(k_i+1)\la}{2}}\, \pi^\la_{{\bf j},{\bf k}+{\bf e}_i}
&\la>0
\\
-\sqrt{\frac{k_i|\la|}{ 2}}\,\pi^\la_{{\bf j},{\bf k}-{\bf e}_i}
&\la<0\ 
\end{cases}
$$
and $T\pi^\la_{{\bf j},{\bf j}}=i\,\la\, \pi^\la_{{\bf j},{\bf j}}$.
Here ${\bf e}_i$ is the multiindex with just the $i^{th}$ component equal to $1$.

Suppose that $(\xi,\la)$ is in $\fan$, with $\xi=|\la|(2|{\bf j}| +\dimH)$.
Then if ${\op D}^I =Z^{\bf k}\bar Z^{\bf h} T^s$ with 
$\deg I=\al =|{\bf k}|+|{\bf h}|+2s$ we have 
\begin{align*}
|
{\op D}^I\pi^\la_{{\bf j},{\bf j}}
|
&
=|\la|^s\,
|Z^{\bf k}\bar Z^{\bf h}\pi^\la_{{\bf j},{\bf j}} |
\\
&=|\la|^s\,
\begin{cases}
\sqrt{\prod_{i=1}^n\prod_{\ell=1}^{h_i}\tfrac{\la}{2}(j_i+\ell)}\, 
\sqrt{\prod_{i=1}^n\prod_{\ell=1}^{k_i}\tfrac{\la}{2}(j_i+h_i+1-\ell)}\,
|\pi^\la_{{\bf j},{\bf j+h-k}}|
&\la>0
\\
\sqrt{\prod_{i=1}^n\prod_{\ell=1}^{h_i}\tfrac{|\la|}{2}(j_i+1-\ell)}\,
\sqrt{\prod_{i=1}^n\prod_{\ell=1}^{k_i}\tfrac{|\la|}{2}(j_i-h_i+\ell)}\,
|\pi^\la_{{\bf j},{\bf j-h+k}}|
&\la<0
\end{cases}
\\
&\leq C\,
|\la|^s\,\sqrt{
 |\la|^{|{\bf h}|+|{\bf k}|}(|{\bf j}|+|{\bf h}|)^{|{\bf h}|}
\,\,\,
(|{\bf j}|+|{\bf h}|+|{\bf k}|)^{|{\bf k}|}}
\\
&\leq C_{\al}\, (1+\xi)^{\al/2}.
\end{align*}
Here we have used the fact that in $\fan$ we have $\xi=|\la|(2|{\bf j}|+\dimH)\geq |\la|$.
By~\pref{defPhi} the thesis follows on $\fan$, and 
by continuity the same estimates hold on $\cfan$, thus proving the lemma. 
\end{proof}

\section{Spherical transform}
As usual, we denote by  $\dualH{\cdot}{\cdot}$ the dual  pairing on 
 $\Hn$ and we shall also write 
$$
\dualH{f}{g}=\int_{\Hn} f(z,t)\, g(z,t)\, dz\, dt,
  \qquad \forall f,g\in \cS(\Hn).
$$
Given a measurable  function $f$ on $\Hn$  we  
denote by $\check f$   the function defined by 
 $\check f(x)=f(x^{-1})$ for every $x$ in $\Hn$.

\subsection{Definitions and main facts}
\label{defgel}
Let $f$ be in $\Lunorad$. We  define   its   spherical 
transform~$\gel f$     by  
$$
\gel f(\xi,\lambda) =\int_{\Hn} f(x)\, \Phi_{\xi,\lambda}(x^{-1})\, dx=
\dualH{f}{\check\Phi_{\xi,\lambda}}
\qquad \forall (\xi,\la)\in \cfan.
$$
Then $\gel f$ is a continuous function on $\cfan$. 

 The inversion formula for a function~$f$ in $\schwkrad$ is 
$$
f(x)=\int_\cfan \gel f(\xi,\la)\, \Phi_{\xi,\la}(x)\, d\mu(\xi,\la)
\qquad \forall x\in \Hn,
$$
where $\mu$ is the Plancherel measure defined by  
$$
\int_\cfan \psi \, d\mu
=\frac{1}{(2\pi)^{\dimH+1}}\,\int_\R 
  \sum_{j=0}^{\infty}\binom{j+ \dimH-1}j\,
  \psi(|\la|(2j+ \dimH),\la)\, |\la|^{\dimH}\, d\la
\qquad\forall \psi\in  
C_c(\cfan).
$$

It is easy to check that the function $(\xi,\la)\mapsto (1+\xi)^{-(\dimH+2)}$
is in $L^1(\cfan)$, so  
\begin{equation}
\label{integrabilita}
\| \psi \|_{L^1(\cfan)}\leq C\,
\|(1+|\xi|)^{\dimH+2}\,\psi \|_{L^\infty(\R^2)}
\qquad \forall \psi\in \cS(\R^2).
\end{equation}

As in~\cite{ADR}, we denote by 
$\cS(\cfan)$  the space of 
restrictions to $\cfan$ of Schwartz functions on $\R^2$,
endowed with the quotient topology $\cS(\R^2)/\set{\phi\, :\, \phi|_\cfan=0}$.
For  radial Schwartz functions  on $\Hn$, we have proved
the following result.
 
\begin{theorem}\textnormal{\cite[Corollary 1.2]{ADR}} \label{nostro}
The spherical transform is a topological isomorphism between
the spaces $\schwkrad$
 and $\cS(\cfan)$.
\end{theorem}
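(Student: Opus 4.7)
The plan is to exploit the partial Fourier transform in the central variable $t$ to reduce Theorem~\ref{nostro} to a statement about Laguerre coefficients of a radial Schwartz function on $\C^\dimH$ depending on a parameter. Writing $\tilde f(z,\lambda)=\int_\R f(z,t)\,e^{-i\lambda t}\,dt$, the function $\tilde f$ is radial in $z$ and Schwartz in $(z,\lambda)$. Using the Laguerre form of $\Phi_{\xi,\lambda}$ on the fan (see the discussion after \eqref{lebedev}), a direct computation gives, for $\lambda\neq 0$,
\[
\cG f\bigl(|\lambda|(2j+\dimH),\lambda\bigr)=c_j(\lambda),
\]
where $c_j(\lambda)$ is proportional to the $j$-th Laguerre--Fourier coefficient of $z\mapsto\tilde f(z,\lambda)$ in the Laguerre basis adapted to $|\lambda|$. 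Thus the task becomes: describe, in Schwartz-extension terms, the doubly-indexed family $\{c_j(\lambda)\}$ produced by radial Schwartz functions $f$.

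For continuity of $\cG:\schwkrad\to\cS(\cfan)$ I would first establish rapid decay of $\cG f$ on $\cfan$ via the intertwining $\cG\bigl(L^a(i^{-1}T)^bf\bigr)(\xi,\lambda)=\xi^a\lambda^b\,\cG f(\xi,\lambda)$, valid because $L$ and $i^{-1}T$ preserve $\schwkrad$. The harder step is to exhibit a Schwartz extension $F\in\cS(\R^2)$ with $F|_{\cfan}=\cG f$, depending continuously on $f$: the idea is to interpolate the discrete data $\{c_j(\lambda)\}_j$ into a smooth function of $(\xi,\lambda)$ by means of the Hermite/Laguerre functional calculus that is already encoded in $\tilde f$, producing a candidate $F(\xi,\lambda)$ that manifestly agrees with $c_j(\lambda)$ on each ray $\xi=|\lambda|(2j+\dimH)$. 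A careful Taylor analysis at the vertex $(0,0)$, where the rays of $\fan$ meet the horizontal ray $\{(\xi,0):\xi\ge 0\}$, takes care of compatibility across branches. For the reverse direction, given $F\in\cS(\R^2)$ I would define
\[
f(z,t)=\int_\cfan F(\xi,\lambda)\,\Phi_{\xi,\lambda}(z,t)\,d\mu(\xi,\lambda);
\]
the estimates of Lemma~\ref{stimasferiche} together with~\eqref{integrabilita} yield absolute convergence and justify differentiation under the integral sign, so $f\in\schwkrad$, while a Fubini argument plus the inversion/Plancherel formula give $\cG f=F|_\cfan$. Continuity of $\cG\inv$ then follows from the open mapping theorem for Fr\'echet spaces.

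The main obstacle is the Schwartz-extension step: the values $c_j(\lambda)$ are prescribed only on a family of discrete one-dimensional rays of $\cfan$, and one must assemble them into a globally Schwartz function of $(\xi,\lambda)\in\R^2$ with seminorms controlled by those of $f$. Obtaining uniform control of all mixed derivatives at the vertex $(0,0)$, where all the rays of $\cfan$ accumulate, is the delicate technical point, and this is where the smoothness built into the Hermite/Laguerre calculus on $\Hn$ must be converted into genuine joint regularity in the two spectral variables.
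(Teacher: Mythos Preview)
The paper does not prove this theorem: it is quoted verbatim from \cite[Corollary~1.2]{ADR} and used as a black box throughout. There is therefore nothing in the present paper to compare your attempt against.

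That said, your outline is a reasonable caricature of the strategy actually carried out in \cite{ADR,ADR1}: reduce via the partial Fourier transform in $t$ to Laguerre expansions with parameter $\lambda$, and then show that the resulting discrete data on the rays of $\Sigma$ assemble into the restriction of a Schwartz function on $\R^2$. You correctly isolate the genuine difficulty, namely constructing a continuous Schwartz extension from the fan to the plane with control of all seminorms, especially near the vertex where the rays accumulate. What your sketch does not supply is any mechanism for this step: ``interpolate the discrete data by means of the Hermite/Laguerre functional calculus'' and ``a careful Taylor analysis at the vertex'' are placeholders, not arguments. In \cite{ADR,ADR1} this is handled by specific extension operators built from the Taylor development of the Laguerre data in $\lambda$ and a Whitney-type extension adapted to the geometry of $\Sigma$; without something of that nature your proposal remains a plan rather than a proof. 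The inverse direction you describe (define $f$ by the inversion integral, use Lemma~\ref{stimasferiche} and \eqref{integrabilita}, then invoke the open mapping theorem) is essentially correct and is the easier half.
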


On the other hand, Lemma~\ref{olosferiche}
implies that when $f$ is compactly supported
we can regard $\gel f$ as a function on $\C^2$.

\begin{proposition}  \label{olomorfia}
If  $f$ is  in $\crad(\Hn)$ then  $\gel f$ extends 
to the  holomorphic function $F$ 
on $\C^2$ given by the rule
$$
F(\xi,\la)=\dualH{f}{\check\Phi_{\xi,\la}}
\qquad\forall (\xi,\la)\in \C^2.
$$
\end{proposition}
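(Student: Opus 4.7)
The plan is to use the joint holomorphy in $(\xi,\la)$ of the spherical function, established in Lemma~\ref{olosferiche}, together with the compact support of $f$, to see that the integral
$$F(\xi,\la)=\int_{\Hn} f(x)\,\Phi_{\xi,\la}(x^{-1})\,dx$$
makes sense and is holomorphic for every $(\xi,\la)\in\C^2$. First, Lemma~\ref{olosferiche} makes $(x,\xi,\la)\mapsto\Phi_{\xi,\la}(x^{-1})$ jointly continuous on $\Hn\times\C^2$, so the integrand is continuous and supported on the compact set $K=\supp f$. Hence the integral converges absolutely for every $(\xi,\la)\in\C^2$, and dominated convergence on $K$ yields continuity of $F$ on $\C^2$.

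To prove holomorphy I would combine Morera's theorem with Hartogs' theorem on separate analyticity. Fix $\la_0\in\C$ and a closed triangle $\Delta\subset\C$; the continuity of $(\xi,x)\mapsto\Phi_{\xi,\la_0}(x^{-1})$ on the compact set $\partial\Delta\times K$ justifies Fubini, so that
$$\int_{\partial\Delta}F(\xi,\la_0)\,d\xi=\int_K f(x)\int_{\partial\Delta}\Phi_{\xi,\la_0}(x^{-1})\,d\xi\,dx=0,$$
where the inner integral vanishes by Cauchy's theorem applied to the entire function $\xi\mapsto\Phi_{\xi,\la_0}(x^{-1})$. Morera's theorem then gives holomorphy of $\xi\mapsto F(\xi,\la_0)$; the symmetric argument yields holomorphy of $\la\mapsto F(\xi_0,\la)$, and continuity of $F$ lets Hartogs' theorem conclude that $F$ is entire on $\C^2$.

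Finally, when $(\xi,\la)\in\cfan$ the definition of $F$ agrees by inspection with that of $\gel f$ given in \S\ref{defgel}, so $F$ is the required entire extension. The only delicate point is the exchange of integrals in the Morera step; it is harmless because Lemma~\ref{olosferiche} makes $\Phi_{\xi,\la}(x^{-1})$ continuous in all variables and hence bounded on each compact product set $\partial\Delta\times K$.
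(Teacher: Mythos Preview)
Your argument is correct. The paper does not give an explicit proof of this proposition; it is stated as an immediate consequence of Lemma~\ref{olosferiche}, and your write-up simply fills in the standard details (continuity on compacta, Morera, Hartogs) that the authors leave to the reader.
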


\subsection{Holomorphic versus Schwartz extensions}\label{sec:olo}
Given $f$ in $\crad(\Hn)$, we have found two
ways of extending its spherical transform $\gel f$ to a smooth 
function
on $\R^2$. Namely, by Theorem~\ref{nostro}, there exists a Schwartz function
$G$ on $\R^2$ such that $G|_\cfan=\gel f$, and by Proposition~\ref{olomorfia}
the  function $F$ is the holomorphic extension of $\gel f$ to $\C^2$.
So $G|_\cfan=F|_\cfan=\gel f$.

We observe that any two entire functions on $\C^2$,
which coincide on~$\cfan$, 
are everywhere equal, so $F$ is the unique
continuation of $\gel f$
to an entire function on $\C^2$. 

A question arises naturally: 
if $f$ is  in $\crad(\Hn)$, is it true that $F$, 
when restricted to real values of $(\xi,\la)$,  
is a Schwartz function on $\R^2$?

In the rest of this subsection we show that the answer   can be negative.

Let $f$ be a function of the form 
$$
f(z,t)=g(z)\otimes h(t) \qquad \forall (z,t)\in \Hn
$$ 
where $h$ is even and
compactly supported
and $g$ is nonpositive and supported in  $1<|z|<4$,  equal to $-1$
when $2<|z|<3$.

Let $\mathcal{F}h$ be the Euclidean Fourier transform of $h$.
We now show that the function $\la\ge 0\mapsto |\mathcal{F}h(\la)|\,e^{\la/2}$
is not bounded. Indeed, since $h$ is even, if it were bounded, 
then  
for every $b$, $0\leq b<1/2$, the function 
$\la\mapsto e^{b|\la|}\mathcal{F}h(\la)$ would be in $L^2(\R)$. By 
the Paley--Wiener Theorem for the Euclidean transform 
$h=\mathcal{F}^2h$ would continue analytically to
 $\set{w\,:\, |\IM(w)|<1/2}$, but this cannot
 be true since $h$ has compact support.
 Therefore the function $\la\ge 0\mapsto |\mathcal{F}h(\la)|\,e^{\la/2}$
is not bounded. 

Now, if $(\xi,\la)\in \R^2\mapsto F(\xi,\la)$ were rapidly decreasing, then the same
would hold true for the function
$\la\mapsto F((\dimH+1)\la,\la)$. Note that when $\la>0$
$$
F((\dimH+1)\la,\la)=\mathcal{F}h(\la) 
\int_{1<|z|<4}g(z)\, e^{-\la |z|^2/4}\, \iper(-\tfrac{1}{2}, \dimH,\la |z|^2/2)\, dz.
$$

Moreover $\iper(-\tfrac{1}{2},\dimH,x)\leq 0$ when $x\geq 2\dimH$
 and by the estimate 
(see~\cite[p.~27, formula~(3)]{E})
$$\iper(a,\dimH;x)=\frac{\Gamma(\dimH)}{\Gamma(a)}e^x\,x^{a-\dimH}(1+O(x^{-1})),
\qquad \RE(x)\to\infty,\quad a\not=0,-1,-2,\ldots,
$$
when $\la\to+\infty$ we obtain 
\begin{align*}
|F((\dimH+1)\la,\la)|
&=|\mathcal{F}h(\la)|\, 
\int_{1<|z|<4}  g(z)\,e^{-\la |z|^2/4}\, \iper(-\tfrac{1}{2}, \dimH,\la |z|^2/2)\, dz
\\
&
\geq C\,
|\mathcal{F}h(\la)|\, \int_{2<|z|<3}  e^{-\la |z|^2/4}\,
e^{\la |z|^2/2}\, (\la |z|^2/2)^{-1/2-\dimH}\,\, dz
\\
&\geq C\, |\mathcal{F}h(\la)|\,e^{\la/2},
\end{align*}
so the function $\la\mapsto F((\dimH+1)\la,\la)$ is not bounded.


\section{The spherical transform of radial tempered distributions}
   
As usual, we denote by $\dualR{\cdot}{\cdot}$   the dual  pairing on 
$\R^2$   and we also write 
$$
\dualR{\varphi}{\psi}=\int_{\R^2} \varphi(x)\,\psi(x)\, dx\qquad \qquad
\forall \varphi,\psi\in \cS({\R^2}).
$$  
Let $\Pi\, :\, \cS(\Hn) \longrightarrow \cS(\Hn) $ be the averaging projector  
defined by  
$$
\Pi f 
=\int _{\Un}f\circ k\,\,dk\qquad \forall f\in \cS(\Hn).
$$
%
Then the Schwartz space on $\Hn$ decomposes into the direct sum 
\hbox{$\cS(\Hn)= \schwkrad \oplus \ker \Pi$} so that  
$ \schwkrad $  is isomorphic to the quotient space 
 $\cS(\Hn)/ \ker \Pi$. It follows that  
 the dual space  $ \big(\schwkrad\big)' $ is isomorphic    
 to the subspace $\schwkradp$ of $\cS'(\Hn)$
 consisting of all 
tempered distributions $\Lambda$  on $\Hn$ such that
$$ 
\dualH{ \Lambda}{ f}=0\ \qquad\forall\,f\in\ker \Pi.
$$
 On the other hand the dual space of $\cS(\cfan)$ 
 is naturally isomorphic   to the subspace $\cS_0'(\cfan)$ of  $\cS'(\R^2)$ 
 consisting of all 
tempered distributions $U$  on $\R^2$ such that
\begin{equation*} 
\dualR{ U}{ g}=0\ ,\qquad\forall\,g\in\cS(\R^2)\text{ such that }g=0\text{ on }\cfan\ .
\end{equation*}
 We note that the Plancherel formula can be written as
$$
\dualH{f}{\overline g}= \dualR{\gel f\, \mu}{\overline{\gel g}}
= \dualR{\gel f\, \mu}{{\gel \overline{\check g}}}
\qquad \forall f,g\in \schwkrad.
$$  
Therefore we are led to define
the spherical transform of a radial  tempered distribution $\Lambda$ on $\Hn$ as the 
distribution $\gel \Lambda$ in $\cS'(\R^2)$ given  by 
\begin{equation}\label{tildeT}
\dualR{\gel \Lambda}{\vp}=\dualH{\Lambda}{(\gel\inv \vp_{|_\Sigma})^{\check{\phantom a}}}
\qquad \forall \vp\in \cS(\R^2). 
\end{equation}
Clearly, if $\Lambda$ is in $\schwkradp$, then for every 
function $\vp$ in $\cS(\R^2)$ such that $\vp=0$  on $\cfan$ we have
$
\dualR{\gel \Lambda}{\vp}
=\dualH{\Lambda}{(\gel\inv \vp_{|_\Sigma})^{\check{\phantom a}}}
=0,
$
i.e., $\gel\Lambda$ is in $\cS_0'(\cfan)$.  
We recall that we have denoted by $\haar$ the Lebesgue measure on $\Hn$.
If  $f$ is a radial function in $L^1\cap L^2(\Hn)$,
then $f\haar$ is in $\schwkradp$
and $\gel (f\haar)=(\gel f)\,\mu$ where $\gel f$ has been defined in Subsection~\ref{defgel},
so formula~\pref{tildeT} provides an extension of the usual spherical transform.
 

Moreover it is easy to verify 
that $\gel{(L^j \Lambda)} 
=\xi^{j}\, \gel \Lambda 
$
for every $\Lambda $ in $\schwkradp$.
\inutile{ Indeed,
$$
\dualR{\gel \Lambda}{\vp}
=\dualH{f}{(\gel\inv \vp_{|_\Sigma})^{\check{\phantom a}}}
=\dualR{\gel f\,\mu}{\gel (\gel\inv \vp_{|_\Sigma})}
=\dualR{\gel f\,\mu}{\vp}
\qquad\forall \vp \in \cS(\R^2).
$$
}

With this notation Theorem~\ref{nostro} extends to radial tempered distributions 
in the following form.

\begin{corollary}\label{nostrodist} The spherical transform 
$\cG$  is a topological isomorphism between
the spaces $\schwkradp$
 and $\cS_0'(\cfan)$.
\end{corollary}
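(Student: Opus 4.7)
The plan is to derive this from Theorem~\ref{nostro} by a straightforward duality argument. The underlying idea is that $\schwkradp$ and $\cS_0'(\cfan)$ are natural topological realisations of the strong duals of $\schwkrad$ and $\cS(\cfan)$ respectively, so Theorem~\ref{nostro} automatically produces a topological isomorphism between them; all that remains is to check that this isomorphism coincides with the map given by formula~\eqref{tildeT}.

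First I would record the two identifications of the dual spaces. The projector $\Pi$ provides a continuous splitting $\cS(\Hn)=\schwkrad\oplus\ker\Pi$, so pulling back by $\Pi$ identifies $(\schwkrad)'$ topologically with the closed subspace $\schwkradp$ of $\cS'(\Hn)$. On the other side, since $\cS(\cfan)=\cS(\R^2)/\{g:g|_\cfan=0\}$ carries the quotient topology by definition, the dual of a quotient identifies $\cS(\cfan)'$ topologically with the annihilator $\cS_0'(\cfan)\subset\cS'(\R^2)$.

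Next I would observe that the check map $f\mapsto\check f$ is a topological involution on $\schwkrad$: if $f(z,t)$ depends only on $|z|$ and $t$, so does $f(-z,-t)$, and the Schwartz seminorms are invariant under $(z,t)\mapsto(-z,-t)$. Combined with Theorem~\ref{nostro}, this shows that the map $S:\cS(\R^2)\to\schwkrad$ defined by $S\vp=\bigl(\gel^{-1}(\vp|_\cfan)\bigr)^{\check{\phantom a}}$ is a continuous linear surjection with kernel $\{\vp:\vp|_\cfan=0\}$, and so descends to a topological isomorphism $\cS(\cfan)\to\schwkrad$. Its transpose $S^t$ is therefore a topological isomorphism $(\schwkrad)'\to\cS_0'(\cfan)$.

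Formula~\eqref{tildeT} reads precisely $\dualR{\gel\Lambda}{\vp}=\dualH{\Lambda}{S\vp}$, so $\gel\Lambda=S^t(\Lambda|_{\schwkrad})$; composing $S^t$ with the identification $(\schwkrad)'\cong\schwkradp$ from the first step makes the map $\gel$ of~\eqref{tildeT} the composition of two topological isomorphisms, and the corollary follows. I expect the main obstacle to be the careful verification of the identification $(\schwkrad)'\cong\schwkradp$ for the strong dual topologies: this reduces to checking that $\Pi$ is continuous from $\cS(\Hn)$ onto $\schwkrad$ for every Schwartz seminorm, which follows from a routine application of Minkowski's integral inequality to the defining integral of $\Pi$.
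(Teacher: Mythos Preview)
Your argument is correct and is exactly the duality argument the paper has in mind; in fact the paper gives no proof at all for this corollary, treating it as immediate from Theorem~\ref{nostro} together with the identifications $(\schwkrad)'\cong\schwkradp$ and $\cS(\cfan)'\cong\cS_0'(\cfan)$ set up just before it. You have simply spelled out the details the authors left implicit.
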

 
 We now study the behavior of the spherical transform of 
  radial compactly supported distributions.    
\begin{proposition}\label{distolo} 
Let $\Lambda$ be a radial compactly supported distribution on $\Hn$. Then   
\begin{equation}
\widehat \Lambda\; :\; (\xi,\lambda)  \longmapsto \dualH{\Lambda}{\check\Phi_{\xi,\lambda}}
\end{equation}
   is a holomorphic function on $\C^2$. Moreover
 $\widehat \Lambda\,\mu$ is in $\cS_0'(\cfan)$ and
$$
\gel \Lambda= \widehat \Lambda\,\mu\ ,
$$
i.e., $\gel \Lambda$ coincides with the function $\widehat \Lambda$.
\end{proposition}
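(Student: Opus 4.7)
The plan is to establish the three assertions in sequence—holomorphy of $\widehat\Lambda$ on $\C^2$, membership of $\widehat\Lambda\,\mu$ in $\cS_0'(\cfan)$, and the identification $\gel\Lambda=\widehat\Lambda\,\mu$—drawing on Lemmas~\ref{olosferiche} and~\ref{stimasferiche} together with Theorem~\ref{nostro}.

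First I would fix a compact set $K\subset\Hn$ containing a neighborhood of $\supp\Lambda$ and an integer $N\ge 0$ such that
$$|\dualH{\Lambda}{\phi}|\le C\sum_{\deg I\le N}\sup_K|{\op D}^I\phi|\qquad\forall\phi\in\cS(\Hn).$$
By Lemma~\ref{olosferiche}, for every multi-index $I$ the derivative ${\op D}^I\check\Phi_{\xi,\la}(z,t)$ is jointly continuous in $(z,t)\in K$ and entire in $(\xi,\la)\in\C^2$, with its power series converging uniformly on $K$ on compact subsets of $\C^2$. Standard differentiation under the pairing (equivalently, Morera's theorem applied on test contours in each variable) then yields that $\widehat\Lambda(\xi,\la)=\dualH{\Lambda}{\check\Phi_{\xi,\la}}$ is entire on $\C^2$. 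Lemma~\ref{stimasferiche} gives $|{\op D}^I\check\Phi_{\xi,\la}(z,t)|\le C_{\deg I}(1+\xi)^{\deg I/2}$ for $(\xi,\la)\in\cfan$, so $|\widehat\Lambda(\xi,\la)|\le C(1+\xi)^{N/2}$ on $\cfan$; combined with~\eqref{integrabilita}, this shows $\widehat\Lambda\,\vp$ is $\mu$-integrable for every $\vp\in\cS(\R^2)$, hence $\widehat\Lambda\,\mu$ is a well-defined tempered distribution on $\R^2$, trivially in $\cS_0'(\cfan)$ because $\mu$ is carried by $\cfan$.

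For the identification, take $\vp\in\cS(\R^2)$ and, via Theorem~\ref{nostro}, set $f=\gel^{-1}(\vp|_\cfan)\in\schwkrad$. The inversion formula gives
$$\check f(x)=\int_\cfan\vp(\xi,\la)\,\check\Phi_{\xi,\la}(x)\,d\mu(\xi,\la),$$
and, differentiating under the $\mu$-integral by means of Lemma~\ref{stimasferiche}, the rapid decay of $\vp|_\cfan$, and~\eqref{integrabilita}, the same identity passes to ${\op D}^I\check f$ for every $\deg I\le N$, with convergence uniform on $K$. Applying $\Lambda$ and exchanging with the $\mu$-integral then yields
$$\dualR{\gel\Lambda}{\vp}=\dualH{\Lambda}{\check f}=\int_\cfan\vp(\xi,\la)\,\widehat\Lambda(\xi,\la)\,d\mu(\xi,\la)=\dualR{\widehat\Lambda\,\mu}{\vp},$$
as required.

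The main obstacle is this last interchange: one must verify that the vector-valued integral representing $\check f$ converges in the $C^N(K)$-topology in which $\Lambda$ is continuous. This is a Fubini-type argument, justified by the polynomial growth in $\xi$ of $|{\op D}^I\check\Phi_{\xi,\la}|$ on $\cfan$ from Lemma~\ref{stimasferiche} balanced against the Schwartz decay of $\vp|_\cfan$, with total $\mu$-mass controlled by~\eqref{integrabilita}; no further delicate issue arises.
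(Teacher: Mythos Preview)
Your proof is correct and follows essentially the same approach as the paper: holomorphy via Lemma~\ref{olosferiche}, polynomial growth of $\widehat\Lambda$ on $\cfan$ via Lemma~\ref{stimasferiche} (yielding $\widehat\Lambda\,\mu\in\cS_0'(\cfan)$), and the identification by exchanging $\Lambda$ with the inversion integral. The only cosmetic difference is that the paper justifies the interchange by explicitly truncating the Plancherel integral to finite sums over $|\la|\le N$, $j\le N$, pushing $\Lambda$ inside the finite sum, and then invoking uniform convergence on compacta of the truncated inversion formula and all its derivatives, whereas you phrase the same step as convergence of the vector-valued $\mu$-integral in the $C^N(K)$-topology; the underlying estimates are identical.
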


\begin{proof} Using Lemma~\ref{olosferiche}
it is easy to prove that $\widehat \Lambda$ is entire.
For the second part, we first check that 
for every $\psi$ in $\cS(\R^2)$, the integral $\int_\cfan \widehat \Lambda\, \psi\,d\mu$ 
is absolutely convergent, and therefore $\widehat \Lambda\,\mu$ is in $\cS_0'(\cfan)$.
Indeed, if $(\xi,\la)=\big(|\la|(2j+\dimH),\la\big)$ is in~$\fan$, for some $m$ in $\N$
$$
|\widehat \Lambda(\xi,\la)|=
|\dualH{ \Lambda}{\check{\Phi}_{\xi,\la}}| \le C\,\big\|
{\check\Phi_{\xi,\la}}\,\big\|_{C^m(K)}\ ,
$$
where $K=\supp \Lambda \subset \cfan$.
By Lemma~\ref{stimasferiche}, the function
 $\widehat \Lambda$ is slowly growing on $\Sigma$
and so for every $\psi $ in $\cS(\R^2)$, the integral $\int_\cfan \widehat \Lambda\, \psi\,d\mu$ 
is absolutely convergent.

When $g$ is in $\cS(\R^2)$,
\begin{align*}
\dualR{ \widehat \Lambda\,\mu}{\psi}
&=\int_\cfan \widehat \Lambda(\xi,\la)\, \psi(\xi,\la)\,d\mu\\
&=\frac{1}{(2\pi)^{\dimH+1}}\,
\int_\R\sum_{j=0}^\infty\binom{j+ \dimH-1}j\,\widehat \Lambda\big(|\la|(2j+\dimH),\la\big)
\, \psi\big(|\la|(2j+\dimH),\la\big)\,|\la|^\dimH\,d\la\\
&=\lim_{N\to\infty} \frac{1}{(2\pi)^{\dimH+1}}\,
\int_{-N}^N
\sum_{j=0}^N \binom{j+ \dimH-1}j\, \widehat \Lambda\big(|\la|(2j+\dimH),\la\big)\,
\psi\big(|\la|(2j+\dimH),\la\big)\,|\la|^\dimH\,d\la\\
&=\lim_{N\to\infty}\frac{1}{(2\pi)^{\dimH+1}}\,
\int_{-N}^N\sum_{j=0}^N  \binom{j+ \dimH-1}j\,
\dualH{ \Lambda}{
\check\Phi_{|\la|(2j+\dimH),\la}}\, \psi\big(|\la|(2j+\dimH),\la\big)\,|\la|^\dimH\,d\la\\
&=\lim_{N\to\infty}\frac{1}{(2\pi)^{\dimH+1}}\,
\, \bigdualH{ \Lambda}{\int_{-N}^N\sum_{j=0}^N \, \binom{j+ \dimH-1}j\,
            \check\Phi_{|\la|(2j+\dimH),\la}\,\, 
            \psi\big(|\la|(2j+\dimH),\la\big)\,|\la|^\dimH\,d\la}
\ .
\end{align*}

Since
$$
\lim_{N\to\infty}\frac{1}{(2\pi)^{\dimH+1}}\,
\int_{-N}^N\sum_{j=0}^N \, \check\Phi_{|\la|(2j+\dimH),\la}\, 
\psi\big(|\la|(2j+\dimH),\la\big)\,
|\la|^\dimH\,d\la=(\gel\inv  \psi_{|_\Sigma})\check{\phantom a}
$$
uniformly on compacta and the same holds for all derivatives,
\[
\dualR{ \widehat \Lambda\,\mu}{\psi}
=\dualH{ \Lambda}{(\cG\inv \psi_{|_\Sigma})\check{\phantom a}}
=\dualR{\gel \Lambda}{\psi}\ .\qedhere
\]
\end{proof}

\section{The operators ${M}_\pm$}

Denote by   ${M}_\pm$   the  operators acting on a
smooth function   $\psi$  on $\R^2$ by the rule~\cite{BJR}
 
$$
\begin{aligned}
  M_\pm \psi(\xi,\la)&= \partial_\la \psi(\xi,\la)\mp\dimH
   \partial_\xi \psi(\xi,\la)-2(\dimH\la\pm\xi)
\int_0^1\partial^2_\xi \psi(\xi\pm2\la t,\la )(1-t)\,dt. 
 \\
 &=\frac{1}{\la}\left(\la \partial_\la+\xi\partial_\xi\right)\psi(\xi,\la)-
 \frac{\dimH\la\pm\xi}{2\la^2}\big(\psi(\xi\pm 2\la,\la)-\psi(\xi,\la) \big).
 \end{aligned}
$$
Since $\la \partial_\la+\xi\partial_\xi$ is  the derivative in the radial 
direction,
the operators ${M}_\pm$ depend only on the restriction to  the Heisenberg fan.

The operators $M_\pm$ have the following relevant property. If $f$  is radial  and $(1+\mathcal A )f$ is
integrable   on $\Hn$   then~(see \cite{BJR})
 \begin{equation}\label{Mpm}
 \gel({{{\mathcal A}f)}}=M_+ ( \gel{ f })
 \qquad{\text{ and}}\qquad  \gel({{\bar{\mathcal A}f}})=-M_- ( \gel{ f})
 .
\end{equation}
 One can verify that 
$$
\dualR{M_+(\gel f)\mu}{\gel h}=-\dualR{(\gel f)\mu}{M_-(\gel h)}
\qquad \forall f, h\in \schwkrad.
$$
Hence by Theorem~\ref{nostro}
 
 \begin{equation}\label{Mtrasposto}
\int_\cfan (M_+\vp)\,\psi\, d\mu=
-\int_\cfan \vp\, (M_-\psi)\, d\mu
\qquad \forall \vp,\psi\in \cS(\R^2).
\end{equation}
 
According to~\pref{Mtrasposto},
when $U$ is in $\cS_0'(\cfan)$ we define the distribution $M_+U$ by
$$
\dualR{M_+U}{\psi}
=-\dualR{U}{M_-\psi}
\qquad\forall \psi\in \cS(\R^2)
$$
and similarly 
we define $M_-U$
with $M_+$ and $M_-$ interchanged.

Clearly if $\psi_{|_\cfan}=0$ then $(M_-\psi)_{|_\cfan}=0$, so that $M_+U$ is in $\cS_0'(\cfan)$.

Moreover it is easy to verify that \pref{Mpm} extends to distributions, i.e.
\begin{equation}\label{emmepiu}
 \gel({{{\mathcal A}\Lambda)}}=M_+ ( \gel{ \Lambda })
 \qquad{\text{ and}}\qquad  \gel({{\bar{\mathcal A} \Lambda}})=- M_- ( \gel{ \Lambda})
 \qquad \forall \Lambda \in \cS'(\Hn)
 .
\end{equation}
  Finally, given a distribution in $\cS_0'(\cfan)$
of the form $F\mu$, where $F$ is smooth and slowly growing
on $\R^2$,
we note that for all $\psi$ in $\cS(\R^2)$,
\begin{align*}
\dualR{M_+(F\mu)}{\psi}
&=
-\dualR{F\mu}{M_- \psi}
\\
&=-\int_\cfan F\, M_- \psi\, d\mu
\\
&=\int_\cfan M_+ F\, \psi\, d\mu
=\dualR{(M_+F)\,\mu}{\psi},
\end{align*}
therefore
\begin{equation}
\label{emmepiuF}
M_+(F\mu)=(M_+ F)\,\mu\ .
\end{equation}
 
For later use, we prove the following estimate.

\begin{lemma}\label{potenzeM+}
Let  $a$ be a positive integer, then
for every $\psi$ in 
 $\cD(\R^2)$
  with support in the set 
$ \set{(\xi,\la)\in \R^2\,\,:\,\, |\xi|\leq \suppfan}$
$$
\|M_\pm^a \psi \|_{L^1(\cfan)}\leq C_a\, (1+\suppfan)^{a+\dimH+2}
\sum_{s,r=0}^{2a}\|\partial_\la^s\partial_\xi^r \psi \|_{L^\infty(\R^2)}. 
$$
\end{lemma}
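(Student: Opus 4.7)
The plan is to bound $\|M_\pm^a\psi\|_{L^1(\cfan)}$ by combining a pointwise estimate of $M_\pm^a\psi$ with a bound on the $\mu$-measure of its support in $\cfan$. I would treat $M_+^a$ in detail; the case of $M_-^a$ is symmetric.

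First, by induction on $a$ using the first (integral) form of $M_+$, I would establish that $M_+^a\psi$ is a finite sum of terms
$$T_\alpha\psi(\xi,\la)=P_\alpha(\xi,\la)\int_{[0,1]^{N_\alpha}}(\partial_\xi^{r_\alpha}\partial_\la^{s_\alpha}\psi)\bigl(\xi+2\la\,\tau_\alpha(\vec t),\,\la\bigr)\,B_\alpha(\vec t)\,d\vec t,$$
where $P_\alpha$ is a polynomial of degree at most $a$ (in fact a product of factors of the form $\dimH\la+\xi+2\la\sigma_i$), $r_\alpha+s_\alpha\le 2a$, $N_\alpha\le a$, $\tau_\alpha(\vec t)\in[0,a]$, and $B_\alpha$ is bounded on the unit cube. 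The inductive step is routine: $\partial_\la$ or $-\dimH\partial_\xi$ raises a derivative by one, while the integral part $-2(\dimH\la+\xi)\int_0^1\partial_\xi^2(\cdot)(\xi+2\la t,\la)(1-t)\,dt$ adds one polynomial factor, one integration variable, and two extra $\xi$-derivatives. This yields the crude pointwise bound $|M_+^a\psi(\xi,\la)|\le C_a(1+|\xi|+|\la|)^a\sum_{r+s\le 2a}\|\partial_\xi^r\partial_\la^s\psi\|_\infty$.

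Next—the heart of the argument—I would analyze the support of $M_+^a\psi$ on $\cfan$ via the fan-compatible \emph{second} form of $M_+$. On a fan point $(\xi,\la)=(|\la|(2j+\dimH),\la)$ the shift $\xi\mapsto\xi+2\la$ sends fan index $j$ to $j+\sgn\la$, and the coefficient $(\dimH\la+\xi)/(2\la^2)$ vanishes precisely at the exceptional case $j=0$, $\la<0$, where $j+\sgn\la=-1$ would lie outside $\N$. Iterating, $M_+^a\psi\big|_\cfan(j,\la)$ is therefore a finite linear combination of $\la$-derivatives of $\tilde\psi(j+\sgn(\la)\ell,\la)=\psi(|\la|(2(j+\sgn(\la)\ell)+\dimH),\la)$ with $\ell\in\{0,\ldots,a\}$ and the shifted index $\ge 0$. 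For this to be nonzero, at least one shifted fan point must lie in $\supp\psi$, which gives $|\la|(2(j+\sgn(\la)\ell)+\dimH)\le\rho$; combined with $\xi\ge\dimH|\la|$ on $\cfan$, this forces $|\la|\le\rho/\dimH$ and $\xi\le(1+2a/\dimH)\rho$ on the support.

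Finally, on $\supp M_+^a\psi\cap\cfan$ the pointwise bound from Step 1 becomes $|M_+^a\psi|\le C_a(1+\rho)^a\sum_{r+s\le 2a}\|\partial_\xi^r\partial_\la^s\psi\|_\infty$. Splitting the Plancherel integral into the regimes $\la>0$, $\la<0$ with $j\le a$, and $\la<0$ with $j>a$, and integrating $|\la|^\dimH$ over the admissible range $|\la|\lesssim\rho/(2j+\dimH-2a)_+$, the series $\sum_j\binom{j+\dimH-1}{j}/(2j+\dimH)^{\dimH+1}\sim\sum 1/j^2$ converges, giving $\mu(\supp M_+^a\psi\cap\cfan)\le C_a\rho^{\dimH+1}$. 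Multiplying yields $\|M_+^a\psi\|_{L^1(\cfan)}\le C_a(1+\rho)^a\rho^{\dimH+1}\|\psi\|_{C^{2a}}$, which is dominated by $C_a(1+\rho)^{a+\dimH+2}\sum_{r+s\le 2a}\|\partial_\xi^r\partial_\la^s\psi\|_\infty$. The main obstacle is the support step: a naive approach using only the first form would estimate the support of each $T_\alpha\psi$ by $\{|\xi+2\la s|\le\rho\text{ for some }s\in[0,a]\}$, a set that is unbounded in $|\la|$ for $\la<0$ when $2j+\dimH\le 2a$, and would make the integral divergent termwise. The resolution is to exploit the cancellation in the prefactors $(\dimH\la+\xi+2\la\sigma)$ at the exceptional fan points—equivalently, the fact that in the second form the shift stays within $\cfan$—which is what confines the actual support of $M_+^a\psi$ on $\cfan$.
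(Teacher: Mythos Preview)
Your argument is correct. The overall strategy—combine a pointwise bound on $M_+^a\psi$ with control of its support on $\cfan$—is the same as the paper's, but the execution differs.

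The paper invokes a structural identity from \cite{ADR2},
\[
M_+^a=V_0^a+\sum_{k=1}^a\eta_0\cdots\eta_{k-1}\,D_{k,a},
\]
where $\eta_j(\xi,\la)=\xi+(2j+\dimH)\la$, $V_j=\partial_\la-(2j+\dimH)\partial_\xi$, and $D_{k,a}$ is a polynomial of degree $a$ in $V_0,\dots,V_k,W$ containing $W$ exactly $k$ times. The cancellation you isolate is packaged there in the prefactors $\eta_0\cdots\eta_{k-1}$. The paper then bounds $\|D_{k,a}\psi\|_{L^\infty(\R^2)}$ by a short induction and passes to $L^1(\cfan)$ via the weight inequality $\|\phi\|_{L^1(\cfan)}\le C\|(1+|\xi|)^{\dimH+2}\phi\|_{L^\infty(\R^2)}$.

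Your route avoids the external lemma: a direct induction on the integral form of $M_+$ gives the crude pointwise bound, and the support on $\cfan$ is read off from the \emph{difference} form, interpreted as the shift $j\mapsto j+\sgn\la$ on the fan index (with the coefficient vanishing at $j=0$, $\la<0$ keeping the shift inside $\N$). This makes the fan-compatibility explicit and replaces the weighted-$L^\infty$ device by a direct computation of $\mu\big(\supp M_+^a\psi\cap\cfan\big)$. Both approaches yield the stated estimate; yours is more self-contained, while the paper's is shorter once the result of \cite{ADR2} is available. One minor remark: your parenthetical that each $P_\alpha$ is literally a product of factors $\dimH\la+\xi+2\la\sigma_i$ is not preserved once you differentiate such factors in the induction, but only the degree bound $\deg P_\alpha\le a$ is actually used, and that does hold.
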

\begin{proof} 
It is enough to prove the statement for $M_+$, since   
$M_-\check\psi = -\left[M_+\psi \right]\!\!\check{\phantom f}$, where
$\check\psi(\xi,\la)=\psi(\xi,-\la)$. 
 Let 
 $W$ denote the operator acting on  a smooth function $\psi $ on $\R^2$ by
 $$
 W \psi(\xi,\la) = 2\int_0^1\partial^2_\xi \psi(\xi+2\la t,\la )(1-t)\,dt.
 $$
For every $ j\geq 0$ let $\eta_j$ be the function 
and let  $V_j$  be the operator defined by
$$
\eta_j(\xi,\la)=\xi+(2j+\dimH)\la
\qquad V_j= \partial_\la-(2j+\dimH) \partial_\xi.
$$  
With this notation $M_+=V_0-\eta_0W$. 
Moreover, as proved in~\cite[{Lemma 4.5}]{ADR2}, for every positive integer $a$,
\begin{equation}
M_+^a=V_0^a+\sum_{k=1}^a \eta_0\cdots \eta_{k-1}\, D_{k,a},
\end{equation}
where  $D_{k,a}$ is a polynomial  in $V_0,\ldots,V_{k},W$ of degree $a$ 
such that in each monomial the operator $W$ appears  $k$ times.

Let $\psi $ be in   $\cD(\R^2)$
 with support in the set 
$ \set{(\xi,\la)\in \R^2\,\,:\,\, |\xi|\leq \suppfan}$.
Then it is easy to see that 
 $\supp D_{k,a} \psi\subseteq  \set{(\xi,\la)\in \R^2\,\,:\,\, |\xi|\leq c\,\suppfan}$, 
 with $c$ depending on $a$. Therefore, using~\pref{integrabilita},
  \begin{align*}
\|  M_+^a \psi\|_{L^1(\cfan)}&
 \leq
 \|V_0^a \psi\|_{L^1(\cfan)}+\sum_{k=1}^a \| \eta_0\cdots \eta_{k-1}\, D_{k,a} \psi\|_{L^1(\cfan)}
  \\
 &\leq
 C_a\, (1+\suppfan)^{a+n+2}\left(
\sum_{r+s\leq a} \|\partial_\la^s\partial_\xi^r \psi \|_{L^\infty(\R^2)}
+
\,\sum_{k=1}^a \| D_{k,a} \psi\|_{L^\infty(\R^2)}
\right).
\end{align*}

We complete the proof by showing that 
$$
\|D_{k,a} \psi \|_{L^\infty(\R^2)}\leq C_a\, 
\sum_{s+r\leq 2a}\|\partial_\la^s\partial_\xi^r \psi \|_{L^\infty(\R^2)}
\qquad k=1,2,\ldots,a,
$$
by induction on $a$. Indeed, the case $a=1$ is trivial since
$$
\|W \psi \|_{L^\infty(\R^2)}\leq 
2\int_0^1\|\partial^2_\xi \psi \|_{L^\infty(\R^2)}(1-t)\,dt
\leq C\, 
\|\partial_\xi^2 \psi \|_{L^\infty(\R^2)} .
$$
If $a>1$ then either $D_{k,a}=D_{k-1,a-1}\,W$ or $D_{k,a}=D_{k,a-1}\,V_j$, 
for some $j$ and $k\leq a-1$. The second case is trivial.
If $D_{k,a}=D_{k-1,a-1}\,W$, we note that  by induction on $s$ it is easy to verify that 
$$
\partial_\la^s\partial_\xi^r W \psi =
 2\sum_{k=0}^{s}\binom{s}{k}
 \int_0^1 \partial_\la^{s-k}\partial_\xi^{r+2+k}\, \psi(\xi+2\la t,\la )\,
 (2t)^{k}\,(1-t)\,dt.
$$
Therefore
  \begin{align*}
\|D_{k-1,a-1}\,W \psi \|_{L^\infty(\R^2)}&\leq C_a\, 
\sum_{s+r\leq 2a-2}\|\partial_\la^s\partial_\xi^r W \psi \|_{L^\infty(\R^2)}
\\ &\leq C_a\, 
\sum_{s+r\leq 2a-2}\sum_{k=0}^{s}\|\partial_\la^{s-k}\partial_\xi^{r+2+k} \psi \|_{L^\infty(\R^2)}
\\ &\leq C_a\, 
\sum_{s+r\leq 2a}\|\partial_\la^s\partial_\xi^r   \psi \|_{L^\infty(\R^2)}
.\qedhere
\end{align*}
\end{proof}

\bigskip

\section{Real Paley--Wiener results for the spherical transform}

Suppose that  $\Lambda$ is a radial tempered distribution on $\Hn$.
Motivated by~\cite{Nils}, we define 
$R(\Lambda)$  in $[0,\infty]$ by
$$
R(\Lambda)=
{\rm max}\set{|x|\, :\, x\in {\rm supp}\, \Lambda}
$$
and we call $R(\Lambda)$ the radius of the support of the distribution~$\Lambda$.

The purpose of this section is to prove real Paley--Wiener Theorems for the spherical transform;
we start with a characterization of compactly supported radial
distributions and then we specialize these results to square integrable
radial functions and Schwartz radial functions. 
When a distribution $U$ on $\R^2$ is of the form $U=F_U\,\mu$ with $F_U$
a (smooth)  
 function on $\R^2$, by abuse of notation 
we shall also denote by $U$ the associated function $F_U$.

Our first characterization reads as follows.

\begin{theorem}\label{unoequiv}
Let $\Lambda$ be in $\schwkradp$. The following conditions are equivalent.
\begin{enumerate} 
\item
$R(\Lambda)$ is finite;

\item  $\gel \Lambda $ is the restriction to $\cfan$ of
  a smooth function on $\R^2$ and for every $p$ in $[1,\infty]$ there exists $\beta>0$ such that
$$
\limsup_{j\to\infty}
\|(1+\xi)^{-\beta}\, M_+^j \gel \Lambda\|_{L^p(\cfan)}^{1/j}<\infty;
$$

\item 
 for every large $j$  the distribution
$  M_+^j \gel \Lambda$ 
is the restriction to $\cfan$ of
  a smooth function on $\R^2$
     and
       there exist $\beta>0$ and $p$ in $[1,\infty]$
such that
$$
\liminf_{j\to\infty} 
\|(1+\xi)^{-\beta}\, M_+^j \gel \Lambda\|_{L^p(\cfan)}^{1/j}
<\infty .
$$

\end{enumerate}
Moreover, if any of these conditions is satisfied, then   for every $p$ in $[1,\infty]$ there exists $\beta>0$ such that
\begin{equation}\label{Rspettr}
\lim_{j\to\infty} 
\|(1+\xi)^{-\beta}\, M_+^j\gel \Lambda\|_{L^p(\cfan)}^{1/j}= R(\Lambda)^2.
\end{equation}
\end{theorem}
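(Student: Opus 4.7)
The cornerstone is the intertwining identity $M_+^j\gel\Lambda=\gel(\mathcal{A}^j\Lambda)$, obtained by iterating \eqref{emmepiu}, together with $|\mathcal{A}(x)|=|x|^2$, which implies $\supp(\mathcal{A}^j\Lambda)\subseteq\supp\Lambda$. I would argue around the cycle $(1)\Rightarrow(2)\Rightarrow(3)\Rightarrow(1)$, and the limit formula \eqref{Rspettr} will drop out as a by-product. For $(1)\Rightarrow(2)$, set $R=R(\Lambda)<\infty$; then $\mathcal{A}^j\Lambda$ has compact support, so Proposition~\ref{distolo} gives the entire (in particular smooth) representative
$$
\widehat{\mathcal{A}^j\Lambda}(\xi,\lambda)=\dualH{\Lambda}{\mathcal{A}^j\check\Phi_{\xi,\lambda}}
$$
of $M_+^j\gel\Lambda=\widehat{\mathcal{A}^j\Lambda}\,\mu$. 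Since $\Lambda$ has finite order $m$ on its compact support, Leibniz expansion and Lemma~\ref{stimasferiche} yield, on $\cfan$,
$$
\bigl|\widehat{\mathcal{A}^j\Lambda}(\xi,\lambda)\bigr|\leq C\,j^{m}R^{2j}(1+\xi)^{m/2}.
$$
For each fixed $p\in[1,\infty]$, choosing $\beta$ large enough that $(1+\xi)^{m/2-\beta}\in L^p(\cfan)$ (by \eqref{integrabilita}) gives the $\limsup$ bound $\leq R^2$, so (2) holds. The implication $(2)\Rightarrow(3)$ is trivial since $\liminf\leq\limsup$.

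The crux is $(3)\Rightarrow(1)$, which I would prove in the sharp form $R(\Lambda)^2\leq L$, where $L$ is the liminf in (3). Suppose for contradiction that $R(\Lambda)>\sqrt L$ and pick $r$ with $\sqrt L<r<R(\Lambda)$. By the definition of support for a radial distribution one finds a radial $\phi\in\crad(\Hn)$ with $\supp\phi\subseteq\{|x|\geq r\}$ and $\dualH{\Lambda}{\phi}\neq0$. Since $\phi$ vanishes near the origin, $\mathcal{A}^{-j}\phi\in\crad(\Hn)$ and
$$
\dualH{\Lambda}{\phi}=\bigdualH{\mathcal{A}^j\Lambda}{\mathcal{A}^{-j}\phi}=\int_{\cfan}F_j\cdot\gel\bigl((\mathcal{A}^{-j}\phi)^{\check{\phantom a}}\bigr)\,d\mu,
$$
where $F_j$ is the smooth representative of $M_+^j\gel\Lambda$ supplied by (3) (for large $j$), and the last identity unpacks \eqref{tildeT}. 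H\"older's inequality with weight $(1+\xi)^{\pm\beta}$, combined with the liminf hypothesis, bounds the first factor by $(L+\eps)^{j_n}$ along a subsequence $j_n\to\infty$. For the second factor I would use $\gel(L^K g)=\xi^K\gel g$ and the elementary estimate $\|D^\al\mathcal{A}^{-j}\|_{L^\infty(\supp\phi)}\leq C_\al j^{|\al|}r^{-2j}$, together with \eqref{integrabilita}, to obtain a bound $C_K j_n^{2K}r^{-2j_n}$ for $K$ large enough (depending on $\beta$ and $p'$). Taking $1/j_n$-th roots and letting $n\to\infty$ forces $1\leq(L+\eps)/r^2$, contradicting $r^2>L$.

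Combining $\limsup\leq R(\Lambda)^2$ from $(1)\Rightarrow(2)$ with $R(\Lambda)^2\leq\liminf$ from $(3)\Rightarrow(1)$ gives the limit \eqref{Rspettr}. The main obstacle is the second step: extracting a usable radial cutoff from the abstract notion of support for a radial distribution, and balancing the $j$-dependence of the two H\"older factors — in particular controlling the polynomial-in-$j$ losses incurred when differentiating $\mathcal{A}^{-j}$, while still obtaining a finite weighted $L^{p'}(\cfan)$-norm via \eqref{integrabilita}.
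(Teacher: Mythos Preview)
Your proposal is correct and follows essentially the same route as the paper: the paper isolates the $(1)\Rightarrow(2)$ bound as Proposition~\ref{puntuale} (Leibniz on $g\,\mathcal{A}^j\Phi_{\xi,-\lambda}$ plus Lemma~\ref{stimasferiche}), the $(3)\Rightarrow(1)$ direction as Proposition~\ref{due} (pairing against $f_j=\bar{\mathcal{A}}^{-j}f$ with $f$ supported in an annulus near $R(\Lambda)$, then H\"older), and packages the $L^{p'}$ control of $\gel f_j$ you sketch as Lemma~\ref{lemmafj}. The only cosmetic difference is that the paper phrases $(3)\Rightarrow(1)$ as the direct inequality $\liminf\geq R(\Lambda)^2$ rather than by contradiction, and uses $\bar{\mathcal{A}}^{-j}$ (equivalently your $(\mathcal{A}^{-j}\phi)^{\check{\phantom a}}$) in the bookkeeping.
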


Since $M_-\psi = -\left[M_+\check\psi \right]\check{\phantom f}$, where
$\check\psi(\xi,\la)=\psi(\xi,-\la)$,
 we have also a corresponding analogue
 with $M_-$ in place of $M_+$.  

 The proof of Theorem~\ref{unoequiv} is given after some preliminary results,
 the first of which is the following technical lemma.

 \begin{lemma}\label{lemmafj}
Let $R>0$ and $j$ be a positive integer.
 Suppose that $f$ is a smooth function on $\Hn$
with compact support in the set $\{x\in \Hn\,:\, |x|>R\}$
and let $f_j=\bar\cA^{-j}\, f$.\\
Then for every $N$ in $\N$  
\begin{align*}
\|(1+\xi)^N\, \gel f_j\|_{L^\infty(\cfan)}
&\leq
 C_{N}\,j^{2N}\,R^{-2j}\, 
\max_{h\leq N}
\sum_{{k+\deg J=2h}}
\|{\bar\cA}^{-k}\,{\op D}^{J} f\|_{L^1(\Hn)}
\end{align*}
\end{lemma}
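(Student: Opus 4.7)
The plan is to reduce the bound to an $L^1$ estimate for $(1+L)^N f_j$ on the group. Since $\check\Phi_{\xi,\la}$ is an eigenfunction of the (formally self-adjoint) sublaplacian $L$ with eigenvalue $\xi$, integration by parts in the integral defining $\gel f_j$ gives $(1+\xi)^N\gel f_j=\gel\bigl((1+L)^N f_j\bigr)$ on $\cfan$. Combined with $|\check\Phi_{\xi,\la}|\le 1$ on $\cfan$ this yields
$$
\bigl\|(1+\xi)^N\gel f_j\bigr\|_{L^\infty(\cfan)}\le\bigl\|(1+L)^N(\bar\cA^{-j}f)\bigr\|_{L^1(\Hn)},
$$
and expanding $(1+L)^N=\sum_{h=0}^N\binom{N}{h}L^h$ the problem reduces to bounding $\|L^h(\bar\cA^{-j}f)\|_{L^1(\Hn)}$ for each $h\le N$.

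The next step is a Leibniz-type identity
$$
L^h(\bar\cA^{-j}f)=\sum_{\deg I+\deg J=2h}c_{h,I,J}\bigl({\op D}^I\bar\cA^{-j}\bigr)\bigl({\op D}^J f\bigr),
$$
obtained by iterating the first-order product rule for $Z_k$, $\bar Z_k$, $T$ and writing the result in the basis \eqref{monomi}. The heart of the proof is the pointwise bound
$$
|{\op D}^I\bar\cA^{-j}(x)|\le C_{\deg I}\,j^{\deg I}\,|x|^{-2j-\deg I},
$$
which I would establish by induction on $\deg I$, using the identities $Z_k\bar\cA=0$, $\bar Z_k\bar\cA=z_k/2$ and $T\bar\cA=-i$. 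These show that ${\op D}^I\bar\cA^{-j}$ is a finite sum of terms of the form $p(j)z^{\mathbf a}\bar z^{\mathbf b}\bar\cA^{-m}$ with $p$ a polynomial in $j$ of degree at most $\deg I$. Since the whole expression is homogeneous of degree $-2j-\deg I$ under the Heisenberg dilations and $|z|\le 2|x|$, the pointwise bound follows by evaluating on $\{|x|=1\}$ and extending by homogeneity.

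To conclude, on $\supp f\subset\{|x|>R\}$ we split $|x|^{-2j-\deg I}\le R^{-2j}|x|^{-\deg I}$. Using $|\bar\cA(x)|=|x|^2$, we have $|x|^{-\deg I}=|\bar\cA|^{-\deg I/2}$ when $\deg I$ is even, while for $\deg I$ odd the pointwise decomposition
$$
|x|^{-\deg I}\le|\bar\cA|^{-(\deg I-1)/2}+|\bar\cA|^{-(\deg I+1)/2}
$$
(obtained by separating the regions $|x|\ge 1$ and $|x|\le 1$) reduces matters to integer powers of $\bar\cA$. Integrating against $|{\op D}^J f|$ then bounds each Leibniz term by $C_h\,j^{2h}R^{-2j}\|\bar\cA^{-k}{\op D}^J f\|_{L^1(\Hn)}$ for an integer $k$ with $k+\deg J\le 2h$. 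Summing over the finitely many Leibniz indices, bounding $j^{\deg I}\le j^{2N}$, and taking the maximum over $h\le N$ yields the claim.

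The main obstacle is the pointwise estimate on ${\op D}^I\bar\cA^{-j}$: one has to carry the normal form $p(j)z^{\mathbf a}\bar z^{\mathbf b}\bar\cA^{-m}$ through the induction with bookkeeping sharp enough to keep the degree of $p$ in $j$ at most $\deg I$, and to exploit the property $Z_k\bar\cA=0$ so that derivatives in the $Z_k$ directions do not contribute spurious growth in $j$.
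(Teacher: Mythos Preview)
Your approach coincides with the paper's: both reduce to $\|(I+L)^Nf_j\|_{L^1}$ via $(1+\xi)^N\gel f_j=\gel\big((I+L)^Nf_j\big)$ and the bound $|\Phi_{\xi,\la}|\le1$ on $\cfan$, expand by Leibniz, and estimate ${\op D}^I\bar\cA^{-j}$ pointwise to extract the factor $j^{2N}R^{-2j}$. The paper records the pointwise bound more tersely as $|{\op D}^I\bar\cA^{-j}|\le C\,j^{|I|}\,|\bar\cA^{-j-\deg I}|$ and then splits off $|\bar\cA|^{-j}\le R^{-2j}$, landing directly on $k=\deg I$ so that $k+\deg J=2h$ exactly as in the statement.

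There is one bookkeeping slip in your final step. Your homogeneous estimate $|{\op D}^I\bar\cA^{-j}(x)|\le C\,j^{\deg I}|x|^{-2j-\deg I}$ is correct, but converting $|x|^{-\deg I}$ to integer powers of $|\bar\cA|$ produces $k=\deg I/2$ (or $(\deg I\pm1)/2$); consequently your terms satisfy $k+\deg J\approx 2h-\deg I/2$, which is generally strictly smaller than $2h$ and can be odd, so they do not all appear in the sum $\sum_{k+\deg J=2h}$ of the lemma. The easy fix is to exploit the compactness of $\supp f$: since $|\bar\cA|$ is bounded above there, one has $|x|^{-\deg I}=|\bar\cA|^{-\deg I/2}\le C\,|\bar\cA^{-\deg I}|$ on $\supp f$, which gives $k=\deg I$ and hence $k+\deg J=2h$ exactly. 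This is precisely what the paper does (implicitly); the resulting constant then depends on the diameter of $\supp f$, which is harmless for the only application of the lemma (Proposition~\ref{due}), where $f$ is fixed and one lets $j\to\infty$.
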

 \begin{proof} Note that since $f$ is supported away from the origin,
the function $f_j= \bar\cA^{-j}\,f$ is again smooth and compactly supported.
Moreover,
\begin{align*}
\|(1+\xi)^{N}\,\gel{f_j}\|_{L^{\infty}(\cfan)}
&= 
\|\gel\bigl({(I+L)^{N}f_j}\bigr)\|_{{L^{\infty}(\cfan)}}
\\
&\leq 
\|(I+L)^Nf_j \|_{L^1(\Hn)}
.
\end{align*}
Clearly $(I+L)^Nf_j=\sum\binom{M}{h}L^hf_j$ and
by the Leibniz rule
  
\begin{align*}
\|(I+L)^Nf_j \|_{L^1(\Hn)}
&
\leq C_N \,\max_{h\leq N}\|L^hf_j\|_{L^1(\Hn)}
\\
&\leq C_N\,
\max_{h\leq N}
\sum_{{\deg I+\deg J=2h}}
\|({\op D}^I {\bar\cA}^{-j})\,({\op D}^{J} f) \|_{L^1(\Hn)}
\\
&\leq C_N\,
\max_{h\leq N}
\sum_{{\deg I+\deg J=2h}} j^{|I|}
\|({\bar\cA}^{-j-\deg I})\,({\op D}^{J} f) \|_{L^1(\Hn)}
\\
&\leq C_N\,j^{2N}\,R^{-2j}\, 
\max_{h\leq N}
\sum_{{\deg I+\deg J=2h}}
\|{\bar\cA}^{-\deg I}\,{\op D}^{J} f\|_{L^1(\Hn)}.
\qedhere
\end{align*}
\end{proof}

Now we note that the spherical transform of radial compactly supported distributions
satisfies a pointwise estimate on the Heisenberg fan $\cfan$.

\begin{proposition}
\label{puntuale}
Let $\Lambda$ be a radial compactly supported distribution of order $N$ on $\Hn$. Then for every    $R>R(\Lambda)$
there exists a constant $C=C_R>0$ 
 such that for every $j$ in $\N$
  \begin{equation}\label{claim}
|M_+^j \widehat \Lambda(\xi,\la)|\leq C\,  
 R^{2j}\, (1+\xi)^{N/2}
\qquad
\forall (\xi,\la)\in\cfan .
\end{equation}
\end{proposition}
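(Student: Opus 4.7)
The plan is to recognize $M_+^j\widehat\Lambda$ on the fan as the spherical transform of the compactly supported radial distribution $\mathcal A^j\Lambda$, and then bound the latter by an order-$N$ duality estimate together with Lemma~\ref{stimasferiche}.

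First I would establish the pointwise identity $M_+^j\widehat\Lambda=\widehat{\mathcal A^j\Lambda}$ on $\cfan$. Since $\mathcal A$ is $\Un$-invariant, $\mathcal A^j\Lambda$ is a radial compactly supported distribution of order at most $N$, so Proposition~\ref{distolo} gives $\gel(\mathcal A^j\Lambda)=\widehat{\mathcal A^j\Lambda}\,\mu$. On the other hand, iterating \eqref{emmepiu} and applying \eqref{emmepiuF} at each step---legitimate because each intermediate function is smooth and, by Lemma~\ref{stimasferiche}, slowly growing on $\cfan$---yields $\gel(\mathcal A^j\Lambda)=M_+^j\gel\Lambda=(M_+^j\widehat\Lambda)\,\mu$. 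Equating these two representations in $\cS_0'(\cfan)$, and using continuity of both sides together with density of $\supp\mu$ in $\cfan$, upgrades equality of distributions to pointwise equality. The standard action of the smooth multiplier $\mathcal A^j$ on a distribution then rewrites this as
$$M_+^j\widehat\Lambda(\xi,\la)=\dualH{\Lambda}{\mathcal A^j\check\Phi_{\xi,\la}}.$$

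For the estimate, given any $R>R(\Lambda)$ I would pick $R'\in(R(\Lambda),R)$ and set $K=\{x\in\Hn:|x|\le R'\}$, a compact neighborhood of $\supp\Lambda$ on which $|\mathcal A(x)|=|x|^2\le(R')^2$. The order-$N$ bound for $\Lambda$ then controls the pairing by $C\sum_{\deg I\le N}\sup_K|{\op D}^I(\mathcal A^j\check\Phi_{\xi,\la})|$. Leibniz splits each term into $({\op D}^{I_1}\mathcal A^j)({\op D}^{I_2}\check\Phi_{\xi,\la})$. Iterating $Z_i(\mathcal A^j)=j\mathcal A^{j-1}Z_i\mathcal A$ (and analogously for $\bar Z_i$ and $T$) bounds the first factor by $C_N j^N(R')^{2j}$ on $K$; using the explicit formula \eqref{Phi} together with \eqref{lebedev} one verifies $\check\Phi_{\xi,\la}=\Phi_{\xi,-\la}$, so Lemma~\ref{stimasferiche} bounds the second factor by $C_N(1+\xi)^{N/2}$. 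Assembling gives $|M_+^j\widehat\Lambda(\xi,\la)|\le C_N j^N(R')^{2j}(1+\xi)^{N/2}$, and since $R>R'$ the polynomial factor is absorbed via $j^N(R')^{2j}\le C_{N,R,R'}R^{2j}$, yielding~\eqref{claim}.

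The hard part will be the first step: checking that the distributional intertwining in \eqref{emmepiu}--\eqref{emmepiuF}, originally formulated for measures weighted by $\mu$, can indeed be iterated and upgraded to an honest pointwise equality of continuous functions on $\cfan$. Once this identification is secured, the remaining estimate is a routine combination of Leibniz and Lemma~\ref{stimasferiche}.
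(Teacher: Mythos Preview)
Your proposal is correct and follows essentially the same route as the paper: identify $M_+^j\widehat\Lambda=\widehat{\cA^j\Lambda}$ via \eqref{emmepiu}--\eqref{emmepiuF}, rewrite the pairing as $\dualH{\Lambda}{\cA^j\check\Phi_{\xi,\la}}$, then combine the order-$N$ estimate with Leibniz and Lemma~\ref{stimasferiche}. The only cosmetic difference is that the paper inserts an explicit radial cutoff $g$ equal to $1$ on $\supp\Lambda$ and vanishing outside $\{|x|\le R_1\}$, so that the $C^N$ seminorm is taken over all of $\Hn$ against a compactly supported test function, whereas you take the supremum directly over the compact neighborhood $K$; both are equivalent formulations of the same estimate.
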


\begin{proof} We have already proved in Proposition~\ref{distolo}
that $\gel\Lambda=\widehat\Lambda\, \mu$
and that $\widehat\Lambda$ extends to an entire function, so 
$\widehat\Lambda$ is in  $C^\infty(\cfan)$.
Moreover by equations~\pref{emmepiu}
and~\eqref{emmepiuF}
\begin{align*}
(M_+^j\widehat\Lambda)\, \mu
&=M_+^j(\widehat\Lambda\, \mu) 
=M_+^j\gel\Lambda 
=\gel({\cA^j \Lambda}) 
=\widehat{\cA^j \Lambda}\,\mu,
\end{align*}
therefore $M_+^j\widehat\Lambda=\widehat{\cA^j \Lambda}$.

Let
 $R>R(\Lambda)$ and choose $R_1$ such that $R>R_1>R(\Lambda)$. 
Suppose 
that $g$ is a radial test function on $\Hn$ such that $g(x)=1$
when $x$ is in the support of $\Lambda$ and $g(x)=0$ if $|x|> R_1$.
Then for all $(\xi,\la)$ in $\fan$,
\begin{align*}
|M_+^j \widehat \Lambda(\xi,\la)|
&=| \widehat {\cA^j \Lambda}(\xi,\la)|
=| \widehat {g\,\cA^j\Lambda}(\xi,\la)|
\\
&=|\dualH{g\,\cA^j\Lambda}{\check{\Phi}_{\xi,\la}} |
\\
&=| \dualH{\Lambda}{g\,\cA^j\Phi_{\xi,-\la}} |
\\
&\leq C\, \sum_{\deg I\leq N} 
\| {\op D}^I (g\,\cA^j\Phi_{\xi,-\la})\|_{L^\infty(\Hn)}.
\end{align*}
We conclude 
by the Leibniz rule
and Lemma~\ref{stimasferiche} that
$$
|M_+^j \widehat \Lambda(\xi,\la)|\leq C\, (1+j)^N\, R_1^{2j}\, (1+\xi)^{N/2}
\qquad
\forall (\xi,\la)\in\fan,
$$
which, since $R>R_1$ and by the smoothness of $\widehat\Lambda$, implies~\pref{claim}.
\end{proof}

\bigskip
Conversely, it is easy to deduce that a radial tempered distribution is compactly
supported
when a certain limit is finite.
\begin{proposition}
\label{due}
Let $\Lambda$ be in $\schwkradp$.   
Suppose that there exists  $J$ in $\N$ such that
  for every 
 $j\geq J$ the distribution $ M_+^j\gel \Lambda$ is of the form $G_j\,\mu $, where $G_j$ is 
 a locally integrable function with respect to $\mu$. Then for every $N$ in $\N$
and every $p$ in $[1,\infty]$
$$
\liminf_{j\to\infty} 
\|\,(1+\xi)^{-N
}\, G_j\,\,\|_{L^p(\cfan)}^{1/j}
\geq R(\Lambda)^2.
$$ 
\end{proposition}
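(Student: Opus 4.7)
The plan is to argue by contraposition: fix any $r<R(\Lambda)$ and show that $r^{2}\le \liminf_{j\to\infty}\|(1+\xi)^{-N}G_j\|_{L^p(\cfan)}^{1/j}$ for every $N$ and $p$. Since $R(\Lambda)$ is the radius of the support of $\Lambda$, there exists $x_0\in\supp\Lambda$ with $|x_0|>r$; averaging over $\Un$ a bump function supported near $x_0$ (using that $\Lambda$ is radial and $|\cdot|$ is $\Un$-invariant) produces a radial $f\in\crad(\Hn)$ with $\supp f\subset\set{x\in\Hn:|x|>r}$ and $\dualH{\Lambda}{f}\neq 0$. Since the Kor\'anyi quantity $\cA$ does not vanish on $\{|x|>r\}$, the function $h:=\cA^{-j}f$ is smooth and compactly supported there, and the identity $f=\cA^{j}h$ shifts the pairing onto the distribution:
$$
\dualH{\Lambda}{f}=\dualH{\cA^{j}\Lambda}{h}.
$$
Iterating \pref{emmepiu} and \pref{emmepiuF} gives $\gel(\cA^{j}\Lambda)=M_{+}^{j}\gel\Lambda=G_{j}\mu$ for $j\geq J$, so, using the definition \pref{tildeT} together with a Schwartz extension of $\gel(\check h)\in\cS(\cfan)$ guaranteed by Theorem~\ref{nostro}, the pairing rewrites as
$$
\dualH{\Lambda}{f}=\int_{\cfan} G_{j}\,\gel(\check h)\,d\mu.
$$

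A H\"older estimate with the weight $(1+\xi)^{\pm N}$ then bounds the modulus of the right-hand side by $\|(1+\xi)^{-N}G_{j}\|_{L^{p}(\cfan)}\,\|(1+\xi)^{N}\gel(\check h)\|_{L^{p'}(\cfan)}$. Writing $\check h=\bar\cA^{-j}\check f$ with $\check f$ still supported in $\{|x|>r\}$, Lemma~\ref{lemmafj} yields
$$
\|(1+\xi)^{K}\gel(\check h)\|_{L^{\infty}(\cfan)}\le C_{K,f}\,j^{2K}\,r^{-2j}\qquad\forall K\in\N.
$$
Absorbing a sufficiently heavy weight $(1+\xi)^{-M}$ into the Plancherel integrability estimate \pref{integrabilita} upgrades this to $\|(1+\xi)^{N}\gel(\check h)\|_{L^{p'}(\cfan)}\le C_{N,p,f}\,j^{2N'}\,r^{-2j}$ for some $N'=N'(N,p,\dimH)$. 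Substituting, taking $j$-th roots and the $\liminf$ along a subsequence realizing it, the polynomial factor $j^{2N'/j}$ and the nonzero constant $|\dualH{\Lambda}{f}|^{1/j}$ both tend to $1$, and one obtains $r^{2}\leq\liminf_{j\to\infty}\|(1+\xi)^{-N}G_{j}\|_{L^{p}(\cfan)}^{1/j}$. Letting $r\uparrow R(\Lambda)$ then completes the argument.

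The main technical obstacle is the $L^{\infty}$-to-$L^{p'}$ upgrade: one must absorb enough negative weights $(1+\xi)^{-M}$ for integrability against $\mu$ while guaranteeing that the price paid is only a polynomial factor in $j$, so as not to disturb the decisive exponential rate $r^{-2j}$ supplied by Lemma~\ref{lemmafj}. A smaller but genuine point that deserves care is to justify the integral identity above when $h$ is merely compactly supported, so that $\gel(\check h)$ is not a priori Schwartz on $\R^{2}$; this is handled via the isomorphism $\cS(\cfan)\simeq\cS(\R^{2})/\set{\varphi:\varphi|_{\cfan}=0}$ from Theorem~\ref{nostro}, which supplies a Schwartz extension against which the distribution $G_{j}\mu$ can be tested.
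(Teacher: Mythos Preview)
Your proof is correct and follows essentially the same route as the paper's: choose a radial test function supported away from the origin on which $\Lambda$ does not vanish, peel off $\cA^{j}$ onto the distribution via \pref{emmepiu}, apply H\"older with the $(1+\xi)^{\pm N}$ weight, and control the dual factor by Lemma~\ref{lemmafj} together with the integrability of $(1+\xi)^{N-M}$ in $L^{p'}(\cfan)$. Your presentation is in fact slightly more careful than the paper's in two places: you make explicit the $\Un$--averaging needed to guarantee that the test function is radial, and you spell out why the pairing $\dualR{G_j\mu}{\gel(\check h)}$ is well defined via a Schwartz extension from Theorem~\ref{nostro}.
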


\begin{proof}
Suppose that $R(\Lambda)>0$ and let $0<\eps<R(\Lambda)/2$.
Then  
we may find a smooth function
$f$ with compact support in the set
$$
\{x\in \Hn\, :\, R(\Lambda)-\eps<|x|< R(\Lambda)+\eps \}
$$
 such that
$\dualH{\Lambda}{\check f}\neq 0$.
As in the previous lemma, 
the function~$f$ is supported away from the origin and we let
$f_j= \bar\cA^{-j}\,f$.
By\pref{tildeT}  and\pref{emmepiu}
\begin{align*}
|\dualH{\Lambda}{\check f}|
&=|\dualH{\Lambda}{ \cA^{j}\, \cA^{-j}\,\check f}|
=|\dualH{\Lambda}{ \cA^{j}\,\check f_j}|
\\
&=|\dualH{\cA^{j}\,\Lambda}{\check{f_j}}|
=|\dualR{\gel{(\cA^{j}\,\Lambda)}}{\gel{ {f_j}}}|
\\
&=|\dualR{M_+^{j} \gel{\Lambda}}{\gel{ {f_j}}}|
=|\dualR{G_j\, \mu}{\gel{ {f_j}}}|
\\
&\leq 
\| (1+\xi)^{-N}\,G_j\|_{L^p(\cfan)}\,
\|(1+\xi)^{N}\,\gel{ f_j}\|_{L^{p'}(\cfan)}.
\end{align*}
 In the case where $\| (1+\xi)^{-N}\,G_j\|_{L^p(\cfan)}=\infty$ for all $j$,
 there is nothing to prove. Otherwise, since $|\dualH{\Lambda}{\check{f}}|\neq0$,
$$
\liminf_{j\to\infty}
\| (1+\xi)^{-N}\,G_j \|_{L^p(\cfan)}^{1/j}
\geq \liminf_{j\to\infty}
\left(\frac{|\dualH{\Lambda}{\check{f}}|}{\|(1+\xi)^{N}\,\gel{{f_j}}\|_{L^{p'}(\cfan)}}
\right)^{1/j}
\!= \liminf_{j\to\infty}\|(1+\xi)^{N}\,\gel{f_j}\|_{L^{p'}(\cfan)}^{-1/j}.
$$
Since there exists $M$ in $\N$ such that
$\xi\mapsto (1+\xi)^{N-M}$ is in $L^{p'}(\cfan)$, by Lemma~\ref{lemmafj}
we conclude that
\begin{align*}
\|(1+\xi)^{N}\,\gel{f_j}\|_{L^{p'}(\cfan)}
&\leq 
\|(1+\xi)^{N-M}\|_{L^{p'}(\cfan)}\,
\|
(1+\xi)^{M}\,
\gel{f_j}
\|_{{L^{\infty}(\cfan)}}
\\
&
\leq C\,\,j^{2M}\,(R(\Lambda)-\eps)^{-2j}\ ,
\end{align*}
and the thesis follows easily.

When $R(\Lambda)=\infty$ we use the same arguments to show that \hbox{$\liminf\limits_{j\to\infty}
\| (1+\xi)^{-N}\,G_j \|_{L^p(\cfan)}^{1/j}\geq R$} for every $R>0$.
\end{proof}

Putting together Proposition~\ref{puntuale} and Proposition~\ref{due},
we obtain the following criterion, by which we can measure
the size of the support of a radial compactly supported distribution.

\begin{corollary}
\label{prop68}
Let $\Lambda$ be a radial compactly supported distribution of order $N$. Then 
$$
\lim_{j\to\infty} 
\|(1+\xi)^{-N/2}\, M_+^j\widehat\Lambda\|_{L^\infty(\cfan)}^{1/j}= R(\Lambda)^2.
$$
\end{corollary}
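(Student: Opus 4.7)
The plan is to obtain the identity as a two-sided estimate, extracting the upper bound from Proposition~\ref{puntuale} and the lower bound from Proposition~\ref{due}. No new ingredients are required: the corollary is a packaging of the two preceding propositions once one reconciles the weights.

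For the upper bound, I would fix $R > R(\Lambda)$. Proposition~\ref{puntuale} supplies a constant $C_R$ with
$$|M_+^j \widehat\Lambda(\xi,\la)| \leq C_R\, R^{2j}\,(1+\xi)^{N/2} \qquad \forall (\xi,\la)\in \cfan,\ j\in \N.$$
Dividing by $(1+\xi)^{N/2}$, taking the supremum over $\cfan$, extracting the $j$-th root, and letting $j\to\infty$ gives $\limsup_{j}\|(1+\xi)^{-N/2} M_+^j\widehat\Lambda\|_{L^\infty(\cfan)}^{1/j}\leq R^{2}$. Since $R$ was arbitrary above $R(\Lambda)$, the upper bound $\limsup \leq R(\Lambda)^{2}$ follows.

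For the lower bound, I would first check that the hypothesis of Proposition~\ref{due} is met. By Proposition~\ref{distolo}, $\gel\Lambda = \widehat\Lambda\,\mu$, and combining with \eqref{emmepiuF} yields
$$M_+^j \gel\Lambda \;=\; M_+^j(\widehat\Lambda\,\mu) \;=\; (M_+^j \widehat\Lambda)\,\mu,$$
so $M_+^j \gel\Lambda$ is of the form $G_j\,\mu$ with $G_j = M_+^j \widehat\Lambda$; by Lemma~\ref{olosferiche} these functions are smooth, hence certainly locally integrable with respect to $\mu$. Applying Proposition~\ref{due} with $p=\infty$ and with the integer $M = \lceil N/2 \rceil$ gives $\liminf_{j}\|(1+\xi)^{-M} M_+^j\widehat\Lambda\|_{L^\infty(\cfan)}^{1/j} \geq R(\Lambda)^{2}$. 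Since $\xi \geq 0$ on $\cfan$, one has $(1+\xi)^{-M} \leq (1+\xi)^{-N/2}$ whenever $M \geq N/2$, and so the lower bound persists with $N/2$ in place of $M$. Combining with the upper bound forces the $\limsup$ and $\liminf$ to coincide, and the limit equals $R(\Lambda)^{2}$.

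I do not foresee a genuine obstacle, since the two input propositions are tailored precisely to this statement. The only bookkeeping to be careful about is (i) identifying $G_j$ with $M_+^j\widehat\Lambda$ through \eqref{emmepiuF}, and (ii) the passage from integer to half-integer exponents on the weight $(1+\xi)$, which is handled by the trivial monotonicity noted above. The existence of the limit, which is not asserted in either proposition separately, is automatic once the matching upper and lower bounds are both pinned to $R(\Lambda)^{2}$.
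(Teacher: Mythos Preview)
Your proof is correct and follows essentially the same two-sided estimate as the paper's own proof: the upper bound is extracted from Proposition~\ref{puntuale} and the lower bound from Proposition~\ref{due}. Your explicit verification that $M_+^j\gel\Lambda=(M_+^j\widehat\Lambda)\mu$ and your handling of the possibly half-integer exponent via $M=\lceil N/2\rceil$ and monotonicity of $(1+\xi)^{-\beta}$ are careful pieces of bookkeeping that the paper's terse proof leaves implicit.
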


\begin{proof}
From the pointwise estimate~\pref{claim}, we deduce that for every $R>R(\Lambda)$
$$
\limsup_{j\to\infty}
\|(1+\xi)^{-N/2}\, M_+^j\widehat\Lambda\|_{L^\infty(\cfan)}^{1/j}\leq R^2,
$$
therefore $\displaystyle{\limsup_{j\to\infty}
\|(1+\xi)^{-N/2}\, M_+^j\widehat\Lambda\|_{L^\infty(\cfan)}^{1/j}\leq R(\Lambda)^2}$.
The thesis follows by Proposition~\ref{due}.
\end{proof}

 \begin{proof}[Proof of Theorem~\ref{unoequiv}]
If $\Lambda$ is compactly supported  and 
of order $N$ then  by Proposition~\ref{distolo} it coincides with 
  a smooth slowly growing function $G$ on $\R^2$. If 
$\beta>0$ is such that
$(1+\xi)^{N/2-\beta}$ is in ${L^p(\cfan)}$, we have
\begin{equation}
\label{casop}
\|(1+\xi)^{-\beta}\, M_+^jG\|_{L^p(\cfan)} 
\leq 
\|(1+\xi)^{N/2-\beta} \|_{L^p(\cfan)}
\|(1+\xi)^{-N/2}\, M_+^jG\|_{L^\infty (\cfan)}.
\end{equation}
Hence by  Corollary~\ref{prop68} we have that    (1) $\Rightarrow$ (2).
The implication (2) $\Rightarrow$ (3) is trivial and the implication   
   (3) $\Rightarrow$ (1) is a consequence of Proposition~\ref{due}. 
   Finally \eqref{Rspettr} follows by \eqref{casop},  Proposition~\ref{due} and Corollary~\ref{prop68}.
 \end{proof}

   \subsection{Square-integrable functions}

\begin{theorem}\label{teoL2}\label{tre}
Suppose that for every $j\geq 0$ the function
$M_+^j\psi$ is in $L^2(\cfan)$. Then the function~$f$ such that $\gel f=\psi$
is in $L^2(\Hn)$and
$$
\lim_{j\to\infty} \|\,M_+^j\psi\,\,\|_{L^{2}(\cfan)}^{1/j}= R(f)^2.
$$ 
\end{theorem}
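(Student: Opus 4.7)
The plan is to turn the statement into a classical $L^p$-to-$L^\infty$ limit on a finite measure space, using only the $L^2$ Plancherel formula for $\gel$ together with the intertwining relation \pref{emmepiu}. Applying the hypothesis at $j=0$ gives $\psi=\gel f\in L^2(\cfan,d\mu)$, and Plancherel immediately yields $f\in L^2(\Hn)$.

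For each $j\ge 1$, since multiplication by the polynomial $\cA^j$ preserves $\cS'(\Hn)$, iterating \pref{emmepiu} gives
$$
\gel(\cA^j f)=M_+^j\gel f=M_+^j\psi\qquad\text{in }\cS_0'(\cfan).
$$
The right-hand side lies in $L^2(\cfan,d\mu)$ by hypothesis, so the distribution $\cA^j f$ is the spherical transform of an $L^2$ density; by Corollary~\ref{nostrodist} and Plancherel, $\cA^j f$ is itself an $L^2(\Hn)$ function and
$$
\|M_+^j\psi\|_{L^2(\cfan)}^2=\|\cA^j f\|_{L^2(\Hn)}^2=\int_{\Hn}|(z,t)|^{4j}\,|f(z,t)|^2\,dz\,dt,
$$
where we used $|\cA(z,t)|=|(z,t)|^2$.

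Taking $(2j)$-th roots, introducing the finite positive measure $d\nu=|f|^2\,dz\,dt$, and writing $\phi(z,t)=|(z,t)|^2$, the identity reads $\|M_+^j\psi\|_{L^2(\cfan)}^{1/j}=\|\phi\|_{L^{2j}(d\nu)}$. The measure-theoretic support of $\nu$ coincides with $\supp f$ up to Lebesgue-null sets, so $\|\phi\|_{L^\infty(d\nu)}=R(f)^2$. When $R(f)<\infty$, the classical convergence $\|\phi\|_{L^{p}(d\nu)}\to\|\phi\|_{L^\infty(d\nu)}$ as $p\to\infty$ on a finite measure space closes the argument. When $R(f)=\infty$, for every $R>0$ the set $\{|(z,t)|>R\}$ has strictly positive $\nu$-measure, hence $\|\phi\|_{L^{2j}(d\nu)}\ge R^2\,\nu(\{|(z,t)|>R\})^{1/(2j)}\to R^2$, forcing the limit to be $+\infty=R(f)^2$.

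The only delicate point is the extension of \pref{emmepiu} to $f\in L^2(\Hn)$: a priori $\cA^j f$ need not be integrable, so the identity must be read in the distributional framework of Section~5, and it is the hypothesis $M_+^j\psi\in L^2(\cfan,d\mu)$ that retroactively guarantees that $\cA^j f$ is a genuine $L^2$ function. Beyond this bookkeeping, the remainder is routine.
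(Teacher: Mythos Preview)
Your proof is correct and takes a somewhat more direct route than the paper. Both arguments rest on the Plancherel identity $\|M_+^j\psi\|_{L^2(\cfan)}=\|\cA^j f\|_{L^2(\Hn)}$, which you justify carefully via the distributional intertwining relation~\eqref{emmepiu}. From there the paper obtains the upper estimate by the crude bound $\|\cA^j f\|_{L^2}\le R(f)^{2j}\|f\|_{L^2}$ and defers the lower estimate to the general Proposition~\ref{due} (proved for arbitrary radial tempered distributions and every $L^p$). You instead rewrite $\|\cA^j f\|_{L^2(\Hn)}^{1/j}$ exactly as $\|\phi\|_{L^{2j}(d\nu)}$ with $\phi=|\cdot|^2$ and $d\nu=|f|^2\,dm$, and appeal to the classical convergence $\|\phi\|_{L^p(d\nu)}\to\|\phi\|_{L^\infty(d\nu)}=R(f)^2$ on the finite measure space $(\Hn,\nu)$, obtaining both inequalities at once. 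Your shortcut bypasses Proposition~\ref{due} entirely, but it is specific to $p=2$, where Plancherel furnishes an exact identity rather than a one-sided bound; the paper's detour through Proposition~\ref{due} costs nothing here since that proposition is needed anyway for Theorem~\ref{unoequiv}. One minor wording slip: where you write that ``$\cA^j f$ is the spherical transform of an $L^2$ density'' you mean that its spherical transform is an $L^2$ density, whence Plancherel forces $\cA^j f\in L^2(\Hn)$; the argument itself is sound.
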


\begin{proof} By Proposition~\ref{due}
it is enough to check that 
$\limsup_{j\to\infty} \|M_+^j\psi\|_{L^{2}(\cfan)}^{1/ j}\leq R(f)^2$,
and this is easily established by using the Plancherel formula. Indeed,
when $R(f)$ is finite,
\begin{align*}
\|M_+^j\psi \,\|_{L^{2}(\cfan)}
&= \|\cA^j\,f\|_{L^{2}(\Hn)}
\leq   R(f)^{2j} \, \|f\|_{L^{2}(\Hn)}.
\qedhere
\end{align*}
\end{proof}

\begin{corollary}\label{L2}
Let $R\geq 0$. Then  
$\gel$
is a bijection from the space $L^2_{\text{\rm rad},R}(\Hn)$ 
of square integrable radial functions~$f$ such that $R(f)\leq R$
onto $\{\psi\in L^2(\cfan)
\,\,: 
\lim_{j\to\infty} 
\|\,\,M_+^j \psi\,\,\|_{L^2(\cfan)}^{1/j}
\leq R^2
\}$
.
\end{corollary}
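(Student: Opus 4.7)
The plan is to derive Corollary~\ref{L2} as a direct consequence of Theorem~\ref{teoL2}, combined with the iterated intertwining identity $\gel(\cA^j f)=M_+^j\gel f$ coming from \eqref{Mpm} and the Plancherel formula for $\gel$.

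For the forward inclusion, take $f\in L^2_{\text{rad},R}(\Hn)$. Plancherel gives $\psi:=\gel f\in L^2(\cfan)$. Since $\supp f\subseteq\{|x|\le R\}$ and $|\cA(z,t)|=|(z,t)|^2$, every iterate $\cA^k f$ remains in $L^1\cap L^2(\Hn)$, so \eqref{Mpm} can be applied repeatedly to give $\gel(\cA^j f)=M_+^j\psi$. Plancherel then yields
\[
\|M_+^j\psi\|_{L^2(\cfan)}=\|\cA^j f\|_{L^2(\Hn)}\le R^{2j}\|f\|_{L^2(\Hn)},
\]
so $\limsup_{j\to\infty}\|M_+^j\psi\|_{L^2(\cfan)}^{1/j}\le R^2$. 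By Theorem~\ref{teoL2} the actual limit exists and coincides with $R(f)^2\le R^2$, so $\psi$ lies in the target set.

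For the reverse inclusion, suppose $\psi\in L^2(\cfan)$ satisfies $\lim_{j\to\infty}\|M_+^j\psi\|_{L^2(\cfan)}^{1/j}\le R^2$. Finiteness of this limit forces $M_+^j\psi\in L^2(\cfan)$ for every $j\ge 0$, so Theorem~\ref{teoL2} applies: the function $f$ with $\gel f=\psi$ lies in $L^2(\Hn)$ and satisfies
\[
R(f)^2=\lim_{j\to\infty}\|M_+^j\psi\|_{L^2(\cfan)}^{1/j}\le R^2,
\]
hence $f\in L^2_{\text{rad},R}(\Hn)$. Combining this with the injectivity of $\gel$ (another immediate consequence of Plancherel) gives the stated bijection.

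I do not anticipate any substantial obstacle, as all the analytic content is already packaged in Theorem~\ref{teoL2} and in the identity~\eqref{Mpm}. The only small point worth checking is the legitimacy of iterating the intertwining $\gel(\cA f)=M_+\gel f$ on an $L^2$ function, but compact support of $f$ ensures that each $\cA^k f$ is in $L^1\cap L^2(\Hn)$, so \eqref{Mpm} may be applied at every step.
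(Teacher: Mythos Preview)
Your argument is correct and is exactly the approach intended by the paper, which states the corollary without separate proof as an immediate consequence of Theorem~\ref{teoL2} and Plancherel. The only point you pass over quickly is that membership of $\psi$ in the target set forces $M_+^j\psi\in L^2(\cfan)$ for \emph{all} $j$ (not just large $j$); this follows since $|\cA|^{k}\le 1+|\cA|^{k+1}$ pointwise, so $\cA^{k+1}f\in L^2$ and $f\in L^2$ imply $\cA^kf\in L^2$, and one descends by induction.
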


\subsection{Schwartz functions}
 
The purpose of this subsection is to prove the following characterization.

\begin{theorem}\label{sch}
Let $f$ be in $\schwkrad$. The following conditions are equivalent.
\begin{enumerate}

\item  $R(f)$ is finite;

\item  for every $h\geq 0$ and every $p$ in $[1,\infty]$,
 $\limsup_{j\to\infty} 
\|\,\xi^h\,M_+^j\gel f\,\,\|_{L^p(\cfan)}^{1/j}
$ is finite;

\item  there exists  $p$ in $[1,\infty]$ such that
 $\liminf_{j\to\infty} 
\|\,M_+^j\gel f\,\,\|_{L^p(\cfan)}^{1/j}
$ is finite.

\end{enumerate}
Moreover, if any of these conditions is satisfied, then for every $h\geq 0$ and every $p$ in $[1,\infty]$,
$$
\lim_{j\to\infty} 
\|\,(1+\xi)^h\,M_+^j\gel f\,\,\|_{L^p(\cfan)}^{1/j}
=R(f)^2.
$$
\end{theorem}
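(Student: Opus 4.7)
The plan is to deduce Theorem~\ref{sch} from the distributional Paley--Wiener statement Theorem~\ref{unoequiv}, exploiting two features of the Schwartz setting: multiplication by $\xi$ on the spectral side corresponds to applying the sublaplacian $L$ on the group side, and on $\cfan$ the variable $\xi$ is non-negative, so the weight $(1+\xi)^{-\beta}$ appearing in Theorem~\ref{unoequiv} becomes harmless when $\gel f$ is itself Schwartz.

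For the implication (1)~$\Rightarrow$~(2), I would combine \eqref{Mpm} with the intertwining relation $\gel(Lg)=\xi\,\gel g$ to obtain the key identity
\begin{equation*}
\xi^h M_+^j \gel f \;=\; \gel\bigl(L^h \cA^j f\bigr).
\end{equation*}
Since $|\Phi_{\xi,\la}|\le 1$ on $\cfan$, this yields the $L^\infty$ bound $\|\xi^h M_+^j\gel f\|_{L^\infty(\cfan)}\le \|L^h(\cA^j f)\|_{L^1(\Hn)}$. Expanding $L^h(\cA^j f)$ via the Leibniz rule produces a finite sum of terms $({\op D}^I\cA^j)({\op D}^J f)$ with $\deg I+\deg J=2h$; each differentiation of $\cA^j$ contributes at most a factor $j$, while $\cA^j$ itself is bounded by $R(f)^{2j}$ on the support of $f$. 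The outcome is a bound $C_h\, j^{2h}\,R(f)^{2j}$, whose $j$-th root tends to $R(f)^2$. To treat general $p\in[1,\infty]$, I would fix $N$ large enough so that $(1+\xi)^{-N}\in L^p(\cfan)$ (possible by \eqref{integrabilita}) and use H\"older against the previous $L^\infty$ estimate with $h$ replaced by $h+N$.

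The implication (2)~$\Rightarrow$~(3) is immediate on setting $h=0$. For (3)~$\Rightarrow$~(1), I would exploit that $\xi\ge 0$ on $\cfan$ to get $(1+\xi)^{-\beta}\le 1$ for any $\beta\ge 0$, so that
\[
\|(1+\xi)^{-\beta} M_+^j\gel f\|_{L^p(\cfan)}\;\le\; \|M_+^j\gel f\|_{L^p(\cfan)}.
\]
Since $\gel f$ is Schwartz on $\cfan$ by Theorem~\ref{nostro} and $M_+$ preserves smoothness, the distribution $M_+^j\gel f$ is the restriction to $\cfan$ of a smooth function on $\R^2$, so condition~(3) of Theorem~\ref{unoequiv} is satisfied for $\Lambda=f$, giving $R(f)<\infty$.

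For the final limit identity, the upper bound $\limsup\|(1+\xi)^h M_+^j\gel f\|_{L^p}^{1/j}\le R(f)^2$ follows from the same computation as in (1)~$\Rightarrow$~(2). The matching lower bound comes from the inequality $(1+\xi)^h\ge 1$ on $\cfan$, which gives $\|(1+\xi)^h M_+^j\gel f\|_{L^p}\ge \|M_+^j\gel f\|_{L^p}$, combined with Proposition~\ref{due} applied with $N=0$, yielding $\liminf\|M_+^j\gel f\|_{L^p}^{1/j}\ge R(f)^2$. I expect the main obstacle to be the bookkeeping in the Leibniz expansion of $L^h(\cA^j f)$: one must check that each differentiation of $\cA^j$ contributes only a polynomial factor in $j$ (not exponential), so that these factors evaporate after extracting the $j$-th root.
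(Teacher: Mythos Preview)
Your proposal is correct and follows essentially the same approach as the paper: the implication $(1)\Rightarrow(2)$ via the identity $\xi^h M_+^j\gel f=\gel(L^h\cA^j f)$, the Leibniz expansion yielding the $C_h\,j^{2h}R(f)^{2j}$ bound, and the passage to general $p$ via an $L^p$ weight are exactly the content of the paper's Proposition~\ref{uno}; your lower bound argument via $(1+\xi)^h\ge1$ and Proposition~\ref{due} matches the paper's corollary. The only cosmetic difference is that for $(3)\Rightarrow(1)$ you route through Theorem~\ref{unoequiv} using $(1+\xi)^{-\beta}\le1$, whereas the paper invokes Proposition~\ref{due} directly---but these are the same argument.
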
 

Note that the implication $(2)\Rightarrow (3)$ is trivial,
and that $(3)\Rightarrow (1)$ follows from Proposition~\ref{due}.
In the next proposition we prove the implication $(1)\Rightarrow (2)$.
\begin{proposition} 
\label{uno}
Suppose that $f$ is a radial Schwartz function on $\Hn$.
 Then
for every $h\geq 0$ and every $p$ in $[1,\infty]$
$$
\limsup_{j\to\infty} 
\|\,(1+\xi)^h\,M_+^j\gel f\,\,\|_{L^p(\cfan)}^{1/j}
\leq R(f)^2.
$$ 
\end{proposition}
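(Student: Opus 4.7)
The plan is to reduce to an $L^1$-estimate on $\Hn$ using the intertwining relations. Set $R := R(f)$; by definition $\supp f \subset \{x\in \Hn : |x|\leq R\}$, so $f\in \crad(\Hn)$. Iterating $\gel(Lg)=\xi\,\gel g$ together with $M_+ \gel g = \gel(\cA g)$ from \pref{emmepiu}, I obtain the key identity
$$
(1+\xi)^h\, M_+^j \gel f \;=\; \gel\!\bigl((I+L)^h\, \cA^j\, f\bigr).
$$

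For the endpoint $p=\infty$, since $|\Phi_{\xi,\la}|\leq 1$ on $\cfan$ one has $\|\gel g\|_{L^\infty(\cfan)}\leq \|g\|_{L^1(\Hn)}$. Expanding $(I+L)^h$ into left-invariant monomials $\op D^I$ of homogeneous degree at most $2h$ and applying the Leibniz rule produces a finite sum of terms $(\op D^{I_1}\cA^j)(\op D^{I_2} f)$ with $\deg I_1 + \deg I_2 \leq 2h$. Using $Z_j\cA = \bar z_j/2$, $\bar Z_j\cA = 0$, and $T\cA = i$, a short induction shows that $\op D^{I_1}\cA^j$ is a polynomial in $j$ of degree $\deg I_1$ times a function of homogeneous degree $2j - \deg I_1$; on $\supp f$ one therefore has $|\op D^{I_1}\cA^j(x)| \leq C_{I_1,R}\,j^{\deg I_1}\,R^{2j}$. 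Combining,
$$
\|(I+L)^h \cA^j f\|_{L^1(\Hn)} \;\leq\; C_{h,R}\,j^{2h}\,R^{2j}\sum_{\deg I_2\leq 2h} \|\op D^{I_2} f\|_{L^1(\Hn)},
$$
the latter norms being finite since $f$ is Schwartz. Taking $j$-th roots yields $\limsup_j\|(1+\xi)^h M_+^j \gel f\|_{L^\infty(\cfan)}^{1/j}\leq R^2$.

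For general $p\in[1,\infty]$ I choose $\alpha\geq h$ large enough that $(1+\xi)^{h-\alpha}\in L^p(\cfan)$; this is possible by integrating directly against the Plancherel measure, in the spirit of \pref{integrabilita} (concretely, $(\alpha-h)\,p > \dimH+2$ suffices). H\"older's inequality gives
$$
\|(1+\xi)^h M_+^j \gel f\|_{L^p(\cfan)} \;\leq\; \|(1+\xi)^{h-\alpha}\|_{L^p(\cfan)}\; \|(1+\xi)^\alpha M_+^j \gel f\|_{L^\infty(\cfan)},
$$
and the $L^\infty$ bound just established, with $\alpha$ in place of $h$, produces $C_{\alpha,R,f}\,j^{2\alpha}\,R^{2j}$ on the right. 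The $j$-th root tends to $R^2$, proving the claim.

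The argument is essentially mechanical once the intertwining identity is in place; the only mildly technical point is the polynomial-in-$j$ control of the derivatives of $\cA^j$, but this mirrors the computation appearing in the proof of Lemma~\ref{lemmafj}, with the difference that we are here differentiating a positive rather than a negative power of $\cA$.
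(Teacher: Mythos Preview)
Your proof is correct and follows essentially the same approach as the paper's: both use the intertwining identity to transfer the problem to an $L^1$-estimate of $L^h\cA^j f$ (you use $(I+L)^h$ directly, the paper uses $L^h$ and passes to $(1+\xi)^h$ afterwards), control the derivatives of $\cA^j$ by a polynomial in $j$ times $R^{2j}$ on $\supp f$, and then reach general $p$ via the integrability of $(1+\xi)^{-M}$ against the Plancherel measure. The only omission is that you tacitly assume $R(f)<\infty$; the paper disposes of the cases $R(f)=\infty$ and $R(f)=0$ in one line, and you should too.
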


\begin{proof}  
If $R(f)=\infty$ there is nothing to prove. If $R(f)=0$, then $f=0$
and the conclusion is again trivial.
We therefore suppose that $R(f)$ is positive. Note that
$$
\xi^h\,M_+^j\gel f(\la,\xi)
=
\gel\bigl({L^h\mathcal{A}^{j} f}\bigr) 
$$
and when $j\geq 2h$,  by the Leibniz rule
\begin{align*}
|L^h \mathcal{A}^{j}\, f|
&=\left| 
\sum_{{\deg I+\deg J=2h}}
c_{h,I,J}\,({\op D}^I \mathcal{A}^{j})\,({\op D}^{J} f)
\right|
\\ 
&\leq 
\sum_{{\deg I+\deg J=2h}}|c_{h,I,J}|
\, j^{|I|} \,|\mathcal{A}^{j-\deg I}|\,|{\op D}^{J} f|.
\end{align*}
 Therefore 
\begin{align*}
\|\, \xi^h\,M_+^j\gel f\,\,\|_{L^\infty(\cfan)}
&\leq \|L^h\mathcal{A}^{j}\, f\|_{L^1(\Hn)}
\\
&
\leq C_h\, j^{2h}\,\sum_{q\leq 2h}
\max_{\deg J=2h-q} \|\mathcal{A}^{j-q}\,{\op D}^{J} f\|_{L^1(\Hn)}
\\
&
\leq C_h\, j^{2h}\,\sum_{q\leq 2h}\, R(f)^{2j-2q}\, 
\max_{\deg J=2h-q}\|{\op D}^{J} f\|_{L^1(\Hn)}
\\
&
= C_{f,h}\,j^{2h}\, R(f)^{2j}.
\end{align*}
 
We note that
for a sufficiently big integer $M$ the function $(\la,\xi)\mapsto
(1+\xi)^{-M}$ is in $L^{p}(\cfan)$, so that
\begin{align*}
\|\,(1+\xi)^h\,M_+^j\gel f\,\|_{L^{p}(\cfan)}
&\leq C\,\,
\|(1+\xi)^{M+h}\,M_+^j\gel f\,\|_{L^{\infty}(\cfan)}
\\
&\leq C_{f,M,h}\, \left( 1 +j^{2M+2h}\right)\,R(f)^{2j},
\end{align*}
and  
taking the $j$-th root, the desired inequality follows.
\end{proof}

\begin{corollary}
Suppose that $f$ is a radial Schwartz function on $\Hn$ and let $1\leq p\leq \infty$.  Then
for every $h$ in $\N$
$$
\lim_{j\to\infty} \|\,(1+\xi)^h\,M_+^j\gel f\,\,\|_{L^{p}(\cfan)}^{1/j}= R(f)^2.
$$ 
\end{corollary}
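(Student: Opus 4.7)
The plan is to derive the limit by a squeeze, combining the upper bound already in Proposition~\ref{uno} with a matching lower bound extracted from Proposition~\ref{due}. First I would invoke Proposition~\ref{uno} directly: for every $h\ge 0$ and $p\in[1,\infty]$ it gives
$$
\limsup_{j\to\infty}\|(1+\xi)^h\,M_+^j\gel f\|_{L^p(\cfan)}^{1/j}\le R(f)^2,
$$
with the inequality being vacuous if $R(f)=\infty$.

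For the matching lower bound I would apply Proposition~\ref{due} to the radial tempered distribution $\Lambda=f\,\haar\in\schwkradp$, which satisfies $R(\Lambda)=R(f)$. Since $f\in\schwkrad$, Theorem~\ref{nostro} yields $\gel f\in\cS(\cfan)$; reading the definition of $M_+$ in Section~6 as a first order differential operator plus a smooth weighted Taylor-remainder integral in the $\xi$-variable, iterating $j$ times produces a smooth representative $G_j=M_+^j\gel f$ on $\R^2$, hence a locally $\mu$-integrable one. The distributional identity~\eqref{emmepiuF}, applied inductively, gives $M_+^j\gel\Lambda=G_j\,\mu$, so the hypothesis of Proposition~\ref{due} is met and it delivers
$$
\liminf_{j\to\infty}\|(1+\xi)^{-N}\,M_+^j\gel f\|_{L^p(\cfan)}^{1/j}\ge R(f)^2
$$
for every $N\in\N$. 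Since $\xi\ge 0$ on $\cfan$, the pointwise bound $(1+\xi)^{-N}\le (1+\xi)^h$ immediately gives
$$
\|(1+\xi)^{-N}\,M_+^j\gel f\|_{L^p(\cfan)}\le\|(1+\xi)^h\,M_+^j\gel f\|_{L^p(\cfan)},
$$
transferring the same lower bound to the weight $(1+\xi)^h$. Combining the two bounds yields the stated equality. The case $R(f)=\infty$ is handled by the last paragraph of the proof of Proposition~\ref{due}, which gives $\liminf\ge R$ for every $R>0$, so both sides are $+\infty$.

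The only potential sticking point, and hence the main obstacle to keep an eye on, is verifying the local $\mu$-integrability required by Proposition~\ref{due}. I would dispel it by observing that, because each ingredient in the defining formula for $M_+$ (differentiation, multiplication by the affine polynomial $\dimH\la+\xi$, and the Taylor-remainder integral) preserves smoothness, $M_+^j\gel f$ is smooth on $\R^2$ and hence locally integrable against $\mu$; this also legitimizes the identity $M_+^j\gel\Lambda=(M_+^j\gel f)\,\mu$ needed to feed Proposition~\ref{due}.
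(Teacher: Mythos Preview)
Your proposal is correct and follows essentially the same approach as the paper: the upper bound comes from Proposition~\ref{uno}, the lower bound from Proposition~\ref{due} applied to $\Lambda=f\,\haar$, and the pointwise inequality $(1+\xi)^{-N}\le(1+\xi)^h$ on $\cfan$ transfers the lower bound to the weighted norm. The paper's proof is terser---it simply takes $N=0$ and writes $\|(1+\xi)^h\,M_+^j\gel f\|_{L^p(\cfan)}\ge\|M_+^j\gel f\|_{L^p(\cfan)}$---but your extra care in verifying that $M_+^j\gel f$ is smooth (hence locally $\mu$-integrable) and that $M_+^j\gel\Lambda=(M_+^j\gel f)\,\mu$ via~\eqref{emmepiuF} is entirely appropriate and does not change the substance of the argument.
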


\begin{proof} Since $\|(1+\xi)^h\,M_+^j\gel f\|_{L^{p}(\cfan)}
\geq \|\,M_+^j\gel f\,\|_{L^{p}(\cfan)}$, by Proposition~\ref{due} we obtain
$$
\liminf_{j\to\infty} \|\,(1+\xi)^h\,M_+^j\gel f\,\,\|_{L^{p}(\cfan)}^{1/j}
\geq \liminf_{j\to\infty}\|\,M_+^j\gel f\,\,\|_{L^{p}(\cfan)}^{1/j}\geq R(f)^2.
$$
The thesis follows from Proposition~\ref{uno}.
\end{proof}

\section{Paley--Wiener theorems for the inverse spherical transform }

In this section we describe the inverse spherical transform
of compactly supported distributions in $\cS_0'(\cfan)$.

Given a compactly supported distribution in $\cS'(\R^2)$,
we define the function $f_U$ on the Heisenberg group by
$$
f_U(z,t)=\dualR{U}{\Phi_{(\cdot)}(z,t)}\qquad \forall  (z,t)\in\Hn.
$$ 
An easy consequence of 
 Lemma~\ref{olosferiche} is the following.

\begin{lemma}\label{oloinv}
Let $U$ be a compactly supported distribution in $\cS'(\R^2)$.
 Then the function
$$
(x,y,t)\mapsto f_U(x+iy,t)=\dualR{U}{\Phi_{(\cdot)}(x+iy,t)}
$$ 
extends to a holomorphic function   on $\C^{2\dimH+1}$.
\end{lemma}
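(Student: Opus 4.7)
The plan is to define the candidate extension by literally inserting complex arguments into the holomorphic-in-space extension of $\Phi_{\xi,\la}$ supplied by Lemma~\ref{olosferiche}. For $(x,y,t)\in\C^{2\dimH+1}$, set
$$
\widetilde f_U(x,y,t) = \dualR{U}{\Phi_{(\cdot,\cdot)}(x+iy,t)}.
$$
This makes sense because, by Lemma~\ref{olosferiche}, for each fixed $(x,y,t)\in\C^{2\dimH+1}$ the map $(\xi,\la)\mapsto \Phi_{\xi,\la}(x+iy,t)$ is entire on $\C^2$, hence in particular smooth on $\R^2$, and $U$ has compact support. Evidently $\widetilde f_U$ agrees with $f_U$ on the real subspace $\R^{2\dimH+1}$.

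The first step is to show that $\widetilde f_U$ is continuous on $\C^{2\dimH+1}$. Fix a compact neighborhood $K'$ of $\supp U$ in $\R^2$. Lemma~\ref{olosferiche} gives joint holomorphy in all five groups of variables, so for every $m\in\N$ the map $(x,y,t)\mapsto \Phi_{(\cdot,\cdot)}(x+iy,t)\big|_{K'}$ is continuous (in fact holomorphic) from $\C^{2\dimH+1}$ into the Banach space $C^m(K')$. Since $U$ has finite order on $K'$, continuity of the pairing with $U$ gives continuity of $\widetilde f_U$.

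Next I would prove separate holomorphy in each complex coordinate and invoke Hartogs' theorem. Fix all variables except, say, $x_1\in\C$, and let $\gamma$ be a closed contour in the $x_1$-plane. Cauchy's theorem applied to the entire function of Lemma~\ref{olosferiche} yields $\oint_\gamma \Phi_{\xi,\la}(x+iy,t)\,dx_1=0$ pointwise in $(\xi,\la)$; the same identity holds in $C^m(K')$ by the Banach-valued holomorphy established above. Interchanging the contour integral with the action of $U$ then gives
$$
\oint_\gamma \widetilde f_U\,dx_1 = \biggl\langle U,\, \oint_\gamma \Phi_{(\cdot,\cdot)}(x+iy,t)\,dx_1\biggr\rangle_{\R^2} = 0,
$$
and Morera's theorem delivers holomorphy in $x_1$. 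The remaining $2\dimH$ variables are handled by the same argument.

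The only real (but mild) obstacle is justifying the commutation of the $U$-pairing with the contour integral; this is immediate from the $C^m(K')$-valued holomorphy of $(x,y,t)\mapsto \Phi_{(\cdot,\cdot)}(x+iy,t)$, which itself is inherited from the uniform-on-compacta convergence of the explicit power series \eqref{Phi} for $\Phi_{\xi,\la}$ and its $(\xi,\la)$-derivatives. No further analytic input is needed, so the lemma is indeed an essentially formal consequence of Lemma~\ref{olosferiche}.
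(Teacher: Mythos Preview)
Your proof is correct and matches the paper's approach: the paper simply records the lemma as ``an easy consequence of Lemma~\ref{olosferiche}'' without giving any further details, and your argument via $C^m(K')$-valued holomorphy, Morera, and Hartogs is exactly the standard way to unpack that remark.
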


If $U$ is in $\cS'(\R^2)$, define
 $$
\suppfan (U)=
{\rm max}\set{|\xi|\, :\, (\xi,\la)\in {\rm supp}\, U},
$$
 so that
  a distribution $U$ in $\cS_0'(\cfan)$  is compactly supported 
 if and only if $\rho(U)$ is finite.

In the next proposition we prove that if $U$ is 
a compactly supported distribution in $\cS_0'(\cfan)$ then the function $f_U$
is a slowly growing function on $\Hn$ and it coincides with 
the inverse spherical transform of $U$.

\begin{proposition}\label{invtemp}
Let $U$ be a compactly supported distribution in $\cS_0'(\cfan)$ and let, as before,
$$
f_U(z,t)=\dualR{U}{\Phi_{(\cdot)}(z,t)}\qquad \forall  (z,t)\in\Hn.
$$ 
Then $  U=\gel(f_U\,m)$ and $f_U$ is a  slowly growing function on $\Hn$ together  with all its derivatives. Moreover, for every $\rho>\suppfan(U)$
there exist $C=C_\rho$ and $M$ such that for all $j\geq 0$ 
\begin{equation}
\label{e:Lj}
 \left|
     L^j f_U(z,t)
     \right|
     \leq C \,(1+j)^k\, \rho^j\, \big(1+|(z,t)|\big)^M
     \qquad \forall (z,t)\in \Hn,
\end{equation}
where $k$ is the order of $U$.
\end{proposition}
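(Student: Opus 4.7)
The proof breaks into three stages: first the pointwise estimate \eqref{e:Lj}, from which the slow growth of $f_U$ together with its $(z,t)$-derivatives follows by the same mechanism; next, the consequent interpretation of $f_U\cdot m$ as a tempered distribution; and finally the identity $U = \gel(f_U\cdot m)$.

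The heart of the estimate is the pointwise identity
\[
M_+ \Phi_{(\cdot)}(z,t)(\xi,\la) = \bar\cA(z,t)\,\Phi_{\xi,\la}(z,t),\qquad M_- \Phi_{(\cdot)}(z,t)(\xi,\la) = -\cA(z,t)\,\Phi_{\xi,\la}(z,t),
\]
obtained by applying \eqref{emmepiu} to a radial Schwartz function $f$, writing $\gel f(\xi,\la) = \int f(x)\Phi_{\xi,\la}(x\inv)\,dx$, commuting $M_+$ with the $x$-integral, and using $\cA(x) = \bar\cA(x\inv)$. Iterating yields $M_+^a M_-^b \Phi_{(\cdot)}(z,t) = (-1)^b \bar\cA(z,t)^a\cA(z,t)^b \Phi_{(\cdot)}(z,t)$, and since $|\Phi_{\xi,\la}|\le 1$ on $\cfan$,
\[
\sup_{(\xi,\la)\in\cfan}|M_+^a M_-^b \Phi_{(\cdot)}(z,t)(\xi,\la)| \le (1+|(z,t)|)^{2(a+b)}.
\]

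Writing $L^j f_U(z,t) = \langle U,\xi^j\Phi_{(\cdot)}(z,t)\rangle$ (differentiating under the distributional pairing is valid since $U$ has compact support and $\Phi$ is jointly smooth by Lemma \ref{olosferiche}), one reduces \eqref{e:Lj} to estimating $|\langle U,\xi^j\Phi_{(\cdot)}(z,t)\rangle|$. Since $U\in\cS_0'(\cfan)\cong\cS(\cfan)^*$ by Corollary \ref{nostrodist}, has order $k$, and is compactly supported, the continuity of $U$ on $\cS(\cfan)$—whose Fr\'echet topology, via Theorem \ref{nostro} and the $M_\pm$-calculus of the preceding section, is equivalent to one controlled by $M_\pm$-seminorms—yields a bound
\[
|\langle U, g\rangle|\le C\sum_{a+b\le k'}\sup_{K}|M_+^a M_-^b g|,
\]
for a compact set $K\subset\cfan$ containing $\supp U$ and an integer $k'$ depending on $k$. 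Applied to $g = \xi^j\Phi_{(\cdot)}(z,t)$, a Leibniz-type expansion of $M_\pm^\alpha(\xi^j\Phi)$ produces factors $M_\pm^\beta\xi^j$ (bounded on $K\cap\{|\xi|\le\suppfan(U)\}$ by $C(1+j)^{|\beta|}\rho^{j-|\beta|}$) and factors $M_\pm^\gamma\Phi_{(\cdot)}(z,t)$ (bounded on $\cfan$ by $(1+|(z,t)|)^{2|\gamma|}$). Summing yields \eqref{e:Lj} with $M=2k'$. The same reasoning applied to $D^I f_U(z,t)=\langle U,D^I\Phi_{(\cdot)}(z,t)\rangle$, combined with Lemma \ref{stimasferiche} for the $(z,t)$-derivatives of $\Phi$ on $\cfan$ and the commutation of $D^I$ with $M_\pm$, gives polynomial growth of every derivative of $f_U$; in particular $f_U\cdot m\in\cS'(\Hn)$.

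For the identity $U=\gel(f_U\cdot m)$, take $\varphi\in\cS(\R^2)$ and let $\psi=(\gel\inv(\varphi|_\cfan))^\vee\in\schwkrad$ via Theorem \ref{nostro}. Then by definition \eqref{tildeT},
\[
\langle\gel(f_U\cdot m),\varphi\rangle = \int_{\Hn} f_U(y)\psi(y)\,dy = \left\langle U,\int_{\Hn}\Phi_{(\cdot)}(y)\psi(y)\,dy\right\rangle = \langle U,\gel\check\psi\rangle = \langle U,\varphi\rangle,
\]
where the interchange is justified by truncating $\psi$ with compactly supported cutoffs $\psi_R$ and showing that $\int\Phi_{(\cdot)}(y)\psi_R(y)\,dy\to\gel\check\psi$ in the $M_\pm$-seminorms on $\cS(\cfan)$ (using rapid decay of $\psi$ and the polynomial bound for $M_\pm^\alpha\Phi_{(\cdot)}(y)$ on $\cfan$), and the last equality uses $\gel\check\psi|_\cfan = \varphi|_\cfan$ together with $U\in\cS_0'(\cfan)$. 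The chief technical obstacle is the $M_\pm$-seminorm continuity of $U$: extracting, from order-$k$ continuity on $\cS(\cfan)$, a concrete bound in terms of suprema of $|M_+^a M_-^b g|$ along the curve $\cfan$ (rather than $C^k$-norms on a thicker neighborhood in $\R^2$) is what converts the a priori exponential growth of $\Phi_{\xi,\la}(z,t)$ off $\cfan$ into the polynomial dependence required by \eqref{e:Lj}.
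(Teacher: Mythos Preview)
Your approach diverges from the paper's, and the point you yourself flag as ``the chief technical obstacle'' is indeed a genuine gap rather than a routine detail.

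The paper does \emph{not} attempt to bound $|\langle U,g\rangle|$ by $M_\pm$-seminorms of $g$ on $\cfan$. Instead it exploits directly the fact that $U\in\cS_0'(\cfan)$ kills any function vanishing on $\cfan$: since $U$ has order $k$ and support in $B_\rho$, one has
\[
|\langle U,\xi^j\Phi_{(\cdot)}(z,t)\rangle|\le C\inf\big\{\|\xi^j\varphi^{z,t}\|_{C^k(B_\rho)}:\varphi^{z,t}\in C^k(\R^2),\ \varphi^{z,t}|_{\cfan\cap B_\rho}=\Phi_{(\cdot)}(z,t)\big\},
\]
and then \emph{constructs} a favourable extension. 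Take $\psi\in\cD(\R^2)$ with $\psi=1$ on $B_\rho$, let $u=\gel^{-1}(\psi|_\cfan)\in\schwkrad$ via Theorem~\ref{nostro}, and set $\varphi^{z,t}$ to be any Schwartz extension of $\gel(\check\nu_{(z,t)}*u)$, where $\nu_{(z,t)}$ is the orbital average measure. The continuity in Theorem~\ref{nostro} gives $\|\varphi^{z,t}\|_{C^k(B_\rho)}\le C\|\check\nu_{(z,t)}*u\|_{(M)}$, and the latter Schwartz seminorm is bounded by $C(1+|(z,t)|)^M$ because translation by a point of norm $R$ costs at most a factor $(1+R)^M$ in the seminorm $\|\cdot\|_{(M)}$.

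Your route would need the assertion that the quotient topology on $\cS(\cfan)$ is generated by seminorms of the form $\sup_K|M_+^aM_-^bg|$. Nothing in Section~6 or in Theorem~\ref{nostro} gives this: Theorem~\ref{nostro} identifies $\cS(\cfan)$ with $\schwkrad$, but the standard Schwartz seminorms on $\schwkrad$ involve $(z,t)$-derivatives $\op D^I$, which on the spectral side do \emph{not} correspond to iterated $M_\pm$ alone. Moreover, even granting such seminorms, your ``Leibniz-type expansion of $M_\pm^\alpha(\xi^j\Phi)$'' is problematic: $M_\pm$ contains a shift term $\psi(\xi\pm2\la,\la)$, so $M_+(\xi^jg)$ is not $\xi^jM_+g$ plus terms involving $M_+^\beta(\xi^j)\cdot M_+^\gamma g$; one picks up factors $(\xi+2\la)^j$ which do not separate as you describe. (Equivalently, on the group side $[\cA,L]\ne0$, so $M_+$ and multiplication by $\xi$ do not commute in a way that yields a clean product rule.)

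In short, the eigenfunction identity $M_+\Phi_{(\cdot)}(z,t)=\bar\cA(z,t)\Phi_{(\cdot)}(z,t)$ is correct and appealing, but it does not by itself convert the order-$k$ continuity of $U$ on $\cS(\cfan)$ into the polynomial bound~\eqref{e:Lj}; the paper's explicit-extension argument is what actually closes that gap.
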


\begin{remark}
Observe that if $U$ is 
a  distribution in $\cS'(\R^2)$ with compact support in $\cfan$,
then the function $f_U$
may not be slowly growing.
Indeed let $U=\partial_\xi\delta_{(\dimH,1)}$ where $\delta_{(\dimH,1)}$ is the Dirac measure
at the point $(\dimH,1)$ in $\cfan$. Then, reasoning as in Lemma~\ref{olosferiche},
when $(z,t)$ is in $\Hn$
$$
f_U(z,t)=-\partial_\xi\Phi_{\xi,\la}{\,}_{|_{(\dimH,1)}}(z,t)
=\frac{e^{it}\, e^{-|z|^2/4}}{2}\sum_{k=1}^\infty \frac{(|z|^2/2)^k}{k\, (\dimH)_k}.
$$
Since $k<k+ \dimH$ and $(\dimH)_k\leq (\dimH+k-1)!$ we obtain
when $|z|$ is large
$$
|f_U(z,t)|>\frac{e^{-|z|^2/4}}{2}\sum_{k=1}^\infty \frac{(|z|^2/2)^k}{(\dimH+k)!}
\sim \frac{e^{|z|^2/4}}{2(|z|^2/2)^\dimH}.
$$
This is due, much as in Subsection~\ref{sec:olo}, to the fact that
the holomorphic extension of spherical functions does not satisfy good estimates
away from the Heisenberg fan.
The main point in the proof of Proposition~\ref{invtemp} is that,
according to formula~\pref{numero},
if $U$ is in  $\cS_0'(\cfan)$ one is allowed to choose a different extension.
 \end{remark}

\begin{proof}  By Theorem~\ref{nostrodist} there exists $\Lambda$ in $\schwkradp$ such that $\gel  \Lambda = U $. 
Let $g$ be in $\cD(\Hn)$,  then 
$$
\begin{aligned}
\langle f_U,g\rangle_{H_n}&=
\int_{H_n}f_U(z,t)\,g(z,t)\,dz\,dt 
\\
&=\int_{H_n}\dualR{U}{\Phi_{(\cdot)}(z,t)}\,\,g(z,t)\,dz\,dt\\
&=\Big\langle U,\int_{H_n}\Phi_{(\cdot)}(z,t)\,g(z,t)\,dz\,dt\Big\rangle_{\R^2}\\
&=\dualR{U}{\cG \check g}\\
&=\langle \Lambda,g\rangle_{H_n}\ .
\end{aligned}
$$
Hence the distribution  $\Lambda$ coincides with the function $f_U$, which is smooth.

We first prove the estimate~\pref{e:Lj}.
Fix $(z,t)$ in $\Hn$. 
Let $k$ be the order of $U$, let  $\rho>\rho(U)$ and   
denote by  $B_\rho$ the ball of radius $\rho$ in $\R^2$. Then for every $j\geq 0$
 \begin{align}
   \nonumber  \left|
     L^j f_U(z,t)
     \right|&=
     \left|\dualR{ 
    U }{ {L}^j \Phi_{(\cdot)}(z,t)}
     \right|
     =
      \left|\dualR{\,U}{ \xi^j \Phi_{(\cdot)}(z,t)}
     \right|
     \\ \label{numero} &
           \leq
      C \inf\set{ 
     \|
     \xi^j \varphi^{z,t}
      \|_{C^k(B_\rho)}\,:\,\varphi^{z,t}\in C^k(\R^2),\quad
 {\varphi^{z,t}}_{|_{\cfan\cap B_\rho}}=\Phi_{(\cdot)}(z,t)
}
     \\ &
     \nonumber\leq
     C_{\rho}\, (1+j)^k\, \rho^j\inf\set{ 
      \|\varphi^{z,t}
      \|_{C^k(B_\rho)}\,:\,\varphi^{z,t}\in C^k(\R^2),\quad
 {\varphi^{z,t}}_{|_{\cfan\cap B_\rho}}=\Phi_{(\cdot)}(z,t)
}
\end{align}

In order to obtain the desired estimate we shall choose a suitable extension  $\varphi^{z,t}$ of $\Phi_{(\cdot)}(z,t)$.

Let $\psi$ be a smooth function on $\R^2$ with compact support such that $\psi_{|{B_\rho} }=1$. By Theorem~\ref{nostro} there exists $u$ in $\schwkrad$ such that $\cG u=\psi_{|\cfan}$. If  
$\nu_{(z,t)}$ denotes the measure defined by 
$$
\int_{\Hn} g(w,s)\, d\nu_{(z,t)}(w,s)=
\int_{U(n)} g(kz,t)\, dk \qquad\forall g\in C_c(\Hn),
$$
then for every $(\xi,\la)$ in $B_\rho\cap \cfan$
$$
\Phi_{\xi,\la}(z,t)
=\gel{\check\nu_{(z,t)}}(\xi,\la)=
\gel{\check\nu_{(z,t)}}(\xi,\la)\, \psi(\xi,\la)=
\gel\left(\check\nu_{(z,t)}\ast u
\right)(\xi,\la)
.
$$
Since $\check\nu_{(z,t)}\ast u$ belongs to $ \schwkrad$, then by Theorem~\ref{nostro} there exist
$\varphi^{z,t}$ in $ \cS(\R^2)$ and  $M\geq 0$ such that 
$$
\varphi^{z,t}(\xi,\la)=\gel\left(\check\nu_{(z,t)}\ast u
\right)(\xi,\la)
\qquad \forall(\xi,\la)\in   \cfan
$$
and 
$$
 \|\varphi^{z,t}
      \|_{C^k(B_\rho)}\leq C \, \|\check\nu_{(z,t)}\ast u\|_{(M)}.
$$
Moreover
$$
\varphi^{z,t}(\xi,\la)= \Phi_{(\xi,\la)}(z,t)
\qquad \forall(\xi,\la)\in  B_\rho\cap\cfan.
$$
If 
$ \tau_{(w,s)}u(w',s')=u\big((w,s)^{-1}(w',s')\big)$ denotes the left translation, then
 \begin{align*}
\|\varphi^{z,t}
      \|_{C^k(B_\rho)}
      &\leq {  C \, \|\nu_{(z,t)}\ast u\|_{(M)}
}
\\
&=C \, 
{ \left\|\int_{\Hn} \tau_{(w,s)}u\, d\nu_{(z,t)}(w,s)\right\|_{(M)}}
 \\
&\leq C \, 
  \int_{\Hn} \|\tau_{(w,s)}u\|_{(M)}\, d\nu_{(z,t)}(w,s)
   \\
&\leq C \, 
{   \int_{\Hn} (1+|w|^4+s^2)^{M/4}\, d\nu_{(z,t)}(w,s)
}
  \\
  &=    C \, \big(1+|(z,t)|\big)^{M}.
\end{align*}
Therefore there exists $M$ such that for all $j\geq 0$ 
$$
 \left|
     L^j f_U(z,t)
     \right|
     \leq C \,(1+j)^k\, \rho^j\, \big(1+|(z,t)|\big)^{M}
\qquad\forall (z,t)\in \Hn.
$$

The proof above can be adapted to prove that for every differential operator ${\op D}^I  $ 
of the form~\pref{monomi}
there exists $M>0$ such that   
$$
|{\op D}^I f_U(z,t)|\leq C\, (1+|(z,t)|)^M
\qquad \forall (z,t)\in \Hn.
$$
Indeed, note that
$$
 {\op D}^I f_U(z,t)=\dualR{U}{{\op D}^I\Phi_{(\cdot)}(z,t)}\ ,
$$
therefore
\begin{align*}
| {\op D}^I f_U(z,t)|
&\leq C \inf\set{\|\varphi^{z,t,I}\|_{C^k(B_\rho)}\,:\,\varphi^{z,t,I}\in C^k(\R^2)\quad
{\varphi^{z,t,I}}_{|_{\cfan\cap B_\rho}}= {\op D}^I \Phi_{(\cdot)}(z,t)}.
\end{align*} 
Fix $(z,t)$ in $\Hn$ and consider the distribution
 ${\op D}^I_{(z,t)}\nu_{(z,t)}$ defined by the rule
$$
\dualH{{\op D}^I_{(z,t)}\nu_{(z,t)}}{\varphi}
={\op D}^I\left(\int_K \varphi(kz,t)\, dk \right)
$$
Then ${\op D}^I_{(z,t)}\nu_{(z,t)}$ is a radial distribution
 supported in the orbit of $(z,t)$,
hence it has compact support. So, for $\psi$ and $u$ as above,
${\op D}^I_{(z,t)}\check\nu_{(z,t)}*u$ is in $\schwkrad$ and
by~\cite[Proposition~3.2]{ADR} there exists
$\varphi^{z,t,I}$ in $C^k(\R^2)$ and $M$ such that
$$
\varphi^{z,t,I}_{|_\cfan}=\cG({\op D}^I_{(z,t)}\check\nu_{(z,t)}*u)
\qquad\qquad
\|\varphi^{z,t,I}\|_{C^k(B_\rho)}\leq C\,\|{\op D}^I_{(z,t)}\check\nu_{(z,t)}*u\|_{(M)}
$$
Since $\gel u_{|_{\cfan\cap B_\rho}}=\psi_{|_{\cfan\cap B_\rho}}=1$,
$$
\varphi^{z,t,I}(\xi,\la)=\gel({\op D}^I_{(z,t)}\check\nu_{(z,t)}*u)(\xi,\la)
=\gel({\op D}^I_{(z,t)}\check\nu_{(z,t)})(\xi,\la)
\qquad \forall (\xi,\la)\in \cfan\cap B_\rho ,
$$
and by Proposition~\ref{distolo}
\begin{align*}
\gel({\op D}^I_{(z,t)}\check\nu_{(z,t)})(\xi,\la)
&=
\dualR{{\op D}^I_{(z,t)}\check\nu_{(z,t)}}{\check\Phi_{\xi,\la}}
={\op D}^I\left((w,s)\mapsto\int_K \Phi_{\xi,\la}(kw,s)\, dk \right)(z,t)
\\
&={\op D}^I\Phi_{\xi,\la}(z,t).
\end{align*} 
Finally, reasoning as before,
 \begin{align*}
\| \varphi^{z,t,I}\|_{C^k( B_\rho)}
&\leq 
C\,\|{\op D}^I_{(z,t)}\check\nu_{(z,t)}*u\|_{(M)}
\leq    C \, \big(1+|(z,t)|\big)^{M}
\qquad
\forall (z,t)\in \Hn.
\qedhere
\end{align*}
\end{proof}

Our characterization of the inverse spherical transform of compactly 
supported distributions is the following.

 \begin{theorem}\label{maininv}
Let $U$ be in $\cS'_0(\Sigma)$. The following conditions are equivalent.
\begin{enumerate}
\item
$\suppfan(U)$ is finite;

\item  
$\gel^{-1}U$ coincides with a smooth slowly growing function function
on $\Hn$ and for every $p$ in $[1,\infty]$ there exists $\beta>0$ such that
$$
\limsup_{j\to\infty}
\|\,(1+\cA)^{-\beta}\,L^j\gel^{-1}U\,\,\|_{L^p(\Hn)}^{1/j}
 <\infty;
$$
 
\item 
 for every large $j$  the distribution
$ L^j \gel^{-1}U$ coincides with a measurable function
on $\Hn$   and there exist $\beta>0$ and $p$ in $[1,\infty]$
such that
$$
\liminf_{j\to\infty} 
\|\,(1+\cA)^{-\beta}\,L^j\gel^{-1}U\,\,\|_{L^p(\Hn)}^{1/j}
<\infty .
$$

\end{enumerate}
Moreover, if any of these conditions is satisfied, then $\gel^{-1}U$
 is a smooth slowly growing function on $\Hn$
and for every $p$ in $[1,\infty]$ there exists $\beta>0$ such that
\begin{equation}\label{rhospettr}
\lim_{j\to\infty} \|\,(1+\cA)^{-\beta}\,L^j\gel^{-1}U\,\,\|_{L^p(\Hn)}^{1/j}= \suppfan(U).
\end{equation}
\end{theorem}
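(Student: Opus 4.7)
The plan is to mirror the proof of Theorem~\ref{unoequiv}, swapping the roles of $M_\pm$ and $L$, and of the weights $\xi$ and $\cA$. The implication $(2)\Rightarrow(3)$ is immediate, so only $(1)\Rightarrow(2)$, $(3)\Rightarrow(1)$, and the limit formula~\eqref{rhospettr} require attention. For $(1)\Rightarrow(2)$, assume $\suppfan(U)<\infty$. Proposition~\ref{invtemp} produces a smooth slowly growing representative $f_U=\gel\inv U$ with the pointwise bound $|L^jf_U(z,t)|\leq C(1+j)^k\rho^j(1+|(z,t)|)^M$ for every $\rho>\suppfan(U)$, with $M$ independent of $j$. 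Since $|\cA(z,t)|=|(z,t)|^2$, choosing $\beta$ large enough (depending on $p$ and $M$) so that $(1+|\cA|)^{-\beta}(1+|(z,t)|)^M\in L^p(\Hn)$ yields $\|(1+\cA)^{-\beta}L^jf_U\|_{L^p(\Hn)}\leq C_\beta(1+j)^k\rho^j$; taking the $j$-th root and letting $\rho\searrow\suppfan(U)$ gives the limsup bound, which also supplies the $\leq$ half of~\eqref{rhospettr}.

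For $(3)\Rightarrow(1)$ and the matching lower bound, I would argue as in Proposition~\ref{due}, on the spectral side. Supposing $\suppfan(U)>0$, for $0<\eps<\suppfan(U)/2$ I pick $\psi\in\cS(\R^2)$ with compact support in the annular region $\{\,\suppfan(U)-\eps<|\xi|<\suppfan(U)+\eps\,\}$ such that $\dualR{U}{\psi}\neq 0$; this is possible by the very definition of $\suppfan(U)$. Since $\psi$ is supported away from $\xi=0$, the function $\psi_j=\xi^{-j}\psi$ is smooth and compactly supported for every $j$. Using the intertwining $\xi^jU=\gel(L^jf_U\,m)$ together with the definition of $\gel$ on distributions,
\begin{equation*}
|\dualR{U}{\psi}|=|\dualR{\xi^jU}{\psi_j}|=|\dualH{L^jf_U}{(\gel\inv\psi_j)^{\check{\phantom a}}}|\leq \|(1+\cA)^{-\beta}L^jf_U\|_{L^p(\Hn)}\,\|(1+\cA)^{\beta}\gel\inv\psi_j\|_{L^{p'}(\Hn)}.
\end{equation*}
To control the second factor I would estimate $\|M_\pm^a\psi_j\|_{L^1(\cfan)}$ via Lemma~\ref{potenzeM+} combined with Leibniz applied to $\partial_\la^s\partial_\xi^r(\xi^{-j}\psi)$, obtaining $\|M_\pm^a\psi_j\|_{L^1(\cfan)}\leq C_a\,j^{2a}(\suppfan(U)-\eps)^{-j}$. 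By~\eqref{Mpm} this controls $\|\cA^a\gel\inv\psi_j\|_{L^\infty(\Hn)}$ and the $\bar{\cA}^a$ analogue, hence $\|(1+|\cA|)^a\gel\inv\psi_j\|_{L^\infty(\Hn)}$; choosing $a$ so that $(1+|\cA|)^{\beta-a}\in L^{p'}(\Hn)$ bounds the second factor by $C_{a,p'}\,j^{2a}(\suppfan(U)-\eps)^{-j}$. Rearranging and taking the $j$-th root yields $\liminf_j\|(1+\cA)^{-\beta}L^jf_U\|_{L^p(\Hn)}^{1/j}\geq\suppfan(U)-\eps$, and letting $\eps\to 0$ both completes $(3)\Rightarrow(1)$ (the case $\suppfan(U)=\infty$ would contradict the assumed finite liminf) and, combined with the upper bound, establishes~\eqref{rhospettr}.

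The main obstacle is the careful bookkeeping of the $j$-dependence in the auxiliary estimate on $\gel\inv\psi_j$: each $\partial_\xi$ applied to $\xi^{-j}$ contributes a factor of order $j\cdot(\suppfan(U)-\eps)^{-1}$ on $\supp\psi$, so after invoking Lemma~\ref{potenzeM+} only polynomial-in-$j$ factors accumulate, and solely the exponential $(\suppfan(U)-\eps)^{-j}$ survives the $j$-th root. This is precisely what makes the lower bound $\suppfan(U)$ emerge in the limit, mirroring the role played by $R(\Lambda)^2$ in Theorem~\ref{unoequiv}.
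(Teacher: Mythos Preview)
Your proposal is correct and follows essentially the same route as the paper: the upper bound is exactly Proposition~\ref{limiteinv}, and your lower-bound argument with the test function $\psi_j=\xi^{-j}\psi$ and Lemma~\ref{potenzeM+} is precisely Proposition~\ref{inf}. One small simplification: since $|\cA^a g|=|\cA|^a|g|$, the bound on $\|\cA^a\gel\inv\psi_j\|_{L^\infty}$ already controls $\|(1+|\cA|)^a\gel\inv\psi_j\|_{L^\infty}$, so the detour through $M_-$ and $\bar\cA^a$ is unnecessary; the paper handles this more directly by applying $(1+M_+)^a$ in one stroke, using $\gel\big((1+\cA)^a f\big)=(1+M_+)^a\gel f$.
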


As in the previous section, we split the proof of our characterization into several parts.

 \begin{proposition}\label{inf}
 Let $U$ be in $\cS'_0(\cfan)$.   
Suppose that there exists  $J$ in $\N$ such that
  for every 
 $j\geq J$ the distribution $ L^j \gel^{-1} U$ is of the form $f_j\,\haar $, where $f_j$ is 
 a locally integrable function on~$\Hn$. Then for every $N$ in $\N$
and every $p$ in $[1,\infty]$
  $$
 \liminf_{j\to\infty} 
\|\,(1+\cA)^{-N}\,f_j\,\,\|_{L^p(\Hn)}^{1/j}
\geq \suppfan(U).
 $$
 \end{proposition}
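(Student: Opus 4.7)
The strategy mirrors Proposition~\ref{due} with the two sides of the spherical transform interchanged. Suppose, for contradiction, that the conclusion fails: there exist $N\in \N$, $p\in[1,\infty]$ and $\rho_0 < \suppfan(U)$ such that $\|(1+\cA)^{-N} f_j\|_{L^p(\Hn)} \le \rho_0^j$ along a subsequence of indices $j\to\infty$. Since $\suppfan(U) > \rho_0$, one can choose $\psi \in \cD(\R^2)$ with $\dualR{U}{\psi}\neq 0$ and $\supp\psi \subset \{(\xi,\la)\in\R^2 : |\xi| \ge \rho_0 + \delta\}$ for some $\delta>0$. The aim is to show that, along this subsequence, the fixed nonzero number $|\dualR{U}{\psi}|$ is bounded above by a quantity tending to $0$, producing the contradiction.

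Because $\supp\psi$ is bounded away from $\{\xi=0\}$, $\xi^{-j}\psi$ lies in $\cD(\R^2)$, and by Theorem~\ref{nostro} the function $h_j := \gel^{-1}\bigl((\xi^{-j}\psi)_{|_\cfan}\bigr)$ belongs to $\schwkrad$. Using $\gel(L^j\gel^{-1}U)=\xi^j U$ together with the definition of the spherical transform on tempered distributions,
$$
\dualR{U}{\psi} = \dualR{\xi^j U}{\xi^{-j}\psi} = \dualR{\gel(f_j\haar)}{\xi^{-j}\psi} = \dualH{f_j\haar}{\check h_j} = \int_{\Hn} f_j(x)\,h_j(x^{-1})\,dx.
$$
Factoring $1 = (1+\cA)^{-N}(1+\cA)^N$ and applying H\"older's inequality gives
$$
|\dualR{U}{\psi}| \le \|(1+\cA)^{-N} f_j\|_{L^p(\Hn)}\,\|(1+\cA)^N\check h_j\|_{L^{p'}(\Hn)},
$$
and the change of variable $x\mapsto x^{-1}$, combined with $\cA(x^{-1}) = \overline{\cA(x)}$, shows that the second factor is dominated by $\|(1+|\cA|)^N h_j\|_{L^{p'}(\Hn)}$.

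It remains to establish $\|(1+|\cA|)^N h_j\|_{L^{p'}(\Hn)} \le C_\psi\,(1+j)^K\,(\rho_0+\delta)^{-j}$. The inversion formula and $|\Phi_{\xi,\la}|\le 1$ on $\cfan$ give $\|g\|_{L^\infty(\Hn)}\le \|\gel g\|_{L^1(\mu)}$ for any $g\in\schwkrad$; combined with $\gel(\cA^a\bar\cA^b h_j) = (-1)^b M_+^a M_-^b\bigl((\xi^{-j}\psi)_{|_\cfan}\bigr)$, an iterated, mixed version of Lemma~\ref{potenzeM+}, and the elementary bound $|\partial_\xi^r(\xi^{-j})|\le c_r\,j^r\,(\rho_0+\delta)^{-j-r}$ on $\supp\psi$, this yields
$$
\|\cA^a\bar\cA^b h_j\|_{L^\infty(\Hn)} \le C_{\psi,a,b}\,(1+j)^{2(a+b)}\,(\rho_0+\delta)^{-j}.
$$
Because $|\cA(x)| = |(z,t)|^2$, taking $a=b$ large enough upgrades this to the pointwise decay $(1+|(z,t)|)^M (1+|\cA|)^N|h_j(x)| \le C_{\psi,M,N}\,(1+j)^{K'}(\rho_0+\delta)^{-j}$, from which the required $L^{p'}$ bound follows by integrating against the Schwartz tail $(1+|(z,t)|)^{-M}$ for $M$ large. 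Substituting back, $|\dualR{U}{\psi}| \le C\,(1+j)^K\bigl(\rho_0/(\rho_0+\delta)\bigr)^j \to 0$ along the subsequence, contradicting $\dualR{U}{\psi}\neq 0$; the case $\suppfan(U)=\infty$ follows by letting $\rho_0$ be arbitrarily large. The main obstacle is precisely this weighted $L^{p'}$ estimate on $h_j$: one must carry the separation $|\xi|\ge \rho_0+\delta$ on $\supp\psi$ across the correspondence $\cA\leftrightarrow M_\pm$, while simultaneously tracking the polynomial-in-$j$ growth produced by differentiating $\xi^{-j}$ and generating enough decay in $(z,t)$ to pass from $L^\infty$ to $L^{p'}$.
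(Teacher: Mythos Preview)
Your proposal is correct and follows essentially the same route as the paper: choose a test function $\psi$ supported where $|\xi|$ is bounded below, set $\psi_j=\xi^{-j}\psi$, pair $U$ with $\psi$ via $\dualR{U}{\psi}=\dualR{\xi^jU}{\psi_j}$, apply H\"older, and control $\|(1+\cA)^N\gel^{-1}\psi_j\|_{L^{p'}}$ through the operators $M_\pm$ and Lemma~\ref{potenzeM+}. The only difference is in this last estimate: the paper writes $(1+\cA)^N=(1+\cA)^{N-a}(1+\cA)^a$ with $a$ large, bounds $\|(1+\cA)^{N-a}\|_{L^{p'}(\Hn)}$ directly, and uses $\|(1+\cA)^a\gel^{-1}\psi_j\|_{L^\infty}\le\|(1+M_+)^a\psi_j\|_{L^1(\cfan)}$, which avoids your detour through mixed powers $\cA^a\bar\cA^b$ and the pointwise decay upgrade.
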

 
 \begin{proof} For the 
 reader's convenience we write the proof although it  
  follows the lines of that of Proposition~\ref{due}.
We may suppose that  $\suppfan(U)$ is positive, because
in the case where $\suppfan(U)=0$, there is nothing to prove.
 
   Let  $\|\,(1+\cA)^{-N}\, f_j\,\,\|_{L^p(\Hn)}<\infty$.
 Suppose that $0<\eps < \suppfan(U)/2$ and let  $\psi$ be smooth function on $\R^2$
  with compact support in the set 
$$
\{(\xi,\la)\in \R^2\, :\, \suppfan(U)-\eps<\xi< \suppfan(U)+\eps \}
$$
 such that
$\dualR{U}{\psi}\neq 0$. 
For every nonnegative integer $j$, 
define a smooth function on $\R^2$
  with compact support by 
  $\psi_j(\xi,\la)=\xi^{-j}\, \psi(\xi,\la)$ for every $(\xi,\la)$ in $\R^2$.
  Then for $1\leq p\leq \infty$,
  \begin{align*}
|\dualR{U}{\psi}|
&=
|\dualR{\xi^j\,U}{ \psi_j}|
=
|\dualH{ {L^j \gel^{-1}U}}{(\cG\inv {\psi_j}_{|_\Sigma})\check{\phantom a}}|
\\
&\leq \|(1+\cA)^{-N}f_j\|_{L^p(\Hn)}\, \|(1+\cA)^{N}\cG\inv {\psi_j}_{|_\Sigma}
\|_{L^{p'}(\Hn)}
\end{align*}

Let $a$ be a positive integer such that 
$\|(1+\cA)^{N-a} 
\|_{L^{p'}(\Hn)}<\infty$, then by Lemma~\ref{potenzeM+}

  \begin{align*}
  \|(1+\cA)^{N}\,\cG\inv {\psi_j}_{|_\Sigma}
\|_{L^{p'}(\Hn)}
 & \leq
\|(1+\cA)^{N-a} 
\|_{L^{p'}(\Hn)} 
\| (1+M_+ )^a\,  {\psi_j} 
\|_{L^1(\cfan)}
\\ 
& \leq C_a
\,\big(\suppfan(U)+\eps\big)^{a}\, 
\sum_{s,r=1}^{2a}\|\partial_\la^s\partial_\xi^r \psi_j \|_{L^\infty(\R^2)} 
\\ & \leq 
\,C_a
\,\big(\suppfan(U)+\eps\big)^{a}\, \, j^{2a}\,\big(\suppfan(U)-\eps\big)^{-j}.
\end{align*}
  Therefore
  $$
  \|(1+\cA)^{-N}\,f_j\|_{L^p(\Hn)}
\geq 
 {|\dualR{U}{\psi}|}  \, C_{a,\eps}\,j^{-2a}\,\big(\suppfan(U)-\eps\big)^{j}
   $$
   and the thesis follows.
  Similar considerations can be used in the case where $\rho(U)=\infty$. 
 \end{proof}

 \begin{proposition}\label{limiteinv}
Let $U$ be in  $\cS'_0(\cfan)$ with $\suppfan(U)<\infty$. 
  Then    $\gel\inv U$
  coincides with a smooth slowly growing  function 
  $f$ on $\Hn$ and for every $p$ in $[1,\infty]$ there exists $h>0$ such that
$$
\limsup_{j\to\infty}
\|\,(1+\cA)^{-h}\,L^j f\,\,\|_{L^p(\Hn)}^{1/j}
 \leq \suppfan(U) .
$$
\end{proposition}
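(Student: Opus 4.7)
The plan is to derive the statement directly from Proposition~\ref{invtemp}. That proposition already tells us that $\gel^{-1}U$ is represented by the smooth function $f=f_U$, that $f$ together with all its derivatives is slowly growing on $\Hn$, and most importantly that for every $\rho>\suppfan(U)$ there exist constants $C_{\rho}$ and $M_{\rho}$ with
\begin{equation*}
|L^j f(z,t)|\leq C_{\rho}\,(1+j)^k\,\rho^j\,(1+|(z,t)|)^{M_{\rho}}
\qquad \forall (z,t)\in\Hn,\ j\geq 0,
\end{equation*}
where $k$ is the order of $U$. The first step of the argument is therefore simply to invoke this estimate; no further fact about $U$ will be needed.

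The second step is to convert the polynomial factor in $(z,t)$ into a power of $1+|\cA|$. Since $|\cA(z,t)|=|(z,t)|^2$ and $|1+\cA|$ is comparable to $1+|\cA|$, one has $(1+|(z,t)|)^{M_{\rho}}\leq C\,|1+\cA|^{M_{\rho}/2}$. Fixing $p\in[1,\infty]$, I then choose the weight exponent $h=\beta$ large enough in terms of $M_{\rho}$ and $p$ so that $|1+\cA|^{M_{\rho}/2-h}$ belongs to $L^p(\Hn)$; multiplying the pointwise inequality by $|1+\cA|^{-h}$ and integrating yields
\begin{equation*}
\|(1+\cA)^{-h}\,L^j f\|_{L^p(\Hn)}\leq C'_{\rho,h}\,(1+j)^k\,\rho^j .
\end{equation*}
The third step is routine: take the $j$-th root, let $j\to\infty$ to bound the $\limsup$ by $\rho$, and then let $\rho\downarrow\suppfan(U)$ to conclude.

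The one delicate point, and hence the main technical obstacle, is that both $C_\rho$ and the exponent $M_\rho$ coming from Proposition~\ref{invtemp} depend on $\rho$, while the statement requires a single $\beta=h$ that works uniformly as $\rho$ shrinks towards $\suppfan(U)$. To remove this dependence I would pre-select an auxiliary radius $\rho_0>\suppfan(U)$ and re-examine the construction inside the proof of Proposition~\ref{invtemp}: the cutoff $\psi$ and the associated radial Schwartz function $u$ used there can be chosen once with $\psi\equiv 1$ on $B_{\rho_0}$, and then the same $\psi$, the same $u$, and the same exponent $M=M_{\rho_0}$ will serve for every $\rho\in(\suppfan(U),\rho_0]$. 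With $M$ thus frozen, $\beta$ is fixed once and for all as a function of $M$ and $p$, and the limit $\rho\downarrow\suppfan(U)$ in the final step proceeds without change.
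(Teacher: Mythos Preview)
Your proposal is correct and follows essentially the same route as the paper: invoke Proposition~\ref{invtemp} for the pointwise estimate on $L^j f$, convert the polynomial growth into a power of $|1+\cA|$, choose $h$ so that the residual weight lies in $L^p(\Hn)$, and then let $\rho\downarrow\suppfan(U)$. The ``delicate point'' you single out---that the exponent $M$ furnished by Proposition~\ref{invtemp} must not depend on $\rho$---is handled in the paper only implicitly (the authors fix $h$ in terms of $M$ and then quantify ``for every $\rho>\rho(U)$'' without comment); your explicit fix via a pre-chosen $\rho_0$ and a single cutoff $\psi$ is exactly how one justifies this uniformity, and it is a welcome clarification rather than a departure from the paper's argument.
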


  \begin{proof}
  Since $\suppfan(U)<\infty$, the distribution~$U$ is compactly supported
  and therefore  $\gel\inv U$
  coincides with the smooth function $f_U$ on $\Hn$ 
  by Lemma~\ref{oloinv} and $f_U$ is slowly growing by Proposition~\ref{invtemp}. 
 Moreover, the estimate \pref{e:Lj} holds: if $\rho>\suppfan(U)$ 
and $k$ is the degree of $U$, 
there exists $M$ such that for all $j\geq 0$ 
$$
 \left|
     L^j f(z,t)
     \right|
     \leq C \,(1+j)^k\, \rho^j\, \big(1+|(z,t)|\big)^{M}.
$$
Let $p$ in $[1,\infty]$ be fixed and choose $h$ such that $(1+\cA)^{-h+M/2}$ is in $L^p(\Hn)$.
Then for every $\rho>\rho(U)$ 
\begin{align*}
\|\,(1+\cA)^{-h}\,L^j f\,\,\|_{L^p(\Hn)}
&\leq \|\,(1+\cA)^{-h+M/2}\,\|_{L^p(\Hn)} \, \|\,(1+\cA)^{-M/2}\,L^j f\,\,\|_{L^\infty(\Hn)}
\\
&\leq C\,(1+j)^k\, \rho^j
\end{align*}
so that 
$
\limsup_{j\to\infty}
\|\,(1+\cA)^{-h}\,L^j f\,\,\|_{L^p(\Hn)}^{1/j}
 \leq \rho 
$, for every  $\rho>\suppfan(U)$.
\end{proof}
 
\subsection{Square-integrable functions and Schwartz functions }

Reasoning as in the proof of Theorem~\ref{teoL2} and Corollary~\ref{L2} it easy to prove the following characterization for square-integrable functions.

\begin{theorem}\label{L2inv}
Let $\rho\geq 0$. Then  
$\gel$
is a bijection from the space 
 $$\{f\in\Lduerad 
\,\,: 
\lim_{j\to\infty} 
\|\,\,L^j f\,\,\|_{L^2(\Hn)}^{1/j}
\leq \rho
\}$$
onto the space $$\{F\in L^2(\cfan)
\,\,: \rho(F)
\leq \rho
\}\ .$$ 
\end{theorem}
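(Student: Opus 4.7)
The plan is to mirror exactly the strategy used in Theorem~\ref{teoL2} and Corollary~\ref{L2}, but with the roles of the two sides interchanged: here $L$ plays the role that $\cA$ played there, and the bound is on the support of $\gel f$ on $\cfan$ rather than on the support of $f$ on $\Hn$. The key tool is the Plancherel isometry $\gel\colon\Lduerad\to L^2(\cfan,\mu)$ together with the spectral identity $\gel(L^j f)=\xi^j\gel f$, which follows from the fact that spherical functions are eigenfunctions of $L$ with eigenvalue $\xi$ (valid on $\schwkrad$ and extended to $L^2$ by density, provided $L^j f\in L^2$).

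For the forward inclusion, suppose $F=\gel f$ satisfies $\suppfan(F)\leq\rho$. Then $F$ vanishes (a.e.~with respect to $\mu$) outside $\{(\xi,\la)\in\cfan:\xi\leq\rho\}$, and Plancherel yields
\[
\|L^j f\|_{L^2(\Hn)}^2 \;=\; \int_\cfan \xi^{2j}\,|F(\xi,\la)|^2\,d\mu(\xi,\la) \;\leq\; \rho^{2j}\,\|F\|_{L^2(\cfan)}^2.
\]
Taking $j$-th roots gives $\limsup_{j\to\infty}\|L^j f\|_{L^2(\Hn)}^{1/j}\leq\rho$, and the existence of the limit with value $\leq\rho$ follows at once from the reverse inequality below.

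For the reverse inclusion, assume $f\in\Lduerad$ and $\lim_{j\to\infty}\|L^j f\|_{L^2(\Hn)}^{1/j}\leq\rho$. Regarding $F\,\mu=\gel(f\,\haar)$ as an element of $\cS_0'(\cfan)$, we have $L^j\gel^{-1}(F\mu)=L^j f\in L^2(\Hn)$ for every $j$, so Proposition~\ref{inf} applies with $U=F\mu$, $f_j=L^j f$, $N=0$ and $p=2$, giving
\[
\suppfan(F) \;=\; \suppfan(F\mu) \;\leq\; \liminf_{j\to\infty}\|L^j f\|_{L^2(\Hn)}^{1/j} \;\leq\; \rho.
\]

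The two inclusions combine to show that the Plancherel isometry $\gel$ restricts to a bijection between the two spaces in the statement. I do not anticipate a genuine obstacle: the only minor point requiring care is the tacit assumption that $L^j f\in L^2(\Hn)$ for all $j$ whenever the $L^2$ growth limit is finite, which is immediate from Plancherel applied to $\xi^j F$.
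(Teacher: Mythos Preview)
Your proposal is correct and follows exactly the approach the paper indicates: the authors give no explicit proof, stating only that one reasons as in Theorem~\ref{teoL2} and Corollary~\ref{L2}, and you have carried this out by using the Plancherel identity $\gel(L^j f)=\xi^j\gel f$ for the upper bound and Proposition~\ref{inf} (the analogue of Proposition~\ref{due}) for the lower bound. The only cosmetic sharpening would be to note that in the forward direction the same estimate with $\suppfan(F)$ in place of $\rho$ gives $\limsup_j\|L^jf\|_2^{1/j}\le\suppfan(F)$, which combined with Proposition~\ref{inf} shows the limit actually exists and equals $\suppfan(F)$.
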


In the case of Schwartz functions, we obtain the following results. 
For $F$ in $\cS(\cfan)$ we denote $\suppfan(F)=\suppfan(F\mu)$, so that
$$
\suppfan(F)=\sup\{\xi\, :\,F(\xi,\la)\not= 0\,\quad\text{and}\quad (\xi,\la)\in \cfan\}.
$$
 
 \begin{proposition}\label{liminvsch}
  Let $1\leq p\leq \infty$ and let $F$ be in $\cS(\cfan)$. 
  Then for every $h\geq 0$,
 $$
 \lim_{j\to\infty} 
\|\, \left(1+\cA \right)^{h} L^j \gel\inv F\,\,\|_{L^p(\Hn)}^{1/j}
= \suppfan(F).
 $$
\end{proposition}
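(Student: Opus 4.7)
The plan is to split the identity into a lower bound $\liminf_{j\to\infty}\|(1+\cA)^{h}L^{j}\gel^{-1}F\|_{L^{p}(\Hn)}^{1/j}\geq \suppfan(F)$ and an upper bound, following the schema used in the proof of Theorem~\ref{maininv}. The lower bound is almost immediate: the pointwise inequality $|1+\cA(z,t)|^{2}=(1+|z|^{2}/4)^{2}+t^{2}\geq 1$ gives $\|(1+\cA)^{h}g\|_{L^{p}(\Hn)}\geq \|(1+\cA)^{-N}g\|_{L^{p}(\Hn)}$ for all $h,N\geq 0$, and applying Proposition~\ref{inf} to $U=F\,\mu\in\cS_{0}'(\cfan)$ (which satisfies $\suppfan(F\,\mu)=\suppfan(F)$) with $g=L^{j}\gel^{-1}F$ yields the lower bound.

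For the upper bound, set $f=\gel^{-1}F\in\schwkrad$; we may assume $\suppfan(F)<\infty$, since otherwise the lower bound already forces the limit to be $+\infty$. For every $\rho>\suppfan(F)$, I would pick a Schwartz extension $\tilde F\in\cS(\R^{2})$ of $F$ supported in $\{|\xi|\leq \rho\}$. Such an extension exists because $F$ vanishes on $\cfan\cap\{\xi>\suppfan(F)\}$, so multiplying any Schwartz extension by a smooth $\xi$-cutoff equal to $1$ on $[-\suppfan(F),\suppfan(F)]$ and $0$ outside $(-\rho,\rho)$ still restricts to $F$ on the whole fan. A Leibniz computation then produces $\sum_{r+s\leq 2H}\|\partial_{\la}^{s}\partial_{\xi}^{r}(\xi^{j}\tilde F)\|_{L^{\infty}(\R^{2})}\leq C_{F,H,\rho}\,(1+j)^{2H}\rho^{j}$, and Lemma~\ref{potenzeM+} (applied to a $\la$-truncation of $\xi^{j}\tilde F$ and passed to the limit, using the continuity of $M_{+}$ on $\cS(\R^{2})$) upgrades this to $\|(1+M_{+})^{H}(\xi^{j}F)\|_{L^{1}(\cfan,d\mu)}\leq C(1+j)^{2H}\rho^{j}$.

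Iterating \eqref{Mpm} on $\schwkrad$ furnishes the intertwining $\gel((1+\cA)^{H}L^{j}f)=(1+M_{+})^{H}(\xi^{j}F)$; since $|\Phi_{\xi,\la}|\leq 1$ on $\cfan$, the inversion formula gives $\|g\|_{L^{\infty}(\Hn)}\leq\|\gel g\|_{L^{1}(\cfan,d\mu)}$, hence $\|(1+\cA)^{H}L^{j}f\|_{L^{\infty}(\Hn)}\leq C(1+j)^{2H}\rho^{j}$. Choosing $H\geq h$ large enough that $(1+\cA)^{-(H-h)}\in L^{p}(\Hn)$ (which holds as soon as $2(H-h)p>2\dimH+2$) and applying H\"older's inequality gives $\|(1+\cA)^{h}L^{j}f\|_{L^{p}(\Hn)}\leq C(1+j)^{2H}\rho^{j}$; taking $j$-th roots and then $\rho\to\suppfan(F)^{+}$ closes the argument. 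The main technical point is the adaptation of Lemma~\ref{potenzeM+} to the non-compactly supported Schwartz function $\xi^{j}\tilde F$, handled by a standard $\la$-cutoff and limit argument.
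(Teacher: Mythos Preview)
Your proof is correct and follows essentially the same route as the paper: Proposition~\ref{inf} for the lower bound, and for the upper bound the chain $L^{p}\to L^{\infty}$ via H\"older with a weight $(1+\cA)^{-\gamma}$, then $\|(1+\cA)^{H}L^{j}f\|_{L^{\infty}(\Hn)}\le\|(1+M_{+})^{H}(\xi^{j}F)\|_{L^{1}(\cfan)}$ via the inversion formula and~\eqref{Mpm}, then Lemma~\ref{potenzeM+}. Your treatment is in fact slightly more careful than the paper's in two respects---the explicit choice of a Schwartz extension with compact $\xi$-support, and the $\la$-truncation needed to pass from $\cD(\R^{2})$ to $\cS(\R^{2})$ in Lemma~\ref{potenzeM+}---both of which the paper leaves implicit.
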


 \begin{proof} Suppose $0<\suppfan(F)<\infty$.
 If $\gamma>0$ is big enough so that 
 $ \left(1+\cA \right)^{-\gamma}$ is in $L^p(\Hn)$ by Lemma~\ref{potenzeM+}  we obtain
   \begin{align*}
  \|\,\left(1+\cA \right)^{h} L^j \gel\inv F\,\,\|_{L^p(\Hn)}
  & \leq 
   \|\, \left(1+\cA \right)^{-\gamma}\,\|_{L^p(\Hn)}
   \|\, \left(1+\cA \right)^{h+ \gamma}\,L^j \gel\inv F\,\|_{L^\infty(\Hn)}
    \\
   & \leq  C\,
    \|\,  \left(1+M_+ \right)^{h+ \gamma}\big( \xi^j\, F\big)\|_{L^1(\cfan)}
\\
   & \leq
      C\, j^{2h+2 \gamma}\left(\suppfan(F)\right)^j.
\end{align*}
Hence 
$$
 \limsup_{j\to\infty} 
\|\, \left(1+\cA \right)^{h}\, L^j \gel\inv F\,\,\|_{L^p(\Hn)}^{1/j}
\leq \suppfan(F).
 $$
 and the thesis follows from Propostition~\ref{inf}.
  The cases $\suppfan(F)=0,\infty$ are trivial. 
 \end{proof}

\begin{theorem}\label{schinv}
Let $F$ be in $\cS(\cfan)$. The following conditions are equivalent.
\begin{enumerate}

\item  $\rho(F)$ is finite;

\item  for every $h\geq 0$ and every $p$ in $[1,\infty]$,
 $\limsup_{j\to\infty} 
\|\,\cA^h\,L^j\gel\inv F\,\,\|_{L^p(\Hn)}^{1/j}
$ is finite;

\item  there exists  $p$ in $[1,\infty]$ such that
 $\liminf_{j\to\infty} 
\|\,L^j\gel\inv F\,\,\|_{L^p(\Hn)}^{1/j}
$ is finite.

\end{enumerate}
Moreover, if any of these conditions is satisfied, then for every $h\geq 0$ and every $p$ in $[1,\infty]$,
$$
\lim_{j\to\infty} 
\|\, \left(1+\cA \right)^{h}\, L^j \gel\inv F\,\,\|_{L^p(\Hn)}^{1/j}
= \suppfan(F).
$$
\end{theorem}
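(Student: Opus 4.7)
The plan is to follow Theorem~\ref{sch} verbatim, substituting Proposition~\ref{inf} for Proposition~\ref{due} and Proposition~\ref{liminvsch} for Proposition~\ref{uno}. Set $f=\gel\inv F\in\schwkrad$ (well defined by Theorem~\ref{nostro}); then each $L^j f$ is a radial Schwartz function, the distribution $F\mu$ lies in $\cS_0'(\cfan)$ with $\gel\inv(F\mu)=fm$, and $\suppfan(F\mu)=\suppfan(F)$, so both of the above propositions apply.

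The implication $(2)\Rightarrow(3)$ is immediate on taking $h=0$ and restricting to a single $p$. For $(3)\Rightarrow(1)$, Proposition~\ref{inf} applied to $U=F\mu$ with $N=0$ gives
\[
\liminf_{j\to\infty}\|L^j f\|_{L^p(\Hn)}^{1/j}\geq\suppfan(F),
\]
so finiteness of the $\liminf$ forces $\suppfan(F)<\infty$. For $(1)\Rightarrow(2)$, Proposition~\ref{liminvsch} yields $\lim_j\|(1+\cA)^h L^j f\|_{L^p(\Hn)}^{1/j}=\suppfan(F)<\infty$ for every $h\geq 0$ and every $p\in[1,\infty]$. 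The elementary identity
\[
|1+\cA(z,t)|^2\;=\;1+\tfrac12|z|^2+|\cA(z,t)|^2\;\geq\;\tfrac12\bigl(1+|\cA(z,t)|\bigr)^2
\]
implies $|\cA^h|\leq(1+|\cA|)^h\leq 2^{h/2}|(1+\cA)^h|$, hence $\|\cA^h L^j f\|_p\leq 2^{h/2}\|(1+\cA)^h L^j f\|_p$, so $\limsup_j\|\cA^h L^j f\|_p^{1/j}\leq\suppfan(F)<\infty$.

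The ``Moreover'' identity is essentially the content of Proposition~\ref{liminvsch} itself: its statement gives the upper bound $\limsup_j\|(1+\cA)^h L^j f\|_p^{1/j}\leq\suppfan(F)$, while the matching lower bound follows from the trivial pointwise inequality $|(1+\cA)^h|\geq 1$ (since $|1+\cA|^2\geq 1$), which gives $\|(1+\cA)^h L^j f\|_p\geq\|L^j f\|_p$, combined once more with the $N=0$ case of Proposition~\ref{inf}.

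I do not anticipate any serious obstacle, since both main analytic inputs are already proved; the only bookkeeping item is the elementary comparison between the weights $\cA^h$ and $(1+\cA)^h$ sketched above, needed because the theorem is stated with the unweighted power $\cA^h$ in~(2) while Proposition~\ref{liminvsch} produces the shifted weight $(1+\cA)^h$.
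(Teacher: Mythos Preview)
Your proposal is correct and follows essentially the same route as the paper's proof, which simply cites Proposition~\ref{liminvsch} for $(1)\Rightarrow(2)$, calls $(2)\Rightarrow(3)$ trivial, and cites Proposition~\ref{inf} for $(3)\Rightarrow(1)$. You add the explicit weight comparison $|\cA|^h\le 2^{h/2}|1+\cA|^h$ to bridge the gap between the $\cA^h$ in condition~(2) and the $(1+\cA)^h$ in Proposition~\ref{liminvsch}, a detail the paper leaves implicit; note also that the ``Moreover'' clause is exactly the conclusion of Proposition~\ref{liminvsch}, so no separate upper/lower bound argument is needed there.
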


\begin{proof} The implication $(1)\Rightarrow (2)$ follows by Propostion~\ref{liminvsch}. The implication
  $(2)\Rightarrow (3)$ is trivial. The implication $(3)\Rightarrow (1)$ follows by Propostion~\ref{inf}.   
  \end{proof}

\end{document}